\documentclass[12pt,oneside]{amsart}
\usepackage{amsfonts}
\usepackage{amsmath}
\usepackage{amssymb}
\usepackage{mathrsfs}
\usepackage{geometry}
\usepackage{graphicx}
\usepackage{subfigure}
\usepackage{hyperref}
\usepackage{microtype}
\usepackage{enumitem}
\usepackage{geometry}
\usepackage{tikz}
\usepackage{tikz-cd}
\usepackage{color}
\numberwithin{equation}{section}
\allowdisplaybreaks[1]
\usepackage{titletoc}
\usepackage{amsthm}
\usepackage{cite}
 \usepackage{float}

\newcommand{\la}{\lambda}

\newcommand{\ga}{\gamma}

\newcommand{\vp}{\varphi}
\newcommand{\ka}{\kappa}

\newcommand{\R}{\mathbb{R}}

\newcommand{\om}{\omega}

\newcommand{\n}[1]{\Vert #1\Vert }

\newcommand{\bbn}[1]{\Big\Vert #1 \Big \Vert }
\newcommand{\lr}[1]{\left\{ #1 \right\} }
\newcommand{\lrc}[1]{\left[ #1 \right] }
\newcommand{\lrs}[1]{\left( #1 \right) }

\newcommand{\lra}[1]{\langle #1 \rangle }

\newcommand{\abs}[1]{|#1| }
\newcommand{\babs}[1]{\big | #1 \big|}
\newcommand{\bbabs}[1]{\Big | #1 \Big|}
\newcommand{\wt}[1]{\widetilde{#1}}

\newcommand{\pa}{\partial}

\begin{document}

\newtheorem{theorem}{Theorem}[section]
\newtheorem{lemma}[theorem]{Lemma}

\theoremstyle{definition}
\newtheorem{definition}[theorem]{Definition}
\newtheorem{example}[theorem]{Example}
\newtheorem{remark}[theorem]{Remark}

\numberwithin{equation}{section}

\newtheorem{proposition}[theorem]{Proposition}
\newtheorem{corollary}[theorem]{Corollary}
\newtheorem{goal}[theorem]{Goal}

\title[Ill-posedness of the hard-sphere Boltzmann equation]{Sharp $H_{x}^{s}$ Ill-posedness of the Hard-sphere Boltzmann Equation}

\author[X. Chen]{Xuwen Chen}
\address{Department of Mathematics, University of Rochester, Rochester, NY 14627, USA}
\email{xuwenmath@gmail.com}

\author[Y. Guo]{Yan Guo}
\address{Division of Applied Mathematics, Brown University, Providence, RI 02912, USA}
\email{yan\_guo@brown.edu}

\author[S. Shen]{Shunlin Shen}
\address{School of Mathematical Sciences, University of Science and Technology of China, Hefei, 230026, China}
\email{slshen@ustc.edu.cn}

\author[Z. Zhang]{Zhifei Zhang}
\address{School of Mathematical Sciences, Peking University, Beijing, 100871, China}

\email{zfzhang@math.pku.edu.cn}

\subjclass[2010]{Primary 76P05, 35Q20, 35R25; Secondary 35C05, 35B32, 82C40.}

\begin{abstract}
We investigate the ill-posedness mechanism of the hard-sphere Boltzmann equation in $H_{x}^{s}$ Sobolev space.
Via a direct construction, we prove a strong-weak type ill-posedness result in the low-regularity regime $s<1$, establishing a sharp threshold in connection to the local $s>1$ well-posedness result \cite{CDP19}. Instead of originating
from the large-velocity growth of the collision kernel, this illposedness is generated by the loss
term and dispersive effects. Consequently, we prove a dispersion-driven nonlinear instability mechanism for the hard-sphere Boltzmann equation, and provide a capstone of the ill-posedness series \cite{CH24well, CSZ24well}.
 \end{abstract}
\keywords{Boltzmann equation, Sharp ill-posedness, Hard-sphere model.}
\maketitle
\tableofcontents

\section{Introduction}

We consider the hard-sphere Boltzmann equation
\begin{equation}\label{equ:Boltzmann}
\left\{
\begin{aligned}
\left( \partial_t + v \cdot \nabla_x \right) f (t,x,v) &= Q(f,f),\\
f(0,x,v)&= f_{0}(x,v),
\end{aligned}
\right.
\end{equation}
where $f(t,x,v)$ denotes the distribution function of particles at time $t\geq 0$, position $x\in \mathbb{R}^{3}$, and velocity $v\in \mathbb{R}^{3}$. The collision operator $Q$ is split into a gain term and a loss term:
\begin{align*}
Q(f,g)&=Q^{+}(f,g)-Q^{-}(f,g),
\end{align*}
where
\begin{align*}
Q^{+}(f,g)&=\int_{\mathbb{R}^{3}}\int_{\mathbb{S}^{2}} f(v^{*})g(u^{*}) |(u-v)\cdot \omega|\,dud\omega,\\
Q^{-}(f,g)&=f(v)\int_{\mathbb{R}^{3}}\int_{\mathbb{S}^{2}} g(u) |(u-v)\cdot \omega| \,du d\omega.
\end{align*}
Here, the post-collision velocities $(v^*, u^*)$ are related to the pre-collision velocities $(v, u)$ via
\begin{align*}
u^{*}=u+\lrc{(v-u)\cdot \omega} \omega,\quad v^{*}=v-\lrc{(v-u)\cdot \omega} \omega.
\end{align*}

The hard-sphere Boltzmann equation occupies a key position in the mathematical theory of kinetic gases, provides a bridge between the deterministic description of classical Newtonian particle systems and the macroscopic fluid equations.
It holds profound historical and theoretical significance, serving as the foundational model originally formulated by Maxwell \cite{Max67} and Boltzmann \cite{Bol68} to describe non-equilibrium statistical mechanics.
Furthermore, it remains one of the few settings in which a rigorous derivation from classical particle dynamics has been established. For a comprehensive and detailed discussion, see for example the monograph \cite{Vil02}.

Given its fundamental importance, the Cauchy problem for the hard-sphere Boltzmann equation has attracted a lot of attention, resulting in advances of well-posedness theories. See, for example, \cite{AMUXY13,Ars11,CDP19,DP89,DLX16,DHWY17,DS18,Guo03,Gup03vlasov,Guo04,HJKL24,KS78,IS84,LY04,LY11,Uka74,Uka86}. The early landmark work by DiPerna and Lions \cite{DP89} established the global existence theory of renormalized weak solutions.
Nevertheless, many crucial problems concerning such weak solutions remain open to date.
On the other hand, within the framework of classical well-posedness theory, a natural and important line of research focuses on the well-posedness in $H_x^s$ Sobolev spaces with minimal regularity requirements on initial data.
Exploring well-posedness in the low regularity regime is not only of mathematical interest, but also of direct practical relevance to applications such as numerical simulations. In computational practice, numerical methods often need to handle non-smooth and low regularity data in each iteration, and requires a certain regularity of the solution map to ensure the validity of the algorithm.

The introduction of dispersive estimates has made new progress in this direction.
In fact, the Boltzmann equation shares a number of similarities with the Schr\"{o}dinger equation. Applying the inverse Fourier transform to the Boltzmann equation with respect to the velocity variable yields
\begin{align*}
i\pa_{t}\wt{f}+\nabla_{\xi}\cdot \nabla_{x}\wt{f}=i\mathcal{F}_{v\mapsto \xi}^{-1}\lrc{Q(f,f)},
\end{align*}
where the linear part becomes a hyperbolic Schr\"{o}dinger operator. Moreover, the hierarchy of the Boltzmann equation exhibits a structural similarity to the Gross-Pitaevskii hierarchy associated with the Schr\"{o}dinger equation. To date, substantial progress has been made in the study of quantum many-body hierarchies, particularly concerning space-time collapse estimates (see, e.g., \cite{CHPS15,CP10,Che12,KM08,CH16,CH19,HS19,KSS11}).
By introducing dispersive techniques developed for the study of quantum many-body hierarchy dynamics,\footnote{For further developments utilizing dispersive and harmonic analysis techniques, see, for instance, \cite{BSTW26,CDP21,CSZ23sharp,CSZ26}.}
T. Chen, Denlinger, and Pavlovi$\acute{\text{c}}$ \cite{CDP19} successfully lower the regularity for local well-posedness to $s>1$ for Maxwell
molecules and hard-sphere cases.
On the other hand, many advances have also been made in the study of ill-posedness for dispersive equations. In this direction, see for example \cite{BT06,CCT03,KPV01,MST01,MST02,Nak99}. However, for the Boltzmann equation, despite the dispersive effects present in its linear operator, the potential ill-posed mechanisms are far from obvious due to the complexity of the nonlinear collision kernel.
X. Chen and Holmer \cite{CH24well}, along with follow-up work \cite{CSZ24well}, identified the critical well/ill-posedness threshold to be exactly $s=1$ for both constant kernel and soft potential cases by constructing ill-posedness examples. Nevertheless, the ill-posedness mechanism for the physically significant hard-sphere model remains open.

This opening is not accidental but stems from the distinct mathematical structure of the hard-sphere model. In contrast to the constant kernel and soft potential cases, the hard-sphere collision kernel exhibits growth with respect to the relative velocity (i.e., the large-velocity growth behavior).
Consequently, the nonlinear terms in the equation no longer retain a purely semilinear structure but instead display features of velocity moment loss, which is widely recognized as one of the key difficulties in treating the Cauchy problem for the hard-sphere model (see for example \cite{Vil02}). Indeed,
 the hard-sphere Boltzmann equation exhibits rich and intricate mathematical structures, including the dispersive effects of the linear transport operator, the large-velocity growth induced by the collision kernel, and the nonlocal and asymmetric nature of the gain and loss terms.

The main purpose of this
paper is to explore the interplay of these factors and find the ill-poseness mechanism in $H_{x}^{s}$ Sobolev space (if there is any), and address the problem of finding the well/ill-posedness threshold of the the hard-sphere Boltzmann equation.

\begin{theorem}\label{thm:main theorem}
The hard-sphere Boltzmann equation is ill-posed in $e^{\lra{v}^{2}}L_{v}^{2}H_{x}^{s_{0}}$ for $s_{0}\in [0,1)$,
 in the sense that
 the data-to-solution map is not uniformly continuous.
More specifically, for each $M\gg1 $, there exist a time sequence
$\left\{T_{*}^{M}\right\} _{M}$ satisfying
	$$T_{*}^{M}>0, \quad \lim_{M\to \infty}T_{*}^{M}=0,$$
and two solutions $f^{M}(t)$,  $g^{M}(t)$ in $[0,T_{*}^{M}]$ with
\begin{align*}
\n{e^{\lra{v}^{2}}f^{M}(0)}_{L_{v}^{2}H_{x}^{s_{0}}}\sim \n{e^{\lra{v}^{2}}g^{M}(0)}_{L_{v}^{2}H_{x}^{s_{0}}} \sim 1.
\end{align*}
These solutions are initially close at $t=0$ in the Gaussian-weighted (strong) norm
	\begin{equation*}
		\n{e^{\lra{v}^{2}}\lrs{f^{M}(0)-g^{M}(0)}}_{L_{v}^{2}H_{x}^{s_{0}}}\lesssim \frac{1}{\ln M},
	\end{equation*}
	but become fully separated at $t=T_{*}^{M}$ in the unweighted (weak) norm
	\begin{equation*}
		\n{f^{M}(T_{*}^{M})-g^{M}(T_{*}^{M})}_{L_{v}^{2}H_{x}^{s_{0}}}\sim 1.
	\end{equation*}
	\end{theorem}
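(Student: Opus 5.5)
The plan is to build $f^M$ and $g^M$ with the same two-component structure: a large, spatially rough, concentrated profile, plus a small background. The loss term $Q^-$ then acts on the rough piece as a multiplicative damping whose strength depends on the background density. Choosing two backgrounds that differ only slightly in the strong norm produces different damping rates, and hence separation.

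\textbf{Step 1: the rough piece.} Following the strategy of \cite{CH24well, CSZ24well}, take initial data of the form
\[
f^M(0)=\phi^M(x,v)+h_0(x,v), \qquad g^M(0)=\phi^M(x,v),
\]
with the following choices.
\begin{itemize}
\item $\phi^M$ is a Gaussian-in-velocity, highly oscillatory/concentrated profile at spatial frequency $\sim M$ (or spatial scale $M^{-1}$). Its amplitude is normalized so that $\n{e^{\lra{v}^2}\phi^M}_{L^2_vH^{s_0}_x}\sim 1$.
\item $h_0$ is a smaller perturbation with $\n{e^{\lra{v}^2}h_0}_{L^2_vH^{s_0}_x}\lesssim 1/\ln M$.
\end{itemize}

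\textbf{Step 2: the damping mechanism.} The key point is dispersion. Under free transport, a profile concentrated at scale $M^{-1}$ in $x$ with velocity spread $O(1)$ spreads out over times $t\gtrsim M^{-1}$. Meanwhile its $x$-density $\rho(t,x)=\int f\,dv$ stays large on a short window. Because $s_0<1$, one can arrange $\int_0^{T}\n{\rho}_{L^\infty_x}dt\sim \ln M$ (a log-divergent time integral of the density over $[M^{-1},1]$-type windows), while the $H^{s_0}$ norm stays $O(1)$. The loss term reads
\[
Q^-(f,f)=f\cdot(|\cdot|*\rho_f).
\]
It is controlled because velocities are Gaussian-localized; this is where the weight $e^{\lra{v}^2}$ is used to neutralize the large-velocity growth. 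The loss term therefore produces a factor
\[
\exp\Big(-\int_0^t a(s,x,v)\,ds\Big)
\]
along characteristics.

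\textbf{Step 3: choice of $h_0$ and separation.} Choose $h_0$ so that it changes the accumulated damping $\int a$ by an amount of order $1$. A perturbation of size $1/\ln M$, integrated against the logarithmically divergent weight, contributes $O(1)$ to the exponent. At time $T_*^M$, which tends to $0$ (for instance $T_*^M\sim(\ln M)^{-1}$ or a power of $M^{-1}$), the two solutions then carry multiplicative factors $e^{-c_1}$ versus $e^{-c_2}$ on the main piece, with $c_1\neq c_2$. This gives separation $\sim 1$ in the unweighted $L^2_vH^{s_0}_x$ norm. The weight is lost because the damping factor depends on $v$ through $|u-v|$, which prevents control in the weighted norm. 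This accounts for the strong-weak form of the statement.

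\textbf{Step 4: rigorous justification.} Write each solution as an explicit approximate solution (free transport with the loss-damping factor) plus a remainder. Treat the gain term $Q^+$ and the remaining parts of $Q^-$ as perturbations. Bound them in $L^2_vH^{s_0}_x$ using the dispersive/Strichartz-type or space-time estimates of \cite{CDP19} at the lower regularity. The aim is a remainder bound of size $o(1)$ on $[0,T_*^M]$, obtained by a bootstrap, or by working in a slightly higher regularity class where local existence holds on the short interval.

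\textbf{Main obstacle.} I expect the hardest step to be controlling $Q^+$: showing that the gain term does not undo the separation. The hard-sphere kernel's velocity growth, combined with gain-term smoothing, must yield a contribution of size $o(1)$. I would try to exploit the fact that $Q^+$ of two concentrated profiles is smoother in $x$ (an averaging effect), so its $H^{s_0}$ norm gains a power of $M$ against the $\ln M$ losses.
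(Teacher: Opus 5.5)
\textbf{There is a genuine gap: the background perturbation that drives the separation is never constructed, and the heuristic offered for it is incorrect.} Your outer structure matches the paper's: $f^M(0)=\phi^M+h_0$ and $g^M(0)=\phi^M$, with $Q^-$ damping the principal piece, versus $f(0)=f_{\mathrm r}(0)+f_{\mathrm b}(0)$ and $g(0)=f_{\mathrm r}(0)$.

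\textbf{The damping mechanism.} The extra damping that the background $h$ exerts on $\phi^M$ is $\exp(-\int_0^t A[h]\,d\tau)$. It is governed pointwise by the velocity-averaged density of $h$ on the support of $\phi^M$. The density of $\phi^M$ plays no role in it, so ``a perturbation of size $1/\ln M$ integrated against a log-divergent weight'' is not an estimate. The claimed log-divergence is also false. For $\phi^M=M^{3/2-s_0}\chi(Mx)G(v)$ one has $\|\rho(t)\|_{L^\infty_x}\sim M^{3/2-s_0}\min\{1,(Mt)^{-3}\}$, hence $\int_0^T\|\rho\|_{L^\infty_x}\,dt\sim M^{1/2-s_0}$. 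This is large only below the scaling index $1/2$ and has nothing to do with the threshold $1$.

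\textbf{What is actually needed.} You need a background that is small in $e^{\langle v\rangle^2}L^2_vH^{s_0}_x$ but has density $\gg 1/T_*$ at the location of the principal piece throughout $[0,T_*]$. This is a quantitative failure, for $s<1$, of bounding $A[h]$ pointwise by $\|h\|_{L^2_vH^{s}_x}$. The paper builds it as $f_{\mathrm b}=M^{1-s}\sum_{i,j}K_{i,j}(x-vt)I_{i,j}(v)$:
\begin{itemize}
\item about $M^2$ tubes of width $M^{-1}$ pass through the origin along the nearly uniformly spaced directions $e_{i,j}$;
\item the velocity supports $I_{i,j}$ are pairwise disjoint and aligned with the tube axes, so free transport slides each tube along itself and the origin stays covered for $t\le 1/4$;
\item this gives $\|e^{\langle v\rangle^2}f_{\mathrm b}\|_{L^2_vH^{s_0}_x}\lesssim M^{s_0-s}$ while $A[f_{\mathrm b}]\sim M^{1-s}$ for $|x|\le(100M)^{-1}$.
\end{itemize}
Equally essential, the principal piece $f_{\mathrm r}(0)=M^{3/2-s_0}N^{3/2}\chi(100Mx)\chi(Nv)$, with $N=M^{10}$, has velocities $\le N^{-1}$. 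It never leaves the crossing point, its transport and self-interaction are negligible, and $g\approx f_{\mathrm r}(0)$ is essentially stationary. Your Gaussian-in-velocity $\phi^M$ leaves the $M^{-1}$-ball in time $\sim M^{-1}$, so this long damping window at a single point is lost. With rate $M^{1-s}$ and $T_*=M^{s-1}\ln\ln M$, the paper gets full deflation, $\|f_{\mathrm r}(T_*)\|\lesssim 1/\ln M$.

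\textbf{Step 4 (control of the remainder) is not workable as stated.} At $s_0<1$ the estimates of \cite{CDP19} are unavailable. For the hard-sphere kernel, no fixed-weight bound $\|Q^\pm(f,g)\|_Z\lesssim\|f\|_Z\|g\|_Z$ holds, because a factor $\langle v\rangle$ is lost. Simply moving to higher regularity gives a lifespan far shorter than $T_*$, because the approximate solution's norms grow in $M$. The paper closes the argument with three devices:
\begin{enumerate}
\item[(i)] A time-decaying weight $e^{(1-\kappa t)\langle v\rangle^2}$, using $e^{-\kappa(t-\tau)\langle v\rangle^2}\lesssim(\kappa(t-\tau))^{-1/2}\langle v\rangle^{-1}$ to pay for the lost $\langle v\rangle$. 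This yields the factor $((t-t_0)/\kappa)^{1/2}$ in the space-time collision estimate.
\item[(ii)] The $M$-adapted norm $\|f\|_Z=M^{s_0}\|f\|_{L^{2,1}_vL^2_x}+M^{s_0-1}\|\nabla_xf\|_{L^{2,1}_vL^2_x}+\|f\|_{L^{1,1}_vL^\infty_x}+M^{-1}\|\nabla_xf\|_{L^{1,1}_vL^\infty_x}$, in which $\|e^{\langle v\rangle^2}f_{\mathrm a}\|_Z\lesssim M^{1-s}$.
\item[(iii)] The choice $\kappa=1/T_*$ and an iteration over $\sqrt{\ln M}\,\ln\ln M$ subintervals of length $M^{s-1}/\sqrt{\ln M}$. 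The resulting $2^n$ growth is beaten by the error bound $M^{-\delta}$.
\end{enumerate}
The identity $\kappa T_*=1$ is the real reason the separation holds in the unweighted norm at $T_*$. It is not that the damping factor depends on $v$ through $|u-v|$: the approximate solutions are compactly supported in $v$, so the weight is harmless for them.

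\textbf{The gain term.} You are right that $Q^+$ is the hardest error term, but the difficulty is the supercritical $L^{1,1}_vL^\infty_x$ component of $\int_{t_0}^t e^{-(t-\tau)v\cdot\nabla_x}Q^+(f_{\mathrm b},f_{\mathrm b})\,d\tau$. Pulling out the time integral there loses $M^{1-s}\ln\ln M$. The paper recovers decay by integrating in time along the tubes, decomposing in the angle $\omega$, and counting $\sum_{(i_1,j_1)\ne(i_2,j_2)}|e_{i_1,j_1}-e_{i_2,j_2}|^{-1}\lesssim M^4\ln M$. This gives $M^{3/2-2s}\ln M$, which forces $s>3/4$. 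An $x$-smoothing gain for $Q^+$ in $H^{s_0}$ does not address this.
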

The explicit construction of two solutions in Theorem \ref{thm:main theorem} indicates that the origin of its ill-posedness is not the large-velocity growth of the nonlinear term. On the contrary, this ill-posedness is jointly driven by the structure of the loss term and dispersive effects. Therefore, our proof provides a dispersion-driven nonlinear instability
mechanism for the hard-sphere Boltzmann equation.

\begin{remark}
 The validity of the local well-posedness result in the Gaussian-weighted space for the regularity $s>1$ has been proven in \cite{CDP19}. This implies the sharpness for the ill-posedness result, and proves the $s=1$ well/ill-posedness regularity threshold.
 \end{remark}
 \begin{remark}
This is a very strong ill-posedness result. The initial data are arbitrarily close in a strong weighted space, yet they become fully separated in a weak unweighted space, thereby exhibiting a strong-weak type discontinuity.
Phenomena of strong-weak ill-posedness have previously been observed in compressible fluid flows. See, for example, Guo and Tice \cite{GT11}. However, the methods employed therein do not currently appear to overlap with those of the present paper.
Ill-posedness for fluid equations, in the sense of losing uniform continuity of the solution map regarding critical regularity spaces, has also been deeply explored.
For instance, see Bourgain and Pavlovi$\acute{\text{c}}$ \cite{BP08} for the Navier-Stokes equations and Bourgain and Li \cite{BL15,BL15G,BL21} for the incompressible Euler equations.
Despite these macroscopic achievements, ill-posedness results for the Boltzmann equation are remarkably scarce. To the best of our knowledge, Theorem \ref{thm:main theorem} provides the first instance of such a strong-weak instability being established for the Boltzmann equation. A natural problem is whether we can relate this instability to the corresponding phenomena in fluid equations.
\end{remark}

\begin{remark}
Based on the scaling analysis in the $H_{x}^{s}$ space, the scaling-critical index of the Boltzmann equation corresponds to $s=\frac{1}{2}$. This implies that the threshold established in Theorem \ref{thm:main theorem} lies strictly above the value suggested by the scaling argument. Analogous phenomena in the context of dispersive equations, particularly concerning Sobolev indices above the scaling-critical value, have been extensively studied in the work \cite{KPV01} of Kenig, Ponce, and Vega.
Although the linear part of the Boltzmann equation exhibits dispersive effects, the mechanism driving the ill-posedness here is entirely different from that of classical dispersive equations. One primary distinction lies in the time reversibility of the evolution. The intrinsic irreversibility of the hard-sphere Boltzmann equation dictates that its dynamics evolve strictly in a single time direction. In sharp contrast, the time evolution of the Schr\"{o}dinger equation is time-reversible (bidirectional).
Indeed, the ill-posedness mechanism of Theorem \ref{thm:main theorem} is induced by the specific structure of the Boltzmann collision kernel, leading to a norm deflation phenomenon rather than the usual norm inflation.
\end{remark}

\begin{remark}
The proof of Theorem \ref{thm:main theorem} also applies to the case of a constant kernel, yielding a stronger result than that in \cite{CH24well}. However, owing to the essential difference in the structure of the collision kernels, the new counterexample in Theorem \ref{thm:main theorem} differs substantially from the case of soft potential \cite{CSZ24well}, and therefore cannot be directly applied to the latter to yield a strong-weak ill-posedness result.
\end{remark}

\subsection{Outline of the Proof}\label{section:Outline of the Proof}

The origin of the ill-posedness behavior described in \cite{CH24well} is the discrepancy of the dispersive bilinear estimates of the gain term and the loss term. Using the maximizer of the loss term bilinear estimates in the case of a constant
collision kernel, the nonlinear equation effectively reduces to the simplified model
\begin{align}\label{equ:loss term,intro}
\partial_{t}f=-Q^{-}(f,f)\sim -f.
\end{align}
\eqref{equ:loss term,intro} implies a sharp deflation behavior on the solution. Such heuristic analysis indicates the underlying mechanism is actually from harmonic analysis based dispersive equation techniques (maxmizer construction of bilinear estimates) and the nonlinear structure of the Boltzmann equation.

However, in the hard-sphere case, the proof of the ill-posedness mechanism is more like a strong competition between the harmonic analysis techniques and the inherent large-velocity growth of the collision kernel.

We begin the ill-posedness construction of \eqref{equ:Boltzmann} by considering two exact solutions, and decompose them into an approximate part (ansatz) and a correction term as follows
\begin{align*}
f^{M}(t)=&f_{\mathrm{a}}^{M}(t)+f_{\mathrm{c}}^{M}(t),\\
g^{M}(t)=& g_{\mathrm{a}}^{M}(t)+g_{\mathrm{c}}^{M}(t).
\end{align*}
 In order to rigorously prove the ill-posedness result, we need to construct two sequences of solutions
such that the following five conditions are met.
\begin{enumerate}[label=(\alph*), align=left, leftmargin=*]
\item \textbf{Initialization of approximate solutions.}
The approximate solutions start with bounded weighted norm, while the corrections vanish initially. Furthermore, the distance between the two exact solutions is asymptotically negligible:
\begin{equation*}
\left\{
\begin{aligned}
&\n{e^{\lra{v}^{2}}f_{\mathrm{a}}^{M}(0)}_{L_{v}^{2}H_{x}^{s_{0}}}\sim 1,\quad \n{e^{\lra{v}^{2}}g_{\mathrm{a}}^{M}(0)}_{L_{v}^{2}H_{x}^{s_{0}}}\sim 1,\\
&\n{e^{\lra{v}^{2}}f_{\mathrm{c}}^{M}(0)}_{L_{v}^{2}H_{x}^{s_{0}}}= \n{e^{\lra{v}^{2}}g_{\mathrm{c}}^{M}(0)}_{L_{v}^{2}H_{x}^{s_{0}}}=0,\\
&\n{e^{\lra{v}^{2}}\lrs{f^{M}(0)-g^{M}(0)}}_{L_{v}^{2}H_{x}^{s_{0}}}\xrightarrow[M\to \infty]{} 0.
\end{aligned}
\right.
\end{equation*}
Additionally, the unweighted norms remain comparable to the weighted norms:
\begin{align*}
\n{f_{\mathrm{a}}^{M}(0)}_{L_{v}^{2}H_{x}^{s_{0}}}\sim \n{e^{\lra{v}^{2}}f_{\mathrm{a}}^{M}(0)}_{L_{v}^{2}H_{x}^{s_{0}}}\sim 1.
\end{align*}

\item \textbf{Norm deflation property.}
The first approximate solution $f_{\mathrm{a}}^{M}$ exhibits norm deflation property, decay to zero at the terminal time $T_{*}^{M}$:
\begin{equation*}
    \|  e^{\lra{v}^{2}}f_{\mathrm{a}}^{M}(T_{*}^{M}) \|_{L_{v}^{2}H_{x}^{s_{0}}} \xrightarrow[M\to \infty]{} 0.
\end{equation*}

\item \textbf{Stability analysis.}
The profile of the second solution, $g_{\mathrm{a}}^{M}$, remains uniformly close to the initial state of the first approximate solution, $f_{\mathrm{a}}^{M}(0)$, throughout the time interval $[0, T_{*}^{M}]$:
\begin{equation*}
    \sup_{t \in [0, T_{*}^{M}]} \| e^{\langle v \rangle^{2}} ( g_{\mathrm{a}}^{M}(t) - f_{\mathrm{a}}^{M}(0) ) \|_{L_{v}^{2}H_{x}^{s_{0}}} \xrightarrow[M\to \infty]{} 0.
\end{equation*}

\item \textbf{Uniform bounds for correction terms.}
The correction terms must remain asymptotically negligible in the $L_{v}^{2}H_{x}^{s_{0}}$ norm before time $T_{*}^{M}$:
\begin{align*}
  & \sup_{t \in [0, T_{*}^{M}]} \| f_{\mathrm{c}}^{M}(t) \|_{L_{v}^{2}H_{x}^{s_{0}}} \xrightarrow[M\to \infty]{} 0,\\
  & \sup_{t \in [0, T_{*}^{M}]} \| g_{\mathrm{c}}^{M}(t) \|_{L_{v}^{2}H_{x}^{s_{0}}} \xrightarrow[M\to \infty]{} 0.
\end{align*}

\item \textbf{Given short-time Formation.}
The separation time $T_{*}^{M}$ must vanish as the parameter $M$ increases:
\begin{equation*}
    T_{*}^{M} \xrightarrow[M\to \infty]{} 0.
\end{equation*}
\end{enumerate}

Constructing two solutions that simultaneously satisfy the five aforementioned conditions presents a nontrivial challenge. Two primary difficulties arise in this process. The first lies in capturing the ill-posed behavior of the approximate solution, specifically the norm deflation property. The second involves establishing uniform control over the correction terms up to the time when this ill-posed behavior occurs.

Let us focus on the construction of the exact solution $f^{M}=f_{\mathrm{a}}^{M}+f_{\mathrm{c}}^{M}$.
Building on the preliminaries for bilinear estimates of the loss term, we decompose the approximate solution
 $f_{\mathrm{a}}^{M}(t)$ as
\begin{align*}
f_{\mathrm{a}}^{M}(t)=f_{\mathrm{r}}^{M}(t)+f_{\mathrm{b}}^{M}(t),
\end{align*}
where $f_{\mathrm{r}}^{M}(t)$ is the principal term and $f_{\mathrm{b}}^{M}(t)$ acts as a perturbation driving the ill-posed behavior. Moreover, $f_{\mathrm{r}}^{M}(t)$ and $f_{\mathrm{b}}^{M}(t)$ are required to satisfy
\begin{align}
&\partial _{t}f_{\mathrm{b}}^{M}+v\cdot \nabla _{x}f_{\mathrm{b}}^{M}=0,\label{equ:fb,equation,intro}\\
&\partial_{t}f_{\mathrm{r}}^{M}=-Q^{-}(f_{\mathrm{r}}^{M},f_{\mathrm{b}}^{M})=-f_{\mathrm{r}}^{M}A[f_{\mathrm{b}}^{M}],\label{equ:fr,equation,intro}
\end{align}
with
\begin{align*}
 A[f_{\mathrm{b}}^{M}]=\int_{\mathbb{R}^{3}}\int_{\mathbb{S}^{2}} f_{\mathrm{b}}^{M}(u) |(u-v)\cdot \omega| \,du d\omega.
\end{align*}
Then, it is straightforward to verify that the correction term $f_{\mathrm{c}}(t)$ satisfies the nonlinear equation
\begin{equation}\label{equ:correction term,fc,intro}
\left\{
\begin{aligned} \partial_t f_{\mathrm{c}}^{M} + v\cdot \nabla_x f_{\mathrm{c}}^{M} = & \pm Q^\pm(f_{\mathrm{c}}^{M},f_{\mathrm{a}}^{M}) \pm
Q^\pm(f_{\mathrm{a}}^{M},f_{\mathrm{c}}^{M}) \pm Q^\pm(f_{\mathrm{c}}^{M},f_{\mathrm{c}
})-F_{\text{err}}^{M},\\
F_{\mathrm{err}}^{M}=&\pa_{t}f_{\mathrm{a}}^{M}+v\cdot \nabla_{x}f_{\mathrm{a}}^{M}+Q^{-}(f_{\mathrm{a}}^{M},f_{\mathrm{a}}^{M})-Q^{+}(f_{\mathrm{a}}^{M},f_{\mathrm{a}}^{M}).
 \end{aligned}
 \right.
\end{equation}

\textbf{Construction of $f_{\mathrm{b}}^{M}$ and $f_{\mathrm{r}}^{M}$.}
The key novelty of the paper is the delicate construction of $f_{\mathrm{b}}^{M}$ and $f_{\mathrm{r}}^{M}$, which ensures that the approximate solution $f_{\mathrm{a}}^{M}$ exhibits a norm deflation property, while also providing uniform control of the subsequent correction term $f_{\mathrm{c}}^{M}$.
 In particular, the construction of
$f_{\mathrm{b}}^{M}$ relies on a lattice system of nearly uniformly distributed lattice points $\lr{e_{i,j}}$ on the unit hemisphere, inspired by the variant of $l^{2}$-decoupling implemented \cite{CSZ26}.
  Specifically, on the unit sphere, we set
 \begin{align}\label{equ:3d,eij,intro}
 e_{i,j}=\lrs{\sin(\frac{\pi j}{2M}) \cos( \frac{2\pi i}{j}),\sin(\frac{\pi j}{2M})\sin(\frac{2\pi i}{j}),\cos (\frac{\pi j}{2M})},\quad 1\leq i\leq j\leq M.
 \end{align}
\begin{figure}[H]
  \centering
  \includegraphics[width=6cm]{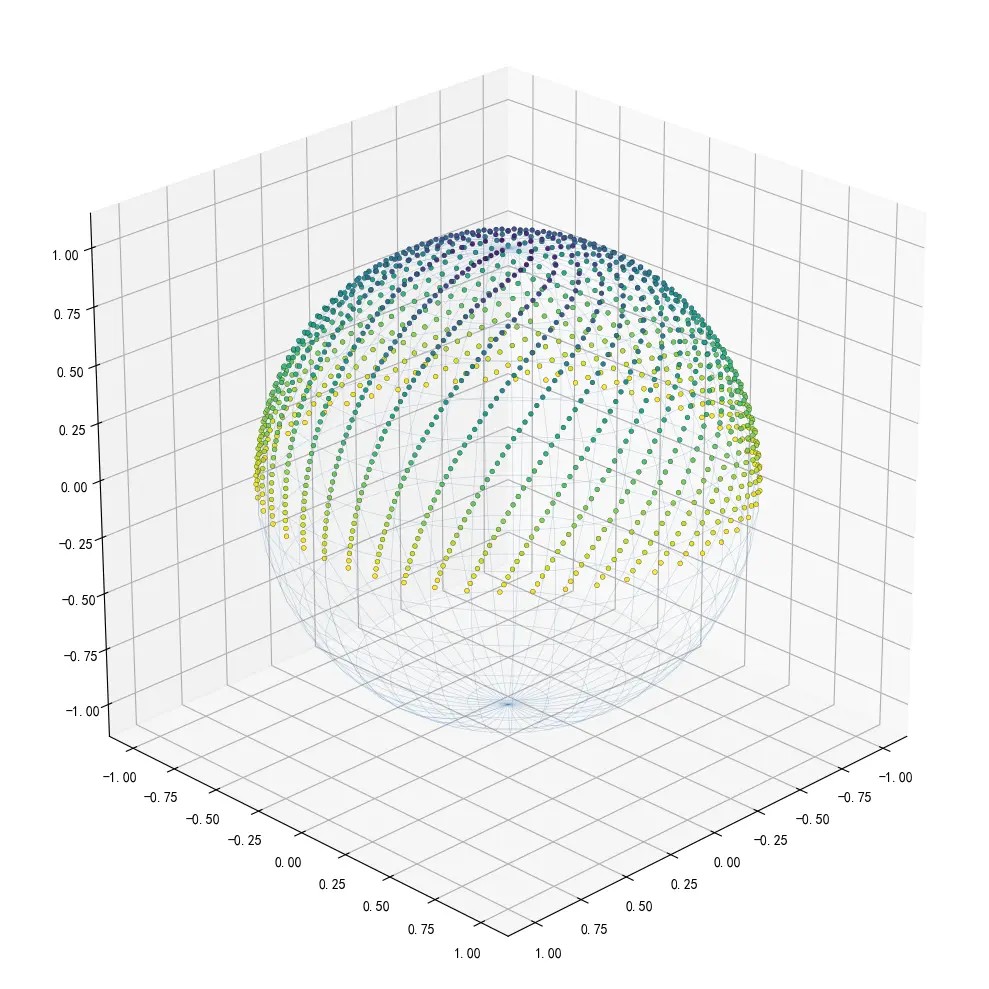}
\caption{$e_{i,j}$ points on the unit sphere for $M=50$}
\label{fig:sphere}
\end{figure}
As illustrated in Figure \ref{fig:sphere}, these points are roughly uniformly spaced over the upper hemisphere. They exhibit rotational periodicity in the horizontal directions but lack symmetry in the vertical
$z$-direction. Although this absence of full symmetry may introduce certain difficulties in analysis and summation estimates, it does not fundamentally hinder our approach. In fact, our construction does not heavily rely on such full symmetry. Rather, it enjoys considerable freedom and flexibility. Indeed, the construction is not strictly confined to the hemisphere and can be naturally adapted or restricted to smaller spherical domains as needed.

 To formalize the construction, let $P_{e_{i,j}}$ denote the orthogonal projection onto the one-dimensional subspace spanned by $e_{i,j}$, and let $P_{e_{i,j}}^{\perp}$ be the orthogonal projection onto the orthogonal complement $\lr{e_{i,j}}^{\perp}$.
The function $f_{\mathrm{b}}^{M}$ is then constructed as a superposition of tubes along lattice points in different directions
 \begin{equation}\label{equ:fb,t,intro}
f_{\mathrm{b}}^{M}(t,x,v)=M^{1-s}\sum_{1\leq i\leq j\leq M}K_{i,j}(x-vt)I_{i,j}(v),
\end{equation}
where
\begin{align*}
K_{i,j}(x)&=\chi (MP_{e_{i,j}}^{\perp }x)\chi (P_{e_{i,j}}x), \\
I_{i,j}(v)&=\chi(MP_{e_{i,j}}^{\perp }v)\chi (10P_{e_{i,j}}(v-e_{i,j})).
\end{align*}
Here, $\chi(x)$ is a smooth cutoff function supported in the unit ball $B(0,1)$.
The regularity parameter $s$ is chosen specifically to satisfy $\max\lr{s_{0},\frac{3}{4}}<s<1$.

\begin{minipage}{0.5\textwidth}
\begin{figure}[H]
  \centering
  \includegraphics[width=6cm]{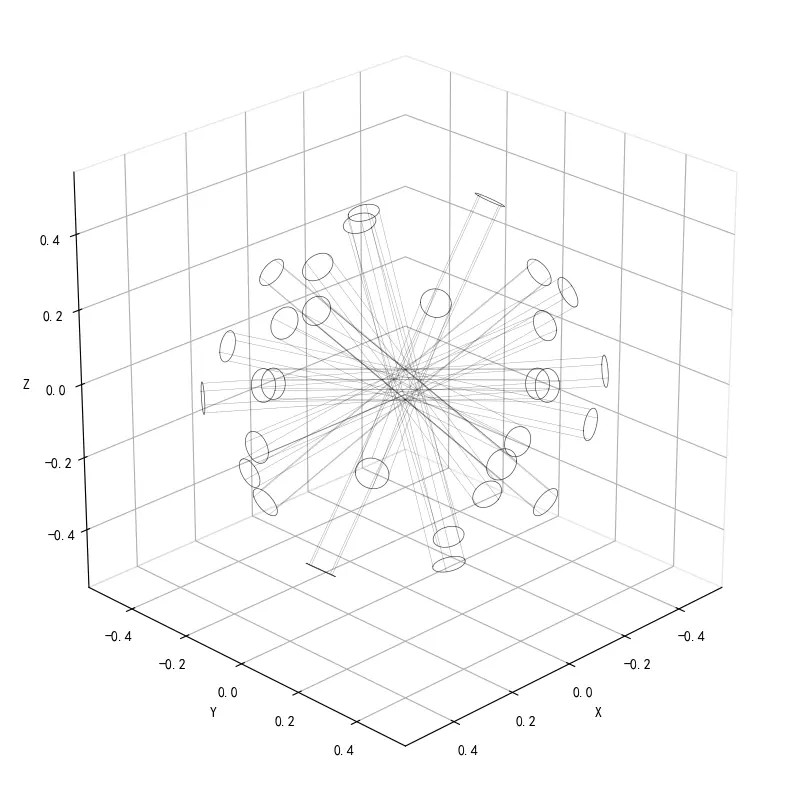}
\caption{$K_{i,j}(x)$ for $M=5$}
\label{fig:tube}
\end{figure}
\end{minipage}
\begin{minipage}{0.5\textwidth}
\begin{figure}[H]
  \centering
  \includegraphics[width=6cm]{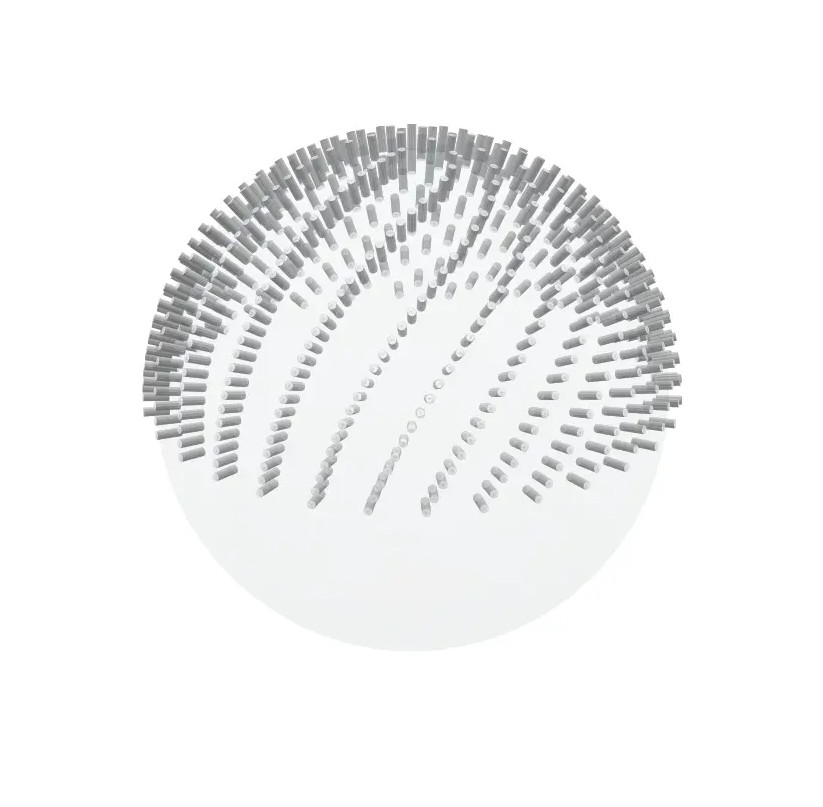}
\caption{$I_{i,j}(v)$ for $M=30$}
\label{fig:Iij}
\end{figure}
\end{minipage}
\vspace{0.5em}

The principal term $f_{\mathrm{r}}^{M}$ is initially supported near the origin
$$f_{\mathrm{r}}^{M}(0,x,v)=M^{\frac{3}{2}-s_{0}}N^{\frac{3}{2}} \chi (100Mx)\chi(Nv).$$
By the equation \eqref{equ:fr,equation,intro}, the solution $f_{\mathrm{r}}(t)$ can be expressed as
\begin{equation}\label{equ:fr,intro}
	f_{\mathrm{r}}^{M}(t,x,v)=M^{\frac{3}{2}-s_{0}}N^{\frac{3}{2}}\exp \left[
-\int_{0}^{t}A[f_{\mathrm{b}}^{M}](\tau,x,v)\,d\tau \right] \chi (100Mx)\chi (Nv).
\end{equation}

One key feature of $f_{\mathrm{b}}^{M}$ and $f_{\mathrm{r}}^{M}$ is the uniform compact support condition that
\begin{equation}\label{equ:uniform compact,intro}
\left\{
\begin{aligned}
\operatorname{Supp}f_{\mathrm{r}}^{M}(t)\subset B(0,1),\\
\operatorname{Supp}f_{\mathrm{b}}^{M}(t)\subset B(0,1).
\end{aligned}
\right.
\end{equation}
This uniform localization is essential for overcoming the growth of the Gaussian weight $e^{\lra{v}^{2}}$,
which is an analytic challenge inherent to the hard-sphere model.
More importantly, the specific geometric features of our construction are equally crucial for achieving the ill-posedness behaviour of solutions.
In what follows, we outline the key ideas, techniques, and difficulties involved in the proof.

\begin{enumerate}[wide=0pt, label=(\arabic*), align=left]

\setlength{\parindent}{2em}
\item \textbf{Dispersive techniques v.s. compact geometry}.
For the hard-sphere Boltzmann equation, the feature of the large-velocity growth is a main obstacle.
A seemingly available solution is to impose a uniform localization constraint \eqref{equ:uniform compact,intro}.

However, this constraint is quite restrictive and significantly limits the construction of approximate solutions.
In \cite{CH24well,CSZ24well}, the maximizer of the bilinear estimates for the loss term is built from a superposition of elongated tubes, which naturally allows for two free parameters. The condition \eqref{equ:uniform compact,intro}, by contrast, permits only a single-parameter construction.
Due to this limitation, the approximate solutions provided in \cite{CH24well,CSZ24well} are not applicable.

In other words, the construction of $f_{\mathrm{b}}^{M}$, along with the underlying dispersive techniques and harmonic analysis, is subject to this uniform localization constraint. Nevertheless, within the setting of compact geometry, we establish a uniform gap between distinct directions of lattice points $e_{i,j}$, summation bounds for the tubes $K_{i,j}(x)$, and the disjointness of the supports of the velocity-localized functions
$I_{i,j}(v)$. Specifically, we have
\begin{align}
|e_{i_{1},j_{1}}-e_{i_{2},j_{2}}|\geq& \frac{1}{M},\quad \text{for $(i_{1},j_{1})\neq (i_{2},j_{2})$},\label{equ:gap estimate,eij,intro}\\
\sum_{i,j}|\nabla_{x}^{k}K_{i,j}(x)|\lesssim& M^{2+k}\lrs{\frac{1}{M|x|+1}}^{2}\chi(x),\quad k=0,1,\label{equ:Kij,sum,intro}\\
\operatorname{Supp}I_{i_{1},j_{1}}\bigcap \operatorname{Supp}I_{i_{2},j_{2}}=&\emptyset,\quad
\text{for $(i_{1},j_{1})\neq (i_{2},j_{2})$},\\
\sum_{i,j}I_{i,j}(v)\leq& 1_{\lr{\frac{3}{4}\leq|v|\leq \frac{5}{4}}}(v).\label{equ:Iij,sum,intro}
\end{align}
 These estimates capture the geometric features of $f_{\mathrm{b}}^{M}$ and serve as the foundation for the subsequent analysis.

\item \textbf{Forward solving of the approximate solution due to velocity weight loss.}
Previous studies \cite{CH24well,CSZ24well} on the soft potential and constant kernel cases proved the existence of initial data via backward solving.
However, due to the irreversibility caused by the nature of the hard-sphere model, such an existence strategy is no longer applicable. Therefore, we must directly construct the approximate solution and solve the correction equation \eqref{equ:correction term,fc,intro} by forward time iteration.

We now briefly explain how our forward-in-time constructions of $f_{\mathrm{b}}^{M}$ and $f_{\mathrm{r}}^{M}$ indeed directly induce the desired ill-posed behavior.
First, the construction of $f_{\mathrm{b}}^{M}$ ensures that its profile structure remains essentially invariant over an
$O(1)$ time scale.
Based on \eqref{equ:Kij,sum,intro}, \eqref{equ:Iij,sum,intro}, and the specific choice of the regularity parameter $s$, the perturbation $f_{\mathrm{b}}^{M}$ satisfies the following uniform-in-time decay estimate
\begin{align}\label{equ:critical norm estimate for fb,intro}
\n{e^{\lra{v}^{2}}f_{\mathrm{b}}^{M}(t)}_{L_{v}^{2}H_{x}^{s_{0}}}\lesssim & M^{s_{0}-s}\ll 1,\quad \text{for $t\leq \frac{1}{4}.$}
\end{align}
More importantly, the particular construction of $f_{\mathrm{b}}^{M}$
concentrates most of the mass near the center of the ball, which is the key factor generating the ill-posed behavior. Specifically, within the support of
$f_{\mathrm{r}}^{M}$, namely $\lr{(x,v):|x|\leq \frac{1}{100M},|v|\leq N^{-1}}$, the term $A[f_{\mathrm{b}}^{M}]$ exhibits significant growth
\begin{align*}
A[f_{\mathrm{b}}^{M}](t,x,v)\sim \int_{\R^{3}}|u-v|f_{\mathrm{b}}^{M}(t,x,u)du \sim M^{-1-s}\sum_{i,j}K_{i,j}(x)\sim M^{1-s}\gg 1.
\end{align*}
This, together with the expression \eqref{equ:fr,intro} for $f_{\mathrm{r}}^{M}$, allows us to further establish that the principal term contracts rapidly over a short time interval
\begin{align}
\n{e^{\lra{v}^{2}}f_{\mathrm{r}}^{M}(t)}_{L_{v}^{2}H_{x}^{s_{0}}}\lesssim&  \exp[-t M^{1-s}].\label{equ:critical norm estimate for fr,upper bound,intro}
\end{align}
Consequently, by combining \eqref{equ:critical norm estimate for fb,intro} and \eqref{equ:critical norm estimate for fr,upper bound,intro}, we obtain the norm deflation property for the approximate solution
\begin{equation}\label{equ:critical norm estimate for fa,intro}
\left\{
\begin{aligned}
	\n{e^{\lra{v}^{2}}f_{\mathrm{a}}^{M}(0)}_{L_v^{2} H_x^{s_{0}}}\sim&  1,\\
	\n{e^{\lra{v}^{2}}f_{\mathrm{a}}^{M}(T_{*})}_{L_v^{2} H_x^{s_{0}}}\lesssim& \frac{1}{\ln M},
\end{aligned}
\right.
\end{equation}
where $T_{*}^{M}=M^{s-1}(\ln \ln M)$ becomes arbitrarily short in the limit, i.e.,
\begin{align*}
\lim_{M\to \infty}T_{*}^{M}=0.
\end{align*}

\item \textbf{Space-time estimates in the Gaussian-weighted $Z$-norm}.
To perturb the approximate solution into an exact solution to \eqref{equ:Boltzmann}, we need to establish a well-posedness theory for the correction equation \eqref{equ:correction term,fc,intro} and prove
 \begin{align}\label{equ:fc,decay,intro}
 \sup_{t \in [0, T_{*}^{M}]} \| f_{\mathrm{c}}^{M}(t) \|_{L_{v}^{2}H_{x}^{s_{0}}} \xrightarrow[M\to \infty]{} 0.
 \end{align}
Usually, one would expect a time-independent bilinear estimate of the form
\begin{align*}
\n{Q^{\pm}(f,g)}_{Z}\lesssim \n{f}_{Z}\n{g}_{Z}.
\end{align*}
Unfortunately, for the hard-sphere collision kernel, due to the loss of velocity weight, such an estimate generally fails no matter which $Z$-norm is chosen. To overcome this difficulty, we instead establish a collision estimate in the time-integrated form, using a Gaussian-weighted $Z$-norm
    \begin{align}\label{equ:bilinear estimate,space-time,intro}
&\bbn{e^{(1- t)\lra{v}^{2}}\int_{t_{0}}^{t}e^{-(t-\tau)v\cdot \nabla_{x}}Q^{\pm}(f,g)(\tau)d\tau}_{Z}\\
\lesssim & \lrs{t-t_{0}}^{\frac{1}{2}}\n{e^{(1- \tau)\lra{v}^{2}}f}_{L_{\tau}^{\infty}(t_{0},t;Z)}
\n{e^{(1- \tau)\lra{v}^{2}}g}_{L_{\tau}^{\infty}(t_{0},t;Z)}.\notag
\end{align}
Here, the
$Z$-norm requires a delicate choice. On the one hand, it must be sufficiently strong to close the bilinear space-time estimates. On the other hand, it needs to be as weak as possible to ensure the
$Z$-norm of the error terms remains uniformly controllable in parameter $M$ as in \eqref{equ:Z-norm,intro}, because higher regularity norms of $f_{\mathrm{a}}^{M}$ and the error terms grow quickly in $M$. Specifically, the
$Z$-norm we construct is a composite of the following four norms
\begin{align}\label{equ:Z-norm,intro}
\n{f}_{Z}:=M^{s_{0}}\n{f}_{L_{v}^{2,1}L_{x}^{2}}+M^{s_{0}-1}\n{\nabla_{x}f}_{L_{v}^{2,1}L_{x}^{2}}+\n{f}_{L_{v}^{1,1}L_{x}^{\infty}}+
M^{-1}\n{\nabla_{x}f}_{L_{v}^{1,1}L_{x}^{\infty}},
\end{align}
where $\n{f}_{L_{v}^{p,1}}:=\n{\lra{v}f}_{L_{v}^{p}}$.

Although space-time estimate \eqref{equ:bilinear estimate,space-time,intro} directly implies some local well-posedness, the resulting lifespan $T_{0}$
is typically very short. This short time scale depends primarily on the $Z$-norm of the approximate solution
$f_{\mathrm{a}}^{M}$ and the prefactor $(t-t_{0})^{\frac{1}{2}}$ appearing in estimate \eqref{equ:bilinear estimate,space-time,intro}. More precisely,
\begin{align*}
T_{0}\sim \n{f_{\mathrm{a}}^{M}}_{Z}^{2}\sim M^{2(s-1)}\ll T_{*}^{M}=M^{s-1}(\ln\ln M).
\end{align*}
Our main objective is to extend the lifespan of the solution up to the time at which ill-posed behaviour emerges. To achieve this, we employ an iterative scheme that extends the existence interval to the target time $T_{*}^{M}$.

However, this strategy is not straightforward. In each iteration, the control of the correction term grows exponentially. Consequently, the number of iterations is strictly constrained by uniform-in-time decay estimates of the error term $F_{\mathrm{err}}^{M}$. At the same time, the maximal single-step time scale
$T_{0}$ is so small that it prevents us from reaching the critical time $T_{*}^{M}$ within the limited number of iterations allowed.

To circumvent this time-scale obstruction, the idea is to introduce a rate parameter $\kappa$
that provides additional smallness. Based on this, we establish the following parameter-dependent estimate
  \begin{align}\label{equ:bilinear estimate,space-time,kappa,intro}
&\bbn{e^{(1-\kappa t)\lra{v}^{2}}\int_{t_{0}}^{t}e^{-(t-\tau)v\cdot \nabla_{x}}Q^{\pm}(f,g)(\tau)d\tau}_{Z}\\
\lesssim & \lrs{\frac{t-t_{0}}{\kappa}}^{\frac{1}{2}}\n{e^{(1-\kappa \tau)\lra{v}^{2}}f}_{L_{\tau}^{\infty}(t_{0},t;Z)}
\n{e^{(1-\kappa \tau)\lra{v}^{2}}g}_{L_{\tau}^{\infty}(t_{0},t;Z)}.\notag
\end{align}
Then, by setting $\kappa=\frac{1}{T_{*}^{M}}$, we can take the step size to be $T_{0}\sim \n{f_{\mathrm{a}}^{M}}_{Z}\sim M^{s-1}$,
which suffices to meet the minimal step size required for our iteration.

\item \textbf{$Z$-norm decay estimates for the error terms.} Finally, in order to obtain the uniform-in-time decay estimate of $f_{\mathrm{c}}^{M}$ stated in \eqref{equ:fc,decay,intro},
we must prove that the error term $F_{\mathrm{err}}^{M}$ enjoys a uniform-in-time decay estimate as well. Specifically,
\begin{equation}\label{equ:Z-norm,error,intro}
\sup_{0\leq t_{0}\leq t\leq T_{*}^{M}}\bbn{e^{\lra{v}^{2}}\int_{t_{0}}^{t}e^{-(t-\tau)v\cdot \nabla _{x}}F_{\text{err}
}^{M}(\tau)\,d\tau}_{Z}\lesssim M^{-\delta}.
\end{equation}
Estimating the
$Z$-norm of the error term is the most intricate part of the entire proof. Since the
$Z$-norm \eqref{equ:Z-norm,intro} itself consists of four sub-norms and there are eight error terms in \eqref{equ:correction term,fc,intro}, there are a total of 32 fine error bounds. On the other hand, these estimates rely heavily on the specific construction of the approximate solution, the aforementioned geometric estimates. In fact, \eqref{equ:Z-norm,intro} involves a super-critical norm $L_{x}^{\infty}$, which scales like $\dot{H}_{x}^{\frac{3}{2}}$. To ensure that the error bound in \eqref{equ:Z-norm,error,intro} decays in $M$, delicate dispersive techniques are required.

For the time integral of the error term, a direct estimation strategy exploits the shortness of the time interval to provide a smallness factor
\begin{equation}\label{equ:Ferr,intro}
	\bbn{e^{\lra{v}^{2}} \int_{t_{0}}^{t}e^{-(t-\tau)v\cdot \nabla _{x}}F_{\text{err}}^{M}(\tau)\,d\tau}_{Z}\lesssim T_{*}^{M}\n{e^{\lra{v}^{2}}F_{\text{err}}^{M}}_{L_{t}^{\infty }Z}.
\end{equation}
Most terms within the error term $F_{\mathrm{err}}^{M}$ can first be handled via
$L^{p}$-type estimates for the Boltzmann collision kernel.
Moreover, effective control is achieved by leveraging the uniform compact support which serves to absorb the Gaussian weight $e^{\lra{v}^{2}}$,
the Sobolev norm estimates of $f_{\mathrm{r}}^{M}$ and $f_{\mathrm{b}}^{M}$, and the smallness of the time scale $T_{*}^{M}$.

 However, a notable exception arises in treating the $L_{v}^{1,1}L_{x}^{\infty}$ norm of $Q^{\pm}(f_{\mathrm{b}}^{M},f_{\mathrm{b}}^{M})$.
For brevity, we introduce the following notation
\begin{equation*}
	D^{\pm}=\int_{t_{0}}^{t}e^{-(t-\tau)v\cdot \nabla
		_{x}}Q^{\pm}(f_{\mathrm{b}}^{M},f_{\mathrm{b}}^{M})(\tau)d\tau.
\end{equation*}
A direct application of the estimate \eqref{equ:Ferr,intro} yields
$$\n{D^{\pm}}_{L_{v}^{1,1}L_{x}^{\infty
}}\lesssim T^{M}_{*}\n{Q^{\pm}(f_{\mathrm{b}}^{M},f_{\mathrm{b}}^{M})}_{L_{v}^{1,1}L_{x}^{\infty
}}\lesssim T^{M}_{*}\n{f_{\mathrm{b}}^{M}}_{Z}^{2}\lesssim M^{1-s}(\ln\ln M)\to \infty,$$
which leads to a large growth term. To circumvent this, our strategy is to employ time-integration techniques from dispersive equations to achieve a substantial gain in the estimates.

Notably, the function $f_{\mathrm{b}}^{M}$ itself is constructed as a sum of numerous tubes given in \eqref{equ:fb,t}
and these tubes are mutually coupled through the collision kernel. This significantly increases the difficulty and complexity of the analysis. For instance,
\begin{align*}
D^{+} =&M^{2-2s}\sum_{i_{1},j_{1}}\sum_{i_{2},j_{2}}\int_{t_{0}}^{t}e^{-(t-\tau)v\cdot \nabla_{x}}Q^{+}(K_{i_{1},j_{1}}(x- v\tau )I_{i_{1},j_{1}}(v),K_{i_{2},j_{2}}(x-v\tau)I_{i_{2},j_{2}}(v))d\tau.
\end{align*}
Estimating $D^{+}$ requires careful geometric estimates on the decomposition of the hemisphere.
We need to analyze the effect of $Q^{+}$ on the hemisphere grid, and the measures of the localized projections of $D^+$ induced by the time integration. In fact, since $Q^{+}$ generates the collision projection $P_{e_{i_{1},j_{1}}}^{\perp}P_{\omega
}(u-v)$, an angular decomposition of $\omega$ is also needed.
Furthermore, a key part of the proof lies in properly handling the geometric and summation estimates over the lattice system. A typical estimate needed is of the following form
\begin{align*}
\sum_{(i_{1},j_{1})\neq (i_{2},j_{2})} \frac{1}{|e_{i_{1},j_{1}}-e_{i_{2},j_{2}}|}\lesssim M^{4}\ln M.
\end{align*}
In summary, the treatment of this proof requires various analytical techniques, including localization via time integration, projection measures, angular decomposition, lattice counting, and geometric estimates.
\end{enumerate}

\section{Norm Deflation Property for the Approximate Solution} \label{sec:Bounds on the approximation solution}
In this section, we aim to construct approximate solutions satisfying the norm deflation property. As discussed in Section \ref{section:Outline of the Proof}, we decompose the approximate solution $f_{\mathrm{a}}$ into two components
\begin{align*}
f_{\mathrm{a}}(t)=f_{\mathrm{r}}(t)+f_{\mathrm{b}}(t),
\end{align*}
where $f_{\mathrm{b}}$ satisfies the linear transport equation
\begin{align}\label{equ:fb equation}
\partial _{t}f_{\mathrm{b}}+v\cdot \nabla _{x}f_{\mathrm{b}}=0,
\end{align}
and $f_{\mathrm{r}}$ satisfies the drift-free linearized loss-term Boltzmann equation
\begin{align}\label{equ:fr,loss-term}
\partial_{t}f_{\mathrm{r}}=-Q^{-}(f_{\mathrm{r}},f_{\mathrm{b}}).
\end{align}
Here, for notational simplicity, we omit the superscript parameter $M$ throughout.

For the construction of $f_{\mathrm{b}}$, we rely on a lattice system that is roughly uniformly distributed on the unit hemisphere. Specifically, we consider the lattice points on the unit sphere defined by
 \begin{align}\label{equ:3d,eij}
 e_{i,j}=\lrs{\sin(\frac{\pi j}{2M}) \cos( \frac{2\pi i}{j}),\sin(\frac{\pi j}{2M})\sin(\frac{2\pi i}{j}),\cos (\frac{\pi j}{2M})},\quad 1\leq i\leq j\leq M.
 \end{align}
For an intuitive illustration, see Figure \ref{fig:sphere}. These points are roughly uniformly spaced across the upper hemisphere, with the total number of points scaling as order $M^{2}$. As introduced in \eqref{equ:fb,t,intro},
 the perturbation term $f_{\mathrm{b}}$ is constructed as a superposition of tubes aligned along these lattice points in various directions, given by
 \begin{equation}\label{equ:fb,t}
f_{\mathrm{b}}(t,x,v)=M^{1-s}\sum_{1\leq i\leq j\leq M}K_{i,j}(x-vt)I_{i,j}(v),
\end{equation}
where
\begin{align*}
K_{i,j}(x)&=\chi (MP_{e_{i,j}}^{\perp }x)\chi (P_{e_{i,j}}x), \\
I_{i,j}(v)&=\chi(MP_{e_{i,j}}^{\perp }v)\chi (10P_{e_{i,j}}(v-e_{i,j})).
\end{align*}

The principal term $f_{\mathrm{r}}$ is initially supported near the origin
$$f_{\mathrm{r}}(0,x,v)=M^{\frac{3}{2}-s_{0}}N^{\frac{3}{2}} \chi (100Mx)\chi(Nv).$$
By the equation \eqref{equ:fr,loss-term}, the solution $f_{\mathrm{r}}(t)$ can be expressed as
\begin{equation}\label{equ:fr}
	f_{\mathrm{r}}(t,x,v)=M^{\frac{3}{2}-s_{0}}N^{\frac{3}{2}}\exp \left[
-\int_{0}^{t}A[f_{\mathrm{b}}](\tau,x,v)\,d\tau \right] \chi (100Mx)\chi (Nv),
\end{equation}
where the term $A[f_{\mathrm{b}}]$ is given by (up to a constant)
\begin{equation}\label{equ:Afb}
A[f_{\mathrm{b}}](t,x,v)\sim \int_{\R^{3}}|u-v|f_{\mathrm{b}}(t,x,u)du.
\end{equation}

\paragraph{\textbf{Parameter selection}.}
Throughout this paper, the parameters are chosen as follows:
\begin{align}
	&M\gg 1, \quad  N=  M^{10},\label{equ:condition,parameters}\\
&\max\lr{s_{0},\frac{3}{4}}<s<1, \label{equ:condition,s,s0}\\
&T_{*}=M^{s-1}(\ln\ln M).\label{equ:T*}
\end{align}

We proceed by establishing the geometric estimates and fundamental properties of the requisite lattice system in Section \ref{sec:Geometry Estimates of Directional Projections}. Subsequently, in Section \ref{section:Sobolev Norm Estimates for Approximate Solutions}, we derive the Sobolev norm estimates for the approximate solutions and prove the norm deflation property for $f_{\mathrm{a}}$.
\subsection{Geometry Estimates of Directional Projections}\label{sec:Geometry Estimates of Directional Projections}
 We provide geometric estimates for a family of directions ${e_{i,j}}$ distributed on the upper hemisphere. We establish a uniform gap between distinct directions, summation bounds for the tubes $K_{i,j}$, and the disjointness of the velocity-localized functions $I_{i,j}$. These estimates are essential in subsequent arguments.
\begin{lemma}[Separation of directions and summation bounds]\label{lemma:eij,property}
 The following estimates hold
\begin{align}
|e_{i_{1},j_{1}}-e_{i_{2},j_{2}}|\geq& \frac{1}{M},\quad \text{for $(i_{1},j_{1})\neq (i_{2},j_{2})$},\label{equ:gap estimate,eij}\\
\sum_{i,j}|\nabla_{x}^{k}K_{i,j}(x)|\lesssim& M^{2+k}\lrs{\frac{1}{M|x|+1}}^{2}\chi(x),\quad k=0,1,\label{equ:Kij,sum}\\
\sum_{i,j}I_{i,j}(v)\leq& 1_{\lr{\frac{3}{4}\leq|v|\leq \frac{5}{4}}}(v).\label{equ:Iij,sum}
\end{align}
\end{lemma}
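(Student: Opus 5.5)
We prove the three estimates in order, since each uses the previous one.

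\textbf{Separation \eqref{equ:gap estimate,eij}.} Write $\theta_j=\frac{\pi j}{2M}$ for the polar angle and $\phi_{i,j}=\frac{2\pi i}{j}$ for the azimuth.
\begin{itemize}
\item If $j_1\neq j_2$, the points lie on different latitude circles. The Euclidean distance on $\mathbb{S}^2$ is at least the chord of the polar-angle difference:
\begin{align*}
|e_{i_1,j_1}-e_{i_2,j_2}|\geq 2\sin\frac{|\theta_{j_1}-\theta_{j_2}|}{2}\geq \frac{2}{\pi}\cdot\frac{\pi}{2M}=\frac1M ,
\end{align*}
using $\sin x\geq 2x/\pi$ on $[0,\pi/2]$.
\item If $j_1=j_2=j$, both points lie on the circle of radius $\sin\theta_j$. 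Their distance is $2\sin\theta_j\,|\sin(\pi(i_1-i_2)/j)|$. Distinct $i$'s give an angle $\pi k/j$ with $1\le k\le j-1$ (note $i=j$ and azimuth $0$ coincide only when $i\equiv 0$, which occurs for one index). Hence $|\sin(\pi k/j)|\geq 2/j$ roughly, while $\sin\theta_j\geq \frac{2}{\pi}\cdot\frac{\pi j}{2M}=j/M$. Together the distance is $\gtrsim (j/M)(1/j)=1/M$.
\end{itemize}
Exact constants need care; if the constant comes out below $1$, one adjusts the claim to $\gtrsim 1/M$, which is all that is used later. The case $j=1$ is a single point.

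\textbf{Summation bound \eqref{equ:Kij,sum}.} Each $K_{i,j}$ is supported in $|x|\le 2$ on a tube of radius $1/M$ around the line $\mathbb{R}e_{i,j}$; we absorb the support into $\chi(x)$ by enlarging the cutoff. Moreover $|\nabla^k K_{i,j}|\lesssim M^k$. So it suffices to count, at a fixed $x$, the directions whose tube contains $x$.
\begin{itemize}
\item If $|x|\lesssim 1/M$, trivially all $\lesssim M^2$ tubes contain $x$, which matches the bound $M^{2+k}$.
\item If $|x|=r\gg 1/M$, then $x$ lies in the tube of $e_{i,j}$ only when $|P^\perp_{e_{i,j}}x|\le 1/M$. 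This forces the angle between $\pm e_{i,j}$ and $x/|x|$ to be $\lesssim 1/(Mr)$, so the relevant $e_{i,j}$ lie in a spherical cap of radius $\lesssim 1/(Mr)$ (two caps, counting $\pm$).
\end{itemize}
By \eqref{equ:gap estimate,eij}, the $e_{i,j}$ form a $1/M$-separated set. The disjoint balls of radius $1/(2M)$ around them then give a count in such a cap of
\begin{align*}
\lesssim \frac{(1/(Mr))^2}{M^{-2}}+1=r^{-2}+1 .
\end{align*}
Multiplying by $M^k$ yields $M^{k}(r^{-2}+1)\lesssim M^{2+k}(Mr+1)^{-2}$, since $r\lesssim 1$. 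This counting step is the only real content and is where I expect care is needed: handling the caps near the equator, where $-e_{i,j}$ is not in the set, and the enlargement of the support cutoff.

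\textbf{Velocity bound \eqref{equ:Iij,sum}.} On $\operatorname{Supp}I_{i,j}$ we have
\begin{align*}
|P^\perp_{e_{i,j}}v|\le 1/M,\qquad |P_{e_{i,j}}v-e_{i,j}|\le 1/10 ,
\end{align*}
so $|v|\in[9/10-1/M,\;11/10+1/M]\subset[3/4,5/4]$, and $v$ lies within angle $\lesssim 1/M$ of $e_{i,j}$. Since $0\le\chi\le1$, the bound follows once the supports are disjoint. Disjointness needs angular separation larger than the tube half-width, i.e. $|e_{i_1,j_1}-e_{i_2,j_2}|>$ about $2.3/M$ (from $2\cdot\frac{1}{M}/(9/10)$). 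The bare gap $1/M$ is too weak here, so I would sharpen the separation argument. Going back to the computation above gives gap $\ge \frac{\pi}{2M}\cdot c$, which may still be insufficient. If so, the honest fix is to take $\chi$ supported in a smaller ball, e.g. $\chi(\cdot)$ supported in $B(0,\frac14)$ in the transverse variable. Only disjointness up to constants matters downstream, and the sum bound $\le 1$ then holds pointwise.
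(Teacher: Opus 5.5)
Your arguments for the separation estimate and for the tube sum follow the paper's proof. For $j_1\neq j_2$ you use a polar-angle chord bound. For $j_1=j_2$ you use the chord on the latitude circle. For $\sum_{i,j}K_{i,j}$ you combine the trivial count $\lesssim M^2$ near the origin with a cap-packing count based on the $\frac1M$-separation.

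You need not hedge on constants in the same-ring case. Your own numbers give $2\sin\theta_j\sin\frac{\pi}{j}\ge 2\cdot\frac{j}{M}\cdot\frac{2}{j}=\frac4M$ for $j\ge 2$. Writing the azimuthal factor as $\sin\frac{\pi k}{j}$ with $1\le k\le j-1$ also handles the wrap-around in $\varphi$ correctly; the paper's bound $\frac{4}{\pi^{2}}\theta_1|\varphi_1-\varphi_2|$ glosses over this. Your remarks on the two antipodal caps, and on enlarging the cutoff $\chi(x)$ on the right-hand side (each $K_{i,j}$ lives in $|x|\le\sqrt{1+M^{-2}}$), are legitimate refinements of the paper's argument.

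For $\sum_{i,j}I_{i,j}$ you do not reach the stated bound. Your suspicion is right, though, and can be made decisive: as literally stated, the bound is false.
The points $e_{j,j}$ and $e_{j+1,j+1}$ both have azimuth $2\pi$, so they lie on one meridian at angle $\frac{\pi}{2M}$. Let $m$ be the unit vector bisecting them. For both $e=e_{j,j}$ and $e=e_{j+1,j+1}$ we have $M|P^{\perp}_{e}m|=M\sin\frac{\pi}{4M}<\frac{\pi}{4}$ and $10|P_{e}(m-e)|=10(1-\cos\frac{\pi}{4M})=O(M^{-2})$. So $m$ lies in both supports. If $\chi\equiv 1$ on $B(0,\frac45)$, then $\sum_{i,j}I_{i,j}(m)\ge 2$ although $|m|=1$.

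The paper's proof avoids this only by asserting $|P^{\perp}_{e_{i,j}}v|\le\frac{1}{10M}$ on $\operatorname{Supp}I_{i,j}$. That is off by a factor $10$ from the definition $\chi(MP^{\perp}_{e_{i,j}}v)$. It holds precisely for the transverse cutoff $\chi(10MP^{\perp}_{e_{i,j}}v)$, which is your proposed fix.

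With that cutoff the bare gap $\frac1M$ is enough. Write $P_{e_{i_1,j_1}}v=ae_{i_1,j_1}$ and $P_{e_{i_2,j_2}}u=be_{i_2,j_2}$ with $a,b\in[\frac{9}{10},\frac{11}{10}]$. Then $|v-u|\ge\sqrt{ab}\,|e_{i_1,j_1}-e_{i_2,j_2}|-\frac{2}{10M}\ge\frac{7}{10}|e_{i_1,j_1}-e_{i_2,j_2}|$. The later bounds (e.g.\ $\|I_{i,j}\|_{L^1}\sim M^{-2}$) change only by constants.

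One caution: your claim that only disjointness up to constants matters downstream does not match how the paper uses it. The $Q^{+}(f_{\mathrm{b}},f_{\mathrm{b}})$ estimate needs $|v-u|\ge\frac14|e_{i_1,j_1}-e_{i_2,j_2}|$ for every pair of distinct indices. With mere bounded overlap you would have to move the near-neighbour pairs into the diagonal-type estimate. Shrinking the cutoff gives genuine separation and avoids this.
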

\begin{proof}
\textbf{1. Geometric preliminaries.}

We begin by establishing geometric bounds on the separation between points. Let us define two points in spherical coordinates
\begin{align*}
& p_{1}=\left(\sin \theta_1 \cos \varphi_1, \sin \theta_1 \sin \varphi_1, \cos \theta_1\right), \ \theta_{1}\in[0,\frac{\pi}{2}],\ \vp_{1}\in[0,2\pi],\\
& p_{2}=\left(\sin \theta_2 \cos \varphi_2, \sin \theta_2 \sin \varphi_2, \cos \theta_2\right), \
\theta_{2}\in[0,\frac{\pi}{2}],\ \vp_{2}\in[0,2\pi].
\end{align*}
Let $\gamma$ denote the angle between $p_{1}$ and $p_{2}$. By the law of cosines, we have
\begin{align}
|p_{1}-p_{2}|=&\sqrt{2-2\cos \ga},\label{equ:p1p2,distance}
\end{align}
where
\begin{align}
\cos \ga=&\cos \theta_1 \cos \theta_2+\sin \theta_1 \sin \theta_2 \cos \left(\varphi_1-\varphi_2\right)\notag\\
=&\cos (\theta_{1}-\theta_{2})+\sin \theta_1 \sin \theta_2 \lrs{\cos \left(\varphi_1-\varphi_2\right)-1}.\notag
\end{align}
These expressions yield two useful inequalities. First,
\begin{align}
\cos \ga\leq& \cos (\theta_{1}-\theta_{2}).\label{equ:gamma,theta12}
\end{align}
Second, assuming without loss of generality that $0 \leq \theta_{1} \leq \theta_{2} \leq \frac{\pi}{2}$, we have
\begin{align}
\cos \ga\leq& 1+(\sin\theta_{1})^{2}\lrs{\cos \left(\varphi_1-\varphi_2\right)-1},\quad 0\leq \theta_{1}\leq \theta_{2}\leq \frac{\pi}{2}.\label{equ:gamma,varphi12}
\end{align}
Substituting \eqref{equ:gamma,theta12} into \eqref{equ:p1p2,distance}, we obtain a lower bound dependent on the polar angles
\begin{align}\label{equ:p1p2,lower,bound,theta}
|p_{1}-p_{2}|\geq \sqrt{2-2\cos (\theta_{1}-\theta_{2})}\geq 2\bbabs{\sin\frac{\theta_{1}-\theta_{2}}{2}}\geq \frac{2}{\pi}|\theta_{1}-\theta_{2}|.
\end{align}
Similarly, inserting \eqref{equ:gamma,varphi12} into \eqref{equ:p1p2,distance} provides a bound involving the azimuthal angles
\begin{align}
|p_{1}-p_{2}|\geq& \sqrt{2-2(1+(\sin\theta_{1})^{2}\lrs{\cos \left(\varphi_1-\varphi_2\right)-1)}}
\label{equ:p1p2,lower,bound,varphi}\\
=&\sin \theta_{1}\sqrt{2-2\cos(\vp_{1}-\vp_{2})}\geq \frac{4}{\pi^{2}} \theta_{1} |\vp_{1}-\vp_{2}|.\notag
\end{align}

\textbf{2. Proof of the directional separation estimate \eqref{equ:gap estimate,eij}.}

We now apply these geometric bounds to the discrete points $e_{i,j}$. Recall the discretization parameters associated with $e_{i,j}$,
specifically $\theta=\frac{\pi j}{2M}$, and $\vp=\frac{2\pi i}{j}$.

We consider first the case $j_{1}\neq j_{2}$. Using \eqref{equ:p1p2,lower,bound,theta}, we get
\begin{align}
|e_{i_{1},j_{1}}-e_{i_{2},j_{2}}|\geq \frac{2}{\pi}\frac{\pi |j_{1}-j_{2}|}{2M}\geq \frac{|j_{1}-j_{2}|}{M}\geq \frac{1}{M}.\label{equ:gap estimate,eij,j1neqj2}
\end{align}

Next, we consider the case $j_{1}=j_{2}=j$ with $i_{1}\neq i_{2}$.
 Applying \eqref{equ:p1p2,lower,bound,varphi}, we obtain
\begin{align}
|e_{i_{1},j_{1}}-e_{i_{2},j_{2}}|\geq  \frac{4}{\pi^{2}}\frac{\pi j}{2M} \frac{2\pi |i_{1}-i_{2}|}{j}=\frac{8}{M}.\label{equ:gap estimate,eij,j1=j2}
\end{align}
Combining \eqref{equ:gap estimate,eij,j1neqj2} and \eqref{equ:gap estimate,eij,j1=j2} yields the desired separation estimate \eqref{equ:gap estimate,eij}.

\textbf{3. Proof of the summation bound estimates \eqref{equ:Kij,sum}.}

Given that the $x$-derivative $\nabla_{x}$ acting on $K_{i,j}(x)$ produces a factor $M$,
it suffices to deal with the $k=0$ case, as the $k=1$ case follows from a similar way.

For $|x| \leq M^{-1}$, we have a trivial bound that
\begin{align*}
\sum_{i,j}K_{i,j}(x)\lesssim M^{2}.
\end{align*}

For $|x|\geq M^{-1}$, we observe that
\begin{align*}
\sum_{i,j}K_{i,j}(x)=&\sum_{i,j}\chi (MP_{e_{i,j}}^{\perp }x)\chi (P_{e_{i,j}}x)\\
\leq&\chi(x)\sharp\lr{1\leq i\leq j \leq M:|P_{e_{i,j}}^{\perp}\frac{x}{|x|}|\leq \frac{2}{M|x|}}
\end{align*}
By the separation estimate \eqref{equ:gap estimate,eij}, the unit vectors $\lr{e_{i,j}}$ are mutually separated by a distance of at least $\frac{1}{M}$. This separation property implies that the spherical caps of diameter $\frac{1}{M}$ centered at these unit vectors are pairwise disjoint. A standard packing argument shows that the number of such disjoint caps is bounded above by the ratio of the sphere's total surface area to the minimal area occupied by an individual cap. Consequently, we have
\begin{align*}
\sharp\lr{1\leq i\leq j \leq M:|P_{e_{i,j}}^{\perp}\frac{x}{|x|}|\leq \frac{2}{M|x|}}\lesssim \lrs{\frac{2}{M|x|}}^{2}\div \lrs{\frac{1}{M}}^{2}\lesssim \frac{1}{|x|^{2}}.
\end{align*}

Combining the two cases, we obtain
\begin{align*}
\sum_{i,j}K_{i,j}(x)\lesssim& M^{2}\lrs{\frac{1}{M|x|+1}}^{2}\chi(x),
\end{align*}
which completes the proof of \eqref{equ:Kij,sum}.

\textbf{4. Proof of the disjointness estimate \eqref{equ:Iij,sum}.}

It suffices to show that the supports of $I_{i,j}$ are mutually disjoint. Specifically, we aim to prove that
\begin{align}\label{equ:disjoint support,Iij}
\operatorname{Supp}I_{i_{1},j_{1}}\bigcap \operatorname{Supp}I_{i_{2},j_{2}}=\emptyset,\quad
\text{for $(i_{1},j_{1})\neq (i_{2},j_{2})$}.
\end{align}
Let $v\in \operatorname{Supp}I_{i_{1},j_{1}}$ and $u\in \operatorname{Supp}I_{i_{2},j_{2}}$. Decomposing these vectors into their parallel and perpendicular components with respect to their respective directions gives
\begin{align}
&v=P_{e_{i_{1},j_{1}}}v+P_{e_{i_{1},j_{1}}}^{\perp}v,\quad |P_{e_{i_{1},j_{1}}}v|\in [\frac{9}{10},\frac{11}{10}],\quad |P_{e_{i_{1},j_{1}}}^{\perp}v|\leq \frac{1}{10M},\label{equ:v,Iij}\\
&u=P_{e_{i_{2},j_{2}}}u+P_{e_{i_{2},j_{2}}}^{\perp}u,\quad |P_{e_{i_{2},j_{2}}}u|\in [\frac{9}{10},\frac{11}{10}],\quad  |P_{e_{i_{2},j_{2}}}^{\perp}u|\leq \frac{1}{10M},\label{equ:u,Iij}
\end{align}

Using the algebraic identity
\begin{align*}
|ae_{i_{1},j_{1}}-be_{i_{2},j_{2}}|^{2}=&a^{2}+b^{2}-2ab\lra{e_{i_{1},j_{1}},e_{i_{2},j_{2}}}\\
=&(a-b)^{2}+2ab(1-\lra{e_{i_{1},j_{1}},e_{i_{2},j_{2}}})\notag\\
\geq&ab|e_{i_{1},j_{1}}-e_{i_{2},j_{2}}|^{2},\notag
\end{align*}
alongside the separation estimate \eqref{equ:gap estimate,eij} for $e_{i,j}$ and the magnitude bounds from \eqref{equ:v,Iij}--\eqref{equ:u,Iij}, we derive a lower bound for the relative velocity
\begin{align}\label{equ:lower bound,v,u,eij}
|v-u|\geq& |P_{e_{i_{1},j_{1}}}v-P_{e_{i_{2},j_{2}}}u|
-|P_{e_{i_{1},j_{1}}}^{\perp}v|
-|P_{e_{i_{2},j_{2}}}^{\perp}u|\\
\geq& \frac{|e_{i_{1},j_{1}}-e_{i_{2},j_{2}}|}{2}-|P_{e_{i_{1},j_{1}}}^{\perp}v|
-|P_{e_{i_{2},j_{2}}}^{\perp}u|\notag\\
\geq&\frac{|e_{i_{1},j_{1}}-e_{i_{2},j_{2}}|}{4}\geq \frac{1}{4M}.\notag
\end{align}

The estimate \eqref{equ:lower bound,v,u,eij} implies that the distance between any two points belonging to distinct supports is strictly positive, which establishes the disjointness property \eqref{equ:disjoint support,Iij}. Consequently, for any
given $v$, at most one term $I_{i_{0},j_{0}}(v)$ in the sum $\sum_{i,j}I_{i,j}(v)$ is non-zero.
 Furthermore, each term is supported within the annulus, that is,
\begin{align*}
\operatorname{Supp}I_{i,j}\subset \lr{v\in \R^{3}:\frac{3}{4}\leq|v|\leq \frac{5}{4}}
\end{align*}
Thus, we deduce
\begin{align*}
\sum_{i,j}I_{i,j}(v)\leq 1_{\lr{\frac{3}{4}\leq|v|\leq \frac{5}{4}}}(v),
\end{align*}
which concludes the proof.
\end{proof}
\subsection{Sobolev Norm Estimates for Approximate Solutions}\label{section:Sobolev Norm Estimates for Approximate Solutions}
We now establish Sobolev norm estimates for the components $f_{\mathrm{b}}$, $f_{\mathrm{r}}$, and the full approximation solution $f_{\mathrm{a}}$. Through these estimates, we deduce the norm deflation property for the approximate solution $f_{\mathrm{a}}$.

\begin{lemma}[Sobolev norm estimates for $f_{\mathrm{b}}$]\label{lemma:Hs,bounds on fb}
For $t\in [0,\frac{1}{4}]$, we have
\begin{align}
\n{e^{\lra{v}^{2}}f_{\mathrm{b}}(t)}_{L_{v}^{2}H_{x}^{s_{0}}}
\lesssim & M^{s_{0}-s},\label{equ:critical norm estimate for fb}\\
\n{e^{\lra{v}^{2}}f_{\mathrm{b}}(t)}_{L_{v}^{p}L_{x}^{\infty}}
+M^{-1}\n{e^{\lra{v}^{2}}\nabla_{x}f_{\mathrm{b}}(t)}_{L_{v}^{p}L_{x}^{\infty}}\lesssim& M^{1-s},\quad p\in[1,\infty],
\label{equ:sobolev norm,fb,LvpLxinf}\\
\n{e^{\lra{v}^{2}}f_{\mathrm{b}}(t)}_{L_{x}^{2}L_{v}^{1}}+
M^{-1}\n{e^{\lra{v}^{2}}\nabla_{x}f_{\mathrm{b}}(t)}_{L_{x}^{2}L_{v}^{1}}\lesssim& M^{-\frac{1}{2}-s}.\label{equ:sobolev norm,fb,L2L1}
\end{align}

\end{lemma}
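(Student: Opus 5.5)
The plan is to use three structural facts about $f_{\mathrm{b}}$. First, by \eqref{equ:Iij,sum} every term is supported in velocity in the annulus $\{\frac34\le |v|\le\frac54\}$, so the weight satisfies $e^{\lra{v}^{2}}\lesssim 1$ on the support and can be dropped throughout. Second, by the disjointness \eqref{equ:disjoint support,Iij}, for each fixed $v$ at most one pair $(i,j)$ contributes to the sum \eqref{equ:fb,t}. Third, for $t\le \frac14$ the free transport only shifts each tube by $vt$ with $v\in\operatorname{Supp}I_{i,j}$, so the tube is moved essentially along its own axis. Derivatives in $x$ fall on $K_{i,j}$ and cost at most a factor $M$; this is why every estimate comes with its $M^{-1}\nabla_x$ companion, and I will only describe the undifferentiated bounds.

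\textbf{Proof of \eqref{equ:critical norm estimate for fb}.}
1. By disjointness of the $I_{i,j}$ and translation invariance of $H^{s_0}_x$,
\[
\n{f_{\mathrm{b}}(t)}_{L_v^2H_x^{s_0}}^2=M^{2-2s}\sum_{i,j}\n{I_{i,j}}_{L^2_v}^2\,\n{K_{i,j}}_{H^{s_0}_x}^2 .
\]
2. Interpolating between $L^2$ and $H^1$ gives $\n{K_{i,j}}_{H^{s_0}}\lesssim M^{s_0}\n{K_{i,j}}_{L^2}$. Here $\n{K_{i,j}}_{L^2}^2\sim M^{-2}$, since the tube has cross-section $M^{-1}\times M^{-1}$ and length $1$.
3. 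Likewise $\n{I_{i,j}}_{L^2}^2\lesssim |\operatorname{Supp} I_{i,j}|\sim M^{-2}$, and there are $\sim M^2$ pairs $(i,j)$.
4. Collecting, the total is $M^{2-2s}\cdot M^{2}\cdot M^{-2}\cdot M^{2s_0}M^{-2}=M^{2(s_0-s)}$, which gives \eqref{equ:critical norm estimate for fb}.

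\textbf{Proof of \eqref{equ:sobolev norm,fb,LvpLxinf}.}
Since at most one term is nonzero for each $v$ and $|K_{i,j}|\le \n{\chi}_\infty^2$, we get $\sup_x|f_{\mathrm{b}}(t,x,v)|\lesssim M^{1-s}1_{\{\frac34\le|v|\le\frac54\}}$. The $L^p_v$ norm over this bounded set is $O(1)$ for every $p\in[1,\infty]$. The gradient term gains exactly one factor $M$, which is compensated by the $M^{-1}$ in front of it.

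\textbf{Proof of \eqref{equ:sobolev norm,fb,L2L1}.}
This is the step needing a geometric argument, and I expect it to be the main obstacle. The $v$-integration must be done before the $L^2_x$ norm, so the tubes overlap and their overlap has to be counted.

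1. Take $v\in\operatorname{Supp}I_{i,j}$ and write $v=ae_{i,j}+w$ with $|a|\le\frac{11}{10}$ and $|w|\le \frac{1}{10M}$. For $t\le\frac14$ we then have $|P^\perp_{e_{i,j}}(vt)|\le \frac{1}{40M}$ and $|P_{e_{i,j}}(vt)|\le\frac12$.
2. Hence $K_{i,j}(x-vt)\lesssim \wt K_{i,j}(x):=1_{\{|P^\perp_{e_{i,j}}x|\le 2/M,\ |P_{e_{i,j}}x|\le 2\}}$, an enlarged tube independent of $v$ and $t$.
3. Integrating in $v$ and using $|\operatorname{Supp}I_{i,j}|\lesssim M^{-2}$ gives
\[
\int |f_{\mathrm{b}}(t,x,v)|\,dv\lesssim M^{1-s}M^{-2}\sum_{i,j}\wt K_{i,j}(x).
\]
4. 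I then rerun step 3 of the proof of Lemma \ref{lemma:eij,property} for the enlarged tubes. The separation \eqref{equ:gap estimate,eij} together with the packing argument should give, with only constants changed,
\[
\sum_{i,j}\wt K_{i,j}(x)\lesssim M^2\lrs{\frac{1}{M|x|+1}}^2 1_{\{|x|\le 3\}} .
\]
5. Substituting $y=Mx$,
\[
\n{(M|x|+1)^{-2}1_{\{|x|\le3\}}}_{L^2_x}^2=M^{-3}\int_{|y|\le 3M}(|y|+1)^{-4}\,dy\lesssim M^{-3}.
\]
6. Therefore $\n{f_{\mathrm{b}}(t)}_{L^2_xL^1_v}\lesssim M^{1-s}M^{-3/2}=M^{-\frac12-s}$. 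For $\nabla_x f_{\mathrm{b}}$ the same argument applies with $|\nabla K_{i,j}|\lesssim M\wt K_{i,j}$, and the extra factor $M$ is absorbed by the $M^{-1}$ prefactor.
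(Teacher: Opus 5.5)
Your proposal is correct and follows the paper's proof essentially step for step. You drop the Gaussian weight using compact velocity support. The $H^{s_0}_x$ bound comes from the disjointness of the $I_{i,j}$: your exact orthogonality identity is the same fact the paper uses via Minkowski and $\|\sum_{i,j}I_{i,j}\|_{L^2_v}\lesssim 1$. The $L^p_vL^\infty_x$ bound is the trivial sup bound. For the $L^2_xL^1_v$ bound you dominate $K_{i,j}(x-vt)$ by $v$- and $t$-independent enlarged tubes, use the packing count $\sum_{i,j}\widetilde K_{i,j}(x)\lesssim M^2(M|x|+1)^{-2}$, and finish with $\|(M|x|+1)^{-2}\|_{L^2_x}\lesssim M^{-3/2}$, exactly as the paper does.

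The remaining differences are cosmetic. You use an indicator for the enlarged tube instead of the paper's $\chi(MP^{\perp}_{e_{i,j}}x/10)\chi(P_{e_{i,j}}x/10)$. Also, as in the paper, you quote the transverse width of $\operatorname{Supp}I_{i,j}$ as $\frac{1}{10M}$, although the definition only gives $\frac{1}{M}$; your enlarged width $2/M$ absorbs this anyway.
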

\begin{proof}
Recall that
\begin{equation*}
f_{\mathrm{b}}(t,x,v)=M^{1-s}\sum_{i,j}K_{i,j}(x-vt)I_{i,j}(v).
\end{equation*}
Due to the uniform compact support of $f_{\mathrm{b}}$, the Gaussian weight $e^{\lra{v}^{2}}$ is uniformly bounded and can thus be omitted.
By the Minkowski inequality, we have
\begin{align*}
\n{\lra{\nabla_{x}}^{s_{0}}f_{\mathrm{b}}(t,x,v)}_{L_{v}^{2}L_{x}^{2}}\leq &
M^{1-s}\bbn{\sum_{i,j}\lra{\nabla_{x}}^{s_{0}}K_{i,j}(x-vt)I_{i,j}(v)}_{L_{v}^{2}L_{x}^{2}}\\
\leq&M^{1-s}\bbn{\sum_{i,j}\n{\lra{\nabla_{x}}^{s_{0}}K_{i,j}(x-vt)}_{L_{x}^{2}}I_{i,j}(v)}_{L_{v}^{2}}\notag\\
\leq& M^{1-s}M^{s_{0}}M^{-1}\bbn{\sum_{i,j}I_{i,j}(v)}_{L_{v}^{2}}\lesssim M^{s_{0}-s},\notag
\end{align*}
where in the last inequality we have employed the bounds
\begin{align*}
\n{\lra{\nabla_{x}}^{s_{0}}K_{i,j}(x)}_{L_{x}^{2}}\lesssim& M^{s_{0}}M^{-1},
\end{align*}
and the summation estimate \eqref{equ:Iij,sum} for $I_{i,j}$
\begin{align}
\bbn{\sum_{i,j}I_{i,j}(v)}_{L_{v}^{2}}\lesssim& \n{1_{\lr{\frac{3}{4}\leq|v|\leq \frac{5}{4}}}(v)}_{L_{v}^{2}}\lesssim 1.\label{equ:upper bound,Ij,sum}
\end{align}
Hence, we complete the proof of \eqref{equ:critical norm estimate for fb}.

For \eqref{equ:sobolev norm,fb,LvpLxinf}, using estimate \eqref{equ:upper bound,Ij,sum} again, we have
\begin{align*}
&\n{f_{\mathrm{b}}(t)}_{L_{v}^{p}L_{x}^{\infty}}
+M^{-1}\n{\nabla_{x}f_{\mathrm{b}}(t)}_{L_{v}^{p}L_{x}^{\infty}}\\
\leq &M^{1-s}\bbn{\sum_{i,j}K_{i,j}(x-vt)I_{i,j}(v)}_{L_{v}^{p}L_{x}^{\infty}}
+M^{-s}\bbn{\sum_{i,j}\nabla_{x}K_{i,j}(x-vt)I_{i,j}(v)}_{L_{v}^{p}L_{x}^{\infty}}\\
\lesssim& \lrs{M^{1-s}\sup_{j}\n{K_{i,j}}_{L_{x}^{\infty}}+M^{-s}\sup_{i,j}\n{\nabla_{x}K_{ij}}_{L_{x}^{\infty}}}\bbn{\sum_{i,j}I_{i,j}(v)}_{L_{v}^{p}}\\
\lesssim& M^{1-s}.
\end{align*}

For \eqref{equ:sobolev norm,fb,L2L1},
under the given constraints on the $v$-variable and for $t\in[0,\frac{1}{4}]$, we have the pointwise estimate
\begin{align}\label{equ:fb,pointwise,estimate,upper}
f_{\mathrm{b}}(t,x,v)=M^{1-s}\sum_{i,j}K_{i,j}(x-vt)I_{i,j}(v)\leq M^{1-s}\sum_{i,j}\wt{K}_{i,j}(x)I_{i,j}(v),
\end{align}
where
\begin{align*}
\wt{K}_{i,j}(x)=\chi(\frac{MP_{e_{i,j}}^{\perp }x}{10})\chi (\frac{P_{e_{i,j}}x}{10}).
\end{align*}
Following an argument similar to that for \eqref{equ:Kij,sum}, we have the following estimate
\begin{align}\label{equ:upper bound,Kj,sum}
\sum_{i,j}|\nabla_{x}^{k}\wt{K}_{i,j}(x)|\lesssim M^{2+k}\lrs{\frac{1}{M|x|+1}}^{2}\chi(\frac{x}{10}),\quad k=0,1.
\end{align}
Using this estimate \eqref{equ:upper bound,Kj,sum}, we derive
\begin{align*}
&\n{f_{\mathrm{b}}(t)}_{L_{x}^{2}L_{v}^{1}}+M^{-1}\n{\nabla_{x}f_{\mathrm{b}}(t)}_{L_{x}^{2}L_{v}^{1}}\\
\lesssim& M^{1-s}\bbn{\sum_{i,j}\wt{K}_{i,j}(x)\n{I_{i,j}}_{L_{v}^{1}}}_{L_{x}^{2}}+
M^{-s}\bbn{\sum_{i,j}\nabla_{x}\wt{K}_{i,j}(x)\n{I_{i,j}}_{L_{v}^{1}}}_{L_{x}^{2}}\\
\lesssim& M^{1-s}M^{-2}M^{2}\bbn{\lrs{\frac{1}{M|x|+1}}^{2}}_{L_{x}^{2}}\lesssim M^{-\frac{1}{2}-s},
\end{align*}
which completes the proof of \eqref{equ:sobolev norm,fb,L2L1}.
\end{proof}

Prior to analyzing $f_{\mathrm{r}}$, we first establish crucial pointwise estimates for $A[f_{\mathrm{b}}]$ given by \eqref{equ:Afb}.
\begin{lemma}[Pointwise estimates for $f_{\mathrm{b}}$]\label{lemma:pointwise estimate,fb}
Let $t\in[0,\frac{1}{4}]$.
For $k=0,1,2$, we have the pointwise upper bound
\begin{align}\label{equ:upper bound,fr,k}
\big|\chi(Nv) \nabla_{x}^{k}A[f_{\mathrm{b}}](t,x,v)\big |\lesssim M^{k+1-s}.
\end{align}
Moreover, we have the corresponding lower bound
\begin{equation}\label{equ:upper lower bound,fr}
\chi(Nv)A[f_{\mathrm{b}}] \gtrsim  M^{1-s} \chi(Nv),\quad \text{for $|x|\leq \frac{1}{100M}$}.
\end{equation}
\end{lemma}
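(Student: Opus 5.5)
Since $\chi(Nv)$ restricts to $|v|\leq N^{-1}=M^{-10}$ and $f_{\mathrm{b}}(t,x,u)$ is supported in $\frac34\leq|u|\leq\frac54$ by \eqref{equ:Iij,sum}, the kernel $|u-v|$ is comparable to $1$ on the relevant region and has no singularity there. The plan is therefore to write
\begin{align*}
\nabla_x^k A[f_{\mathrm{b}}](t,x,v)\sim M^{1-s}\sum_{i,j}\int_{\R^3}|u-v|\,(\nabla^k K_{i,j})(x-ut)\,I_{i,j}(u)\,du,
\end{align*}
where differentiation in $x$ passes onto $K_{i,j}$. The angular integral $\int_{\mathbb S^2}|(u-v)\cdot\omega|d\omega$ is a constant multiple of $|u-v|$, which is how \eqref{equ:Afb} arises.

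\textbf{Upper bound \eqref{equ:upper bound,fr,k}.} Each $K_{i,j}$ satisfies $|\nabla^k K_{i,j}|\lesssim M^k$, since each derivative costs at most a factor $M$ from the perpendicular scale; this holds for $k=2$ as well. We bound $|u-v|\lesssim 1$ and use disjointness of the supports of $I_{i,j}$. This gives
\begin{align*}
|\nabla^k_x A|\lesssim M^{1-s}M^k\sum_{i,j}\|I_{i,j}\|_{L^1}.
\end{align*}
Each $I_{i,j}$ has volume $\sim M^{-2}$ (width $M^{-1}$ in two directions, $O(1)$ along $e_{i,j}$), and there are $\sim M^2$ pairs $(i,j)$. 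The sum is therefore $O(1)$ (equivalently, it is at most $\|1_{\{3/4\le|u|\le5/4\}}\|_{L^1}$ by \eqref{equ:Iij,sum}). This yields $M^{k+1-s}$ for $k=0,1,2$ uniformly in $t$.

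\textbf{Lower bound \eqref{equ:upper lower bound,fr}.} All terms are nonnegative, and $|u-v|\geq \frac34-M^{-10}\geq\frac12$. It therefore suffices to show that for $|x|\leq\frac1{100M}$, $t\leq\frac14$, and $u$ in a fixed positive-proportion subset of $\operatorname{Supp}I_{i,j}$, we have $K_{i,j}(x-ut)\gtrsim1$. Taking $\chi\geq\frac12$ on $B(0,\frac12)$ (which we may assume, say $\chi=1$ on $B(0,1/2)$), restrict to
\begin{align*}
U_{i,j}=\{u:|MP^\perp_{e_{i,j}}u|\leq\tfrac12,\ |10P_{e_{i,j}}(u-e_{i,j})|\leq\tfrac12\}.
\end{align*}
On $U_{i,j}$ we have $I_{i,j}\gtrsim1$ and $|U_{i,j}|\sim M^{-2}$. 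The perpendicular part satisfies
\begin{align*}
|MP^\perp(x-ut)|\leq M|x|+tM|P^\perp u|\leq \tfrac1{100}+\tfrac18<\tfrac12,
\end{align*}
and the parallel part satisfies $|P(x-ut)|\leq \frac1{100M}+\frac14\cdot\frac{21}{20}<\frac12$. Hence $K_{i,j}(x-ut)\gtrsim1$ there. Summing $M^{1-s}\cdot M^{-2}$ over the $\sim M^2$ indices (the count $\sum_{j\le M} j\sim M^2$) gives $A\gtrsim M^{1-s}$.

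\textbf{Main obstacle.} The main point to check is the lower bound's tube-support verification. The factor $t\leq\frac14$ must keep $x-ut$ inside the unit-length tube along $e_{i,j}$, and the perpendicular drift $t|P^\perp u|\leq \frac{t}{2M}$ must stay below the tube width $\frac1M$. The numerical constants in $\chi$'s plateau may need adjusting, but the choice $|x|\leq\frac{1}{100M}$ is evidently designed for exactly this.
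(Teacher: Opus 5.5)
Your proof is correct and follows essentially the paper's route. You reduce to $|u-v|\sim 1$ on the support; you get the upper bound from a factor $M^{k}$ per derivative times $\sum_{i,j}\n{I_{i,j}}_{L^{1}}\lesssim 1$; and for the lower bound you check that for $|x|\leq \frac{1}{100M}$, $t\leq\frac14$, each of the $\sim M^{2}$ tubes contributes $\gtrsim M^{1-s}M^{-2}$, which the paper phrases via the two-sided pointwise bound \eqref{equ:pointwise estimate on fb} and the count \eqref{equ:lower bound,fb,sum}. The only difference is cosmetic: in the upper bound the paper passes through $\wt{K}_{i,j}(x)$, which also records the spatial decay $(M|x|+1)^{-2}$ it reuses later for $Q^{-}(f_{\mathrm{b}},f_{\mathrm{b}})$, whereas you use the cruder uniform bound $\sup|\nabla^{k}K_{i,j}|\lesssim M^{k}$, which suffices for this lemma.
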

\begin{proof}
Recall the expression \eqref{equ:Afb} that
\begin{align}\label{equ:Afb,pointwise}
A[f_{\mathrm{b}}](t,x,v)\sim \int_{\R^{3}}|u-v|f_{\mathrm{b}}(t,x,u)du.
\end{align}
By analyzing the supports of the variables $v$ and $u$, we observe that $|v|\sim N^{-1}$ and $|u|\sim 1$, which implies $|u-v|\sim 1$. This observation yields
\begin{align}\label{equ:pointwise estimate on fb,integral,final}
\chi(Nv)\int_{\R^{3}} |u-v|f_{\mathrm{b}
}(t,x,u)\,du
\sim&  \chi(Nv)\int_{\R^{3}} f_{\mathrm{b}
}(t,x,u)\,du.
\end{align}

For $0\leq t\leq \frac{1}{4}$, under the given constraints on the $x$-variable and $u$-variable, we have the pointwise estimate for $f_{\mathrm{b}}$
\begin{align}\label{equ:pointwise estimate on fb}
&M^{1-s}\sum_{i,j}\chi
	(10MP_{e_{i,j}}^{\perp }x)\chi (10P_{e_{i,j}}x)\chi
	(MP_{e_{i,j}}^{\perp }u)\chi (10P_{e_{i,j}}(u-e_{i,j})) \\
 \leq& f_{\mathrm{b}}(t,x,u)\leq M^{1-s}\sum_{i,j}\chi (\frac{
		MP_{e_{i,j}}^{\perp }x}{10})\chi (\frac{P_{e_{i,j}}x}{10})\chi
	(MP_{e_{i,j}}^{\perp }u)\chi (10P_{e_{i,j}}(u-e_{i,j})).\notag
\end{align}
Thus, for the upper bound \eqref{equ:upper bound,fr,k} with $k=0$, we use \eqref{equ:Afb,pointwise}--\eqref{equ:pointwise estimate on fb} to obtain
\begin{align*}
\chi(Nv) A[f_{\mathrm{b}}](t,x,v)\sim& \chi(Nv)\int_{\R^{3}}f_{\mathrm{b}}(t,x,u)du\\
\lesssim&M^{1-s}\chi(Nv)\sum_{i,j}\chi (\frac{MP_{e_{i,j}}^{\perp }x}{10})\chi (\frac{P_{e_{i,j}}x}{10})\n{I_{i,j}}_{L_{u}^{1}}\\
 \lesssim& M^{1-s}M^{-2}\chi(Nv)\sum_{i,j}\chi (\frac{MP_{e_{i,j}}^{\perp }x}{10})\chi (\frac{P_{e_{i,j}}x}{10})\lesssim M^{1-s}\chi(Nv).
\end{align*}
We note that the upper bounds of estimates \eqref{equ:pointwise estimate on fb,integral,final} and \eqref{equ:pointwise estimate on fb} remain true with the extra factor $M^{k}$ if $\nabla_x^k$
is applied, for $k\geq 0$. Therefore, we conclude the pointwise upper bound \eqref{equ:upper bound,fr,k} on $A[f_{\mathrm{b}}]$ for $k=0,1,2$.

For the lower bound \eqref{equ:upper lower bound,fr}, we observe that
\begin{align}\label{equ:lower bound,fb,sum}
\sum_{i,j}\chi (10MP_{e_{i,j}}^{\perp }x)\chi (10P_{e_{i,j}}x)\gtrsim M^{2},\quad \text{for $|x|\leq  \frac{1}{100M}$},
 \end{align}
Combining  \eqref{equ:pointwise estimate on fb,integral,final}, \eqref{equ:pointwise estimate on fb}, and \eqref{equ:lower bound,fb,sum}, we obtain
\begin{align*}
\chi(Nv) A[f_{\mathrm{b}}](t,x,v)\gtrsim & \chi(Nv)\int_{\R^{3}}f_{\mathrm{b}
}(t,x,u)du \\
\geq &\chi(Nv)M^{1-s} \sum_{i,j}\chi (10MP_{e_{i,j}}^{\perp }x)\chi (10P_{e_{i,j}}x)\n{I_{i,j}}_{L_{u}^{1}}\\
\gtrsim& M^{1-s} \chi(Nv),
\end{align*}
which completes the proof of \eqref{equ:upper lower bound,fr}.
\end{proof}

We now proceed to establish the Sobolev norm estimates for $f_{\mathrm{r}}$.
\begin{lemma}[Sobolev norm estimates for $f_{\mathrm{r}}$]\label{lemma:bounds on fr}
For $t\in[0,\frac{1}{4}]$, we have the upper bound
\begin{align}
&\n{e^{\lra{v}^{2}}f_{\mathrm{r}}(t)}_{L_{v}^{2}H_{x}^{s_{0}}}\lesssim  \exp[-t M^{1-s}],\label{equ:critical norm estimate for fr,upper bound}
\end{align}
and the lower bound
\begin{align}
&\n{f_{\mathrm{r}}(0)}_{L_{v}^{2}H_{x}^{s_{0}}}\gtrsim  1.\label{equ:critical norm estimate for fr,lower bound}
\end{align}
In particular, the following estimates hold
\begin{align}
\n{e^{\lra{v}^{2}}f_{\mathrm{r}}(0)}_{L_{v}^{2}H_{x}^{s_{0}}}\sim & \n{f_{\mathrm{r}}(0)}_{L_{v}^{2}H_{x}^{s_{0}}}\sim 1,\label{equ:upper bound,fr,t=0}\\
\n{e^{\lra{v}^{2}}f_{\mathrm{r}}(T_{*})}_{L_{v}^{2}H_{x}^{s_{0}}}\lesssim & \frac{1}{ \ln M},\label{equ:lower bound,fr,T}
\end{align}
where $T_{*}=M^{s-1}(\ln\ln M)$.
\end{lemma}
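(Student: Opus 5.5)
We work directly from the explicit formula \eqref{equ:fr}. Write $f_{\mathrm{r}}(t)=M^{\frac{3}{2}-s_{0}}N^{\frac{3}{2}}E(t,x,v)\chi(100Mx)\chi(Nv)$ with $E=\exp[-\int_{0}^{t}A[f_{\mathrm{b}}]d\tau]$. Since $|v|\leq N^{-1}$ on the support, the Gaussian weight $e^{\lra{v}^{2}}\sim 1$ and can be dropped throughout.

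\textbf{Time zero.} At $t=0$ we have $E=1$, so the function factorizes. The velocity factor gives $\n{N^{\frac32}\chi(Nv)}_{L_{v}^{2}}\sim 1$. For the spatial factor $g_{M}(x)=M^{\frac32-s_{0}}\chi(100Mx)$ we get $\n{g_{M}}_{L^{2}}\sim M^{-s_{0}}$ and $\n{g_{M}}_{\dot H^{1}}\sim M^{1-s_{0}}$. For the lower bound \eqref{equ:critical norm estimate for fr,lower bound} we use the Fourier side: $\widehat{g_{M}}(\xi)=M^{-\frac32-s_{0}}c\,\hat\chi(\xi/(100M))$, and on $|\xi|\sim M$ (with $|\hat\chi|\gtrsim 1$ there, after choosing $\chi$ with nonvanishing $\hat\chi$ near a suitable scale) the weight $\lra{\xi}^{2s_{0}}\sim M^{2s_{0}}$. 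This gives $\n{g_{M}}_{H^{s_{0}}}\gtrsim 1$. Scaling gives the matching upper bound, and together these yield \eqref{equ:upper bound,fr,t=0}.

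\textbf{Decay for $t>0$.} We interpolate, $\n{h}_{H^{s_{0}}}\lesssim \n{h}_{L^{2}}^{1-s_{0}}\n{h}_{H^{1}}^{s_{0}}$, applied for each fixed $v$ and then integrated in $v$. Hölder handles the $v$-integration: since $(1-s_0)+s_0=1$, we have $\int a^{1-s_{0}}b^{s_{0}}\leq(\int a)^{1-s_0}(\int b)^{s_0}$ with $a,b$ the squares.

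On the support, $|x|\leq(100M)^{-1}$ and $\chi(Nv)\neq0$. The lower bound \eqref{equ:upper lower bound,fr} then gives $A[f_{\mathrm{b}}](\tau,x,v)\gtrsim M^{1-s}$ for all $\tau\in[0,t]$, hence $0\leq E\leq e^{-ctM^{1-s}}$. This yields
\[
\n{f_{\mathrm{r}}(t)}_{L_{v}^{2}L_{x}^{2}}\lesssim M^{-s_{0}}e^{-ctM^{1-s}}.
\]

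For the gradient, $\nabla_{x}$ either falls on $\chi(100Mx)$, costing a factor $M$, or on $E$, producing $E\int_{0}^{t}\nabla_{x}A\,d\tau$. By \eqref{equ:upper bound,fr,k} with $k=1$, the latter is bounded by $tM^{2-s}E$. Using $tM^{1-s}e^{-ctM^{1-s}}\lesssim e^{-c'tM^{1-s}}$, this term is bounded by $M e^{-c'tM^{1-s}}$. Therefore
\[
\n{\nabla_x f_{\mathrm{r}}(t)}_{L^{2}}\lesssim M^{1-s_{0}}e^{-c'tM^{1-s}}.
\]
Interpolating gives \eqref{equ:critical norm estimate for fr,upper bound}, up to the constant $c'$ in the exponent, which is absorbed into the implied constants or into the definition of $\lesssim$ in the exponent.

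\textbf{Evaluation at $T_{*}$.} Plugging in $T_{*}=M^{s-1}\ln\ln M$ gives $e^{-c'\ln\ln M}=(\ln M)^{-c'}$. To reach exactly $1/\ln M$, we need the constant in the lower bound on $A$ to satisfy $c'\geq 1$. This can be arranged by adjusting the implicit normalization of $A$ (the collision constant), or else the statement is read with $\ln M$ replaced by a power of it. I expect this bookkeeping of the exponential constant to be the main delicate point. The other subtlety is the gradient of $E$, which is handled by the factor $tM^{1-s}$ being absorbed into the exponential.
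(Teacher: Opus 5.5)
Your upper-bound argument for \eqref{equ:critical norm estimate for fr,upper bound} is the paper's. You drop the Gaussian weight using $|v|\le N^{-1}$. You then get $\|f_{\mathrm{r}}(t)\|_{L^2_vL^2_x}\lesssim M^{-s_0}e^{-ctM^{1-s}}$ and $\|\nabla_x f_{\mathrm{r}}(t)\|_{L^2_vL^2_x}\lesssim M^{1-s_0}e^{-c'tM^{1-s}}$ from the two-sided pointwise bounds of Lemma \ref{lemma:pointwise estimate,fb}, absorbing the factor $tM^{1-s}$ from $\nabla_x\int_0^tA[f_{\mathrm{b}}]\,d\tau$ into the exponential, and you interpolate.

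You genuinely differ from the paper on the lower bound \eqref{equ:critical norm estimate for fr,lower bound}. The paper uses the embedding $H^{s_0}_x\hookrightarrow L^{6/(3-2s_0)}_x$ and computes $M^{\frac32-s_0}\|\chi(100M\cdot)\|_{L^{6/(3-2s_0)}}\sim 1$. You instead compute $\|M^{\frac32-s_0}\chi(100M\cdot)\|_{H^{s_0}}$ on the Fourier side after rescaling $\xi=100M\eta$. Your route gives both sides of $\sim 1$ in one computation, while the embedding route never touches $\hat\chi$. In your version no special choice of $\chi$ is needed. Since $\hat\chi$ is entire and not identically zero, $\int_{|\eta|\ge a}|\hat\chi(\eta)|^2\,d\eta>0$ for every $a>0$. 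That, together with $\langle 100M\eta\rangle^{2s_0}\gtrsim M^{2s_0}$ on $|\eta|\ge a$, is all you use.

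Your worry about the constant in the exponent is well founded. The paper's proof passes over it silently, writing $\exp[-2tM^{1-s}]$ straight from $A[f_{\mathrm{b}}]\gtrsim M^{1-s}$. Two of your proposed remedies do not work as stated:
\begin{itemize}
\item The discrepancy cannot be ``absorbed into the implied constants.'' The ratio $e^{-c'tM^{1-s}}/e^{-tM^{1-s}}=e^{(1-c')tM^{1-s}}$ is not bounded uniformly in $M$ on $[0,\frac14]$ when $c'<1$.
\item The collision kernel is fixed, so $A$ cannot be renormalized.
\end{itemize}
Any of the following fixes is legitimate:
\begin{itemize}
\item Put a large absolute constant $C_0$ in front of $f_{\mathrm{b}}$. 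Later estimates change only by powers of $C_0$, e.g.\ $\|e^{\langle v\rangle^2}f_{\mathrm{b}}\|_{L^2_vH^{s_0}_x}\lesssim C_0M^{s_0-s}$.
\item Take $T_*=C_1M^{s-1}\ln\ln M$ with $C_1c'\ge 1$. The iteration in Proposition \ref{lemma:perturbation} tolerates the resulting $C_1\sqrt{\ln M}\ln\ln M$ steps.
\item Accept the weaker bound $(\ln M)^{-c'}$. Theorem \ref{thm:main theorem} only needs $\|f_{\mathrm{r}}(T_*)\|_{L^2_vH^{s_0}_x}\to 0$; its $1/\ln M$ closeness at $t=0$ comes from $f_{\mathrm{b}}(0)$ alone.
\end{itemize}
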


\begin{proof}
Recall that
\begin{equation*}
f_{\mathrm{r}}(t,x,v)=M^{\frac{3}{2}-s_{0}}N^{\frac{3}{2}}
\exp \lrc{-\int_{0}^{t}A[f_{\mathrm{b}}](\tau,x,v)d\tau} \chi (100Mx)\chi (
Nv).
\end{equation*}
Since $f_{\mathrm{r}}$ has uniform compact support in $v$, the Gaussian weight $e^{\lra{v}^{2}}$ is uniformly bounded and can therefore be omitted from the subsequent estimates.

To establish the upper bound \eqref{equ:critical norm estimate for fr,upper bound}, we first derive the estimate for $\n{\nabla_{x}f_{\mathrm{r}}}_{L_{v}^{2}L_{x}^{2}}$.
Applying the pointwise upper bound \eqref{equ:upper bound,fr,k} and the lower bound \eqref{equ:upper lower bound,fr} for $A[f_{\mathrm{b}}]$, we get
\begin{align}\label{equ:upper bound,fr,sobolev}
	&\n{e^{\lra{v}^{2}}\nabla_{x}f_{\rm{r}}(t)}_{L_{v}^{2}L_{x}^{2}}\\
	\leq&M^{1+\frac{3}{2}-s_{0}}N^{\frac{3}{2}}\bbn{\exp\lrc{
-\int_{0}^{t}A[f_{\mathrm{b}}](\tau,x,v)d\tau}\chi(100Mx)\chi(Nv)}_{L_{v}^{2}L_{x}^{2}}\notag\\ &+M^{\frac{3}{2}-s_{0}}N^{\frac{3}{2}}\bbn{\nabla_{x}A[f_{\mathrm{b}}](\tau,x,v)\exp\lrc{
-\int_{0}^{t}A[f_{\mathrm{b}}](\tau,x,v)d\tau}\chi(100Mx)\chi(Nv)}_{L_{v}^{2}L_{x}^{2}}\notag\\
	\lesssim&
	M^{1+\frac{3}{2}-s_{0}}N^{\frac{3}{2}}\exp[-2tM^{1-s}]\n{(\nabla\chi)(100Mx)}_{L_{x}^{2}}\n{\chi(Nv)}_{L_{v}^{2}}\notag\\
	&+M^{\frac{3}{2}-s_{0}}N^{\frac{3}{2}}\lra{tM^{1+1-s}}\exp[-2tM^{1-s}]\n{\chi(100Mx)}_{L_{x}^{2}}\n{\chi(Nv)}_{L_{v}^{2}}\notag\\
	\lesssim& M^{1-s_{0}}\exp[-t M^{1-s}].\notag
\end{align}
In the same manner, the $L^{2}$ norm is controlled by
\begin{align}\label{equ:upper bound,fr,sobolev,L2}
\n{e^{\lra{v}^{2}}f_{\mathrm{r}}(t)}_{L_{v}^{2} L_{x}^{2}}\lesssim M^{-s_{0}}\exp[-t M^{1-s}].
\end{align}
Applying the interpolation inequality between these two estimates \eqref{equ:upper bound,fr,sobolev}--\eqref{equ:upper bound,fr,sobolev,L2} yields
\begin{align*}
\n{e^{\lra{v}^{2}}f_{\mathrm{r}}(t)}_{L_{v}^{2} H_x^{s_{0}}}\leq& \n{e^{\lra{v}^{2}}f_{\mathrm{r}}(t)}_{L_{v}^{2} H_{x}^{1}}^{s_{0}}
\n{e^{\lra{v}^{2}}f_{\mathrm{r}}(t)}_{L_{v}^{2} L_{x}^{2}}^{1-s_{0}}
\lesssim
 \exp[-t M^{1-s}],
\end{align*}
which completes the proof of \eqref{equ:critical norm estimate for fr,upper bound}.

For the lower bound estimate \eqref{equ:critical norm estimate for fr,lower bound}, we employ the Sobolev inequality to get
\begin{align*}
\n{\lra{\nabla_{x}}^{s_{0}}f_{\mathrm{r}}(0,x,v)}_{L_{v}^{2}L_{x}^{2}}
\gtrsim&\n{f_{\mathrm{r}}(0,x,v)}_{L_{v}^{2}L_{x}^{\frac{6}{3-2s_{0}}}}\\
\gtrsim&M^{\frac{3}{2}-s_{0}}N^{\frac{3}{2}}\n{ \chi (100Mx)\chi (
Nv)}_{L_{v}^{2}L_{x}^{\frac{6}{3-2s_{0}}}}\\
\gtrsim& M^{\frac{3}{2}-s_{0}}N^{\frac{3}{2}} \n{\chi(100Mx)}_{L_{x}^{\frac{6}{3-2s_{0}}}}\n{\chi(Nv)}_{L_{v}^{2}}\\
\gtrsim&1 .
\end{align*}
Hence, we have done the proof of estimate $(\ref{equ:critical norm estimate for fr,lower bound})$.

Finally, setting $t=0$ and $T_{*}= M^{s-1}(\ln\ln M)$ in \eqref{equ:critical norm estimate for fr,upper bound}--\eqref{equ:critical norm estimate for fr,lower bound}, we arrive at the desired estimates \eqref{equ:upper bound,fr,t=0}--\eqref{equ:lower bound,fr,T}.
\end{proof}

Finally, we conclude the norm deflation property for the approximate solution $f_{\mathrm{a}}$.
\begin{proposition}[Norm deflation property for $f_{\mathrm{a}}$]\label{lemma:norm deflation of fa}
Let $T_{*}= M^{s-1}(\ln\ln M)$. Then, we have
\begin{align}
	\n{e^{\lra{v}^{2}}f_{\mathrm{a}}(0)}_{L_v^{2} H_x^{s_{0}}}\sim&  1,\label{equ:critical norm estimate for fa,t=0}\\
	\n{e^{\lra{v}^{2}}f_{\mathrm{a}}(T_{*})}_{L_v^{2} H_x^{s_{0}}}\lesssim& \frac{1}{\ln M}.\label{equ:critical norm estimate for fa,t=T}
\end{align}
\end{proposition}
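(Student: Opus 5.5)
The plan is to use the decomposition $f_{\mathrm{a}}=f_{\mathrm{r}}+f_{\mathrm{b}}$ together with the triangle inequality in the weighted space $e^{\lra{v}^{2}}L_{v}^{2}H_{x}^{s_{0}}$. The two preceding lemmas then give both claims, provided the various powers of $M$ compare correctly. Throughout I use the parameter choice \eqref{equ:condition,s,s0}. In particular $s>s_{0}$, so $M^{s_{0}-s}\to 0$ polynomially in $M$. Also $s<1$, so $T_{*}=M^{s-1}\ln\ln M\le \frac14$ for $M\gg1$, and Lemmas \ref{lemma:Hs,bounds on fb} and \ref{lemma:bounds on fr} apply at both $t=0$ and $t=T_{*}$.

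For \eqref{equ:critical norm estimate for fa,t=0}, I will first get the upper bound:
\begin{align*}
\n{e^{\lra{v}^{2}}f_{\mathrm{a}}(0)}_{L_v^{2}H_x^{s_{0}}}\le \n{e^{\lra{v}^{2}}f_{\mathrm{r}}(0)}_{L_v^{2}H_x^{s_{0}}}+\n{e^{\lra{v}^{2}}f_{\mathrm{b}}(0)}_{L_v^{2}H_x^{s_{0}}}\lesssim 1+M^{s_{0}-s}\lesssim 1,
\end{align*}
using \eqref{equ:upper bound,fr,t=0} and \eqref{equ:critical norm estimate for fb}. For the lower bound I will use $e^{\lra{v}^{2}}\ge 1$ and the reverse triangle inequality:
\begin{align*}
\n{e^{\lra{v}^{2}}f_{\mathrm{a}}(0)}_{L_v^{2}H_x^{s_{0}}}\ge \n{f_{\mathrm{r}}(0)}_{L_v^{2}H_x^{s_{0}}}-\n{e^{\lra{v}^{2}}f_{\mathrm{b}}(0)}_{L_v^{2}H_x^{s_{0}}}\gtrsim 1-CM^{s_{0}-s}\gtrsim 1,
\end{align*}
by \eqref{equ:critical norm estimate for fr,lower bound}, for $M$ large. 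The same argument also shows the unweighted norm of $f_{\mathrm{a}}(0)$ is $\sim 1$, as promised in condition (a) of the outline.

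For \eqref{equ:critical norm estimate for fa,t=T}, I will again split:
\begin{align*}
\n{e^{\lra{v}^{2}}f_{\mathrm{a}}(T_{*})}_{L_v^{2}H_x^{s_{0}}}\lesssim \exp\lrc{-T_{*}M^{1-s}}+M^{s_{0}-s}=\frac{1}{\ln M}+M^{s_{0}-s}.
\end{align*}
This uses \eqref{equ:critical norm estimate for fr,upper bound} and the identity $T_{*}M^{1-s}=\ln\ln M$, i.e.\ \eqref{equ:lower bound,fr,T}, together with \eqref{equ:critical norm estimate for fb} at $t=T_{*}$. Since $M^{s_{0}-s}=o(1/\ln M)$, the bound $\lesssim 1/\ln M$ follows.

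There is no genuine obstacle here, since all the analytic work sits in Lemmas \ref{lemma:pointwise estimate,fb} and \ref{lemma:bounds on fr}. The only point needing care is the lower bound at $t=0$. It requires the constant in the $f_{\mathrm{r}}(0)$ lower bound to dominate the $f_{\mathrm{b}}$ contribution, which is ensured by the strict gap $s>s_{0}$ and taking $M$ sufficiently large.
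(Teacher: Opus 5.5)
Your proof is correct and follows the paper's argument: combine Lemma \ref{lemma:Hs,bounds on fb}, which gives the $M^{s_{0}-s}\lesssim \frac{1}{\ln M}$ bound for $f_{\mathrm{b}}$, with Lemma \ref{lemma:bounds on fr} evaluated at $t=0$ and $t=T_{*}$. The only difference is in how the two pieces are combined. The paper uses the disjoint velocity supports of $f_{\mathrm{r}}$ ($|v|\le N^{-1}$) and $f_{\mathrm{b}}$ ($\frac{3}{4}\le|v|\le\frac{5}{4}$), so that $\n{e^{\lra{v}^{2}}f_{\mathrm{a}}}_{L_v^{2}H_x^{s_{0}}}^{2}$ is exactly the sum of the two squared norms and the lower bound at $t=0$ needs no smallness of $f_{\mathrm{b}}$; you use the (reverse) triangle inequality together with $M^{s_{0}-s}\to 0$, which works equally well.
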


\begin{proof}
Since $f_{\mathrm{b}}$ and $f_{\mathrm{r}}$ have disjoint velocity supports, the norm of their sum satisfies
\begin{align*}
\n{e^{\lra{v}^{2}}f_{\mathrm{a}}(t)}_{L_v^{2} H_x^{s_{0}}}^{2}\sim \n{e^{\lra{v}^{2}}f_{\mathrm{r}}(t)}_{L_v^{2} H_x^{s_{0}}}^{2}+\n{e^{\lra{v}^{2}}f_{\mathrm{b}}(t)}_{L_v^{2} H_x^{s_{0}}}^{2}.
\end{align*}
The estimate for $f_{\mathrm{b}}$ is provided by Lemma \ref{lemma:Hs,bounds on fb}, specifically
\begin{align*}
\sup_{t\in[0,T_{*}]}\n{e^{\lra{v}^{2}}f_{\mathrm{b}}(t,x,v)}_{L_{v}^{2}H_{x}^{s_{0}}}\lesssim M^{s_{0}-s}\lesssim \frac{1}{\ln M}.
\end{align*}
For $f_{\mathrm{r}}$, from Lemma \ref{lemma:bounds on fr} we have
\begin{align*}
\n{e^{\lra{v}^{2}}f_{\mathrm{r}}(0)}_{L_{v}^{2}H_{x}^{s_{0}}}\sim & \n{f_{\mathrm{r}}(0)}_{L_{v}^{2}H_{x}^{s_{0}}}\sim 1,\\
\n{e^{\lra{v}^{2}}f_{\mathrm{r}}(T_{*})}_{L_{v}^{2}H_{x}^{s_{0}}}\lesssim & \frac{1}{\ln M}.
\end{align*}
Combining these above bounds, we directly obtain \eqref{equ:critical norm estimate for fa,t=0} and \eqref{equ:critical norm estimate for fa,t=T}.
\end{proof}

\section{Space-time Collision Estimates in the Gaussian-weighted $Z$-Norm}\label{section:Space-time Collision}
A direct calculation shows that the correction term $f_{\mathrm{c}}$ satisfies
\begin{equation}\label{equ:correction term,fc}
\left\{
\begin{aligned} \partial_t f_{\mathrm{c}} + v\cdot \nabla_x f_{\mathrm{c}} = & \pm Q^\pm(f_{\mathrm{c}},f_{ \mathrm{a}}) \pm
Q^\pm(f_{\mathrm{a}},f_{\mathrm{c}}) \pm Q^\pm(f_{\mathrm{c}},f_{\mathrm{c}
})-F_{\text{err}},\\
F_{\mathrm{err}}=&\pa_{t}f_{\mathrm{a}}+v\cdot \nabla_{x}f_{\mathrm{a}}+Q^{-}(f_{\mathrm{a}},f_{\mathrm{a}})-Q^{+}(f_{\mathrm{a}},f_{\mathrm{a}}).
 \end{aligned}
 \right.
\end{equation}
To construct the exact solution as a perturbation of the approximate solution, one relies on the well-posedness theory for the equation \eqref{equ:correction term,fc}. More importantly, we must prove that $f_{\mathrm{c}}(t)$ can be extended up to the time $T_{*}$ when the ill-posedness behavior occurs.
Usually, to deal with this problem, one expects a time-independent bilinear estimate of the form
\begin{align}\label{equ:bilinear estimate,time-independent}
\n{Q^{\pm}(f,g)}_{Z}\lesssim \n{f}_{Z}\n{g}_{Z}.
\end{align}
Unfortunately, because of the loss of velocity weight inherent in the hard-sphere collision kernel, such an inequality like \eqref{equ:bilinear estimate,time-independent} generally fails to hold. To circumvent this obstacle, we instead establish Gaussian-weighted $Z$-norm collision estimates in time-integral form.

The choice of $Z$-norm requires a delicate balance. On one hand, the norm must be sufficiently strong to close the bilinear space-time estimates to control the Sobolev norm $L_{v}^{2}H_{x}^{s_{0}}$. On the other hand, it needs to be as weak as possible to ensure that the
$Z$-norm of the error terms remains uniformly controllable in parameter $M$. This is crucial because the higher regularity norms of both $f_{\mathrm{a}}^{M}$ and the error term $F_{\mathrm{err}}^{M}$ exhibit rapid growth in $M$.
Here, the $Z$-norm we construct is given by
\begin{align}\label{equ:Z-norm}
\n{f}_{Z}:=M^{s_{0}}\n{f}_{L_{v}^{2,1}L_{x}^{2}}+M^{s_{0}-1}\n{\nabla_{x}f}_{L_{v}^{2,1}L_{x}^{2}}+\n{f}_{L_{v}^{1,1}L_{x}^{\infty}}+
M^{-1}\n{\nabla_{x}f}_{L_{v}^{1,1}L_{x}^{\infty}},
\end{align}
and $\n{f}_{L_{v}^{p,1}}=\n{\lra{v}f}_{L_{v}^{p}}$.

The main estimates in this section are summarized in Proposition \ref{lemma:Z,norm,bilinear,estimate}.
While space-time estimates of the form \eqref{equ:Z,norm,bilinear,estimate} are sufficient to establish local well-posedness, our final goal is to demonstrate that the correction term $f_{\mathrm{c}}(t)$ persists up to the moment when ill-posed behavior emerges. In the context, the time integration is employed to compensate for the loss of velocity weight. Consequently, relying solely on the smallness of the remaining time interval $(t-t_{0})^{\frac{1}{2}}$  is insufficient for such a purpose. To resolve this and ensure the stability analysis of the iterative framework in Section \ref{section:Stability Analysis for the Approximate Solutions via an Iterative Scheme}, we introduce a rate parameter
$\kappa$ to provide further smallness.

\begin{proposition}\label{lemma:Z,norm,bilinear,estimate}
 Let $0\leq t_{0}\leq t\leq \frac{1}{\kappa}$. Then the following time-dependent estimate in time-integral form holds
\begin{align}\label{equ:Z,norm,bilinear,estimate}
&\bbn{e^{(1-\kappa t)\lra{v}^{2}}\int_{t_{0}}^{t}e^{-(t-\tau)v\cdot \nabla_{x}}Q^{\pm}(f,g)(\tau)d\tau}_{Z}\\
\lesssim & \lrs{\frac{t-t_{0}}{\kappa}}^{\frac{1}{2}}\n{e^{(1-\kappa \tau)\lra{v}^{2}}f}_{L_{\tau}^{\infty}(t_{0},t;Z)}
\n{e^{(1-\kappa \tau)\lra{v}^{2}}g}_{L_{\tau}^{\infty}(t_{0},t;Z)}.\notag
\end{align}
For the time-independent case, with a loss of velocity weight, we have
\begin{align}
\n{e^{\lra{v}^{2}}Q^{\pm}(f,g)}_{Z}\lesssim \n{\lra{v}e^{\lra{v}^{2}}f}_{Z}
\lrs{\n{\lra{v}e^{\lra{v}^{2}}g}_{L_{v}^{1,1}L_{x}^{\infty}}
+M^{-1}\n{\lra{v}e^{\lra{v}^{2}}\nabla_{x}g}_{L_{v}^{1,1}L_{x}^{\infty}}},\label{equ:Z,norm,bilinear,estimate,time-independent}\\
\n{e^{\lra{v}^{2}}Q^{+}(f,g)}_{Z}\lesssim
\lrs{\n{\lra{v}e^{\lra{v}^{2}}f}_{L_{v}^{1,1}L_{x}^{\infty}}
+M^{-1}\n{\lra{v}e^{\lra{v}^{2}}\nabla_{x}f}_{L_{v}^{1,1}L_{x}^{\infty}}}\n{\lra{v}e^{\lra{v}^{2}}g}_{Z}.
\label{equ:Z,norm,bilinear,estimate,time-independent,gZ}
\end{align}
\end{proposition}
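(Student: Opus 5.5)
The plan is to reduce everything to a pointwise-in-time bilinear estimate that loses one power of $\lra{v}$ and then absorb that loss using the decaying Gaussian weight in the time integral.

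\textbf{Step 1: time-independent estimates.} We first prove \eqref{equ:Z,norm,bilinear,estimate,time-independent}--\eqref{equ:Z,norm,bilinear,estimate,time-independent,gZ}. Energy conservation $|v|^{2}+|u|^{2}=|v^{*}|^{2}+|u^{*}|^{2}$ gives $e^{\lra{v}^{2}}\le e^{\lra{v^{*}}^{2}}e^{\lra{u^{*}}^{2}}$ for the gain term. For the loss term we use the trivial bound $e^{\lra{v}^{2}}\le e^{\lra{v}^{2}}e^{\lra{u}^{2}}$. Hence the Gaussian weight can be moved onto $f$ and $g$, and it suffices to prove unweighted bounds.

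We then use $|(u-v)\cdot\om|\le \lra{u}\lra{v}$. For the loss term,
\begin{align*}
|Q^{-}(f,g)|\lesssim \lra{v}|f(v)|\int\lra{u}|g(u)|du .
\end{align*}
We take $\sup_x$ of the $u$-integral, which is controlled by $\n{\lra{v}g}_{L_{v}^{1,1}L_{x}^{\infty}}$, and estimate $f$ in each of the four components of $Z$. The $\nabla_x$ pieces follow from the Leibniz rule: when $\nabla_x$ falls on $g$ we use the $M^{-1}\nabla_x g$ term in $L^{1,1}_vL^\infty_x$.

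For the gain term we rely on the standard $L^p$ estimates of $Q^{+}$ for a kernel bounded by $\lra{u}\lra{v}$. These are Young-type inequalities obtained by exploiting the change of variables $(v,u)\mapsto(v^{*},u^{*})$, which has unit Jacobian. Specifically, $\n{Q^{+}(f,g)}_{L_{v}^{p,1}}\lesssim \n{f}_{L_{v}^{p,2}}\n{g}_{L_{v}^{1,2}}$ and the symmetric version. Applied pointwise in $x$ (with $L^\infty_x$ or $L^2_x$ via Minkowski), this gives both \eqref{equ:Z,norm,bilinear,estimate,time-independent} and \eqref{equ:Z,norm,bilinear,estimate,time-independent,gZ}. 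The extra weight $\lra{v}$ appearing on the right is exactly the loss coming from the hard-sphere kernel.

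\textbf{Step 2: absorbing the weight loss via time integration.} Set $F(\tau)=e^{(1-\kappa\tau)\lra{v}^2}f(\tau)$ and similarly $G$. The transport $e^{-(t-\tau)v\cdot\nabla_x}$ commutes with multiplication by functions of $v$, and it preserves every component of $Z$: each component is of the form $L^{p}_v$ of a translation-invariant $x$-norm, and $\nabla_x$ commutes with the transport. So by Minkowski,
\begin{align*}
\text{LHS}\le \int_{t_0}^{t}\bbn{e^{(1-\kappa t)\lra{v}^{2}}Q^{\pm}(f,g)(\tau)}_{Z}d\tau .
\end{align*}
Redoing Step 1 with weight $e^{(1-\kappa t)\lra v^2}$ on the left and $e^{(1-\kappa\tau)\lra{\cdot}^{2}}$ on $f,g$, conservation of energy is still usable since $1-\kappa t\le 1-\kappa\tau$ and $1-\kappa t\ge0$. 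This leaves an excess factor $e^{-\kappa(t-\tau)(\lra{v^*}^2+\lra{u^*}^2)}$ (respectively in $v,u$), which we may split as $e^{-\kappa(t-\tau)\lra{\cdot}^2}$ on each input.

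The point is that
\begin{align*}
\lra{v}e^{-\kappa(t-\tau)\lra{v}^{2}}\lesssim (\kappa(t-\tau))^{-1/2}.
\end{align*}
This cancels the one-power weight loss in Step 1 for each of $f$ and $g$ wherever it appears. The integrand is therefore bounded by $(\kappa(t-\tau))^{-1/2}\n{F}_{L^\infty Z}\n{G}_{L^\infty Z}$. Integrating, $\int_{t_0}^{t}(\kappa(t-\tau))^{-1/2}d\tau\sim((t-t_0)/\kappa)^{1/2}$, which gives \eqref{equ:Z,norm,bilinear,estimate}.

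\textbf{Main obstacle.} The delicate point is whether each input needs the full $\lra{\cdot}$ gain, or only one input. The kernel bound $\lra u\lra v$, combined with the $L^{p,1}$ weights already present in $Z$, suggests that two powers are lost in total. Using two Gaussian factors would give $(\kappa(t-\tau))^{-1}$, which is not integrable.

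To avoid this, I will use the sharper bound $|(u-v)\cdot\om|\le|u|+|v|$, so only one weight is lost per term. I will then place the resulting single $\lra{\cdot}$ on whichever input carries the Gaussian gain. For $Q^+$ this requires $\lra{v^*}+\lra{u^*}\gtrsim|u-v|$ to distribute the loss, with the $L^{p,1}$ weight of the output $\lra v\le\lra{v^*}+\lra{u^*}$ absorbed into the existing weights. Checking that this bookkeeping closes in all four components of $Z$, especially the $L^{2,1}_vL^2_x$ part of the gain term, is where I expect the real work to be.
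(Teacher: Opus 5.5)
\textbf{There is a gap, in Step 2.} You push the whole weight $e^{(1-\kappa t)\langle v\rangle^{2}}$ through energy conservation first, so the surplus Gaussian $e^{-\kappa(t-\tau)(\langle v^{*}\rangle^{2}+\langle u^{*}\rangle^{2})}$ lands on the inputs. You then try to cancel the Step 1 losses on both $f$ and $g$. As you note yourself, that produces $(\kappa(t-\tau))^{-1}$, which is not integrable. So the sentence claiming the integrand is bounded by $(\kappa(t-\tau))^{-1/2}\|F\|_{L^\infty Z}\|G\|_{L^\infty Z}$ is unjustified, and the repair in your last paragraph is announced rather than carried out.

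The count of ``two lost powers'' is a misdiagnosis. The growth of the kernel is not a loss relative to $Z$: the bound $\|Q^{\pm}(F,G)\|_{L^{r}_{v}}\lesssim\|F\|_{L^{r,1}_{v}}\|G\|_{L^{1,1}_{v}}$ for $r=1,2$ is paid for exactly by the input weights $Z$ already contains. The only unpaid weight is the single output weight $\langle v\rangle$ in the $L^{2,1}_v$ and $L^{1,1}_v$ components. The time-independent bound \eqref{equ:Z,norm,bilinear,estimate,time-independent} puts $\langle\cdot\rangle$ on both inputs only because it pays for that one output weight crudely, through $\langle v\rangle\le\langle v^{*}\rangle\langle u^{*}\rangle$. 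That makes it the wrong template for Step 2.

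\textbf{The missing step, which is how the paper argues, is to leave the surplus at the output velocity.} Write $e^{(1-\kappa t)\langle v\rangle^{2}}=e^{-\kappa(t-\tau)\langle v\rangle^{2}}\,e^{(1-\kappa\tau)\langle v\rangle^{2}}$ and move only the second factor onto $f,g$. This uses energy conservation for $Q^{+}$, is trivial for $Q^{-}$, and needs $1-\kappa\tau\ge 0$. Then $e^{-\kappa(t-\tau)\langle v\rangle^{2}}\lesssim(\kappa(t-\tau))^{-1/2}\langle v\rangle^{-1}$ cancels precisely the output weight. Minkowski in $x$ ($L^{2}_x$ or $L^{\infty}_x$ on $f$, $L^{\infty}_x$ on $g$) together with the bound above for $r=2$ and $r=1$ then controls every component of $Z$ with a single factor $(\kappa(t-\tau))^{-1/2}$. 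The $\nabla_x$ components follow by Leibniz, and $\int_{t_0}^{t}(\kappa(t-\tau))^{-1/2}\,d\tau\lesssim((t-t_0)/\kappa)^{1/2}$ gives the claim.

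Your remedy can also be completed. Use $\langle v\rangle|(u-v)\cdot\omega|\lesssim(\langle v^{*}\rangle^{2}+\langle u^{*}\rangle^{2})|\cos\theta|$, where $\theta$ is the angle between $\omega$ and $u-v$. Each term then carries one surplus power on a single input, which the Gaussian sitting on that input absorbs. This route, however, requires $L^{2}$ bounds for the stripped gain operator. Keep the angular factor $|\cos\theta|$ rather than bounding $|(u-v)\cdot\omega|$ by $|u|+|v|$ outright, so that the standard Maxwell-type theory applies. The output-side placement needs nothing beyond the standard estimate.
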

\begin{proof}
For notational brevity, we denote
\begin{align*}
N^{\pm}[f,g](t)=\int_{t_{0}}^{t}e^{-(t-\tau)v\cdot \nabla_{x}}Q^{\pm}(f,g)(\tau)d\tau.
\end{align*}
Applying H\"{o}lder's inequality in time and using the non-negativity of the collision operator, we obtain the pointwise estimate
\begin{align}\label{equ:space-time,pointwise,N}
&\bbn{e^{(1-\kappa t)\lra{v}^{2}}
N^{\pm}[f,g](t)}_{L_{x}^{p}}\\
\leq &
\int_{t_{0}}^{t}e^{(1-\kappa t)\lra{v}^{2}}\n{Q^{\pm}(f,g)(\tau)}_{L_{x}^{p}}d\tau\notag\\
\leq &\int_{t_{0}}^{t}e^{(1-\kappa t)\lra{v}^{2}}Q^{\pm}(\n{f}_{L_{x}^{p}},\n{g}_{L_{x}^{\infty}})d\tau\notag\\
= &\int_{t_{0}}^{t}e^{\kappa (\tau-t)\lra{v}^{2}}e^{(1-\kappa \tau)\lra{v}^{2}}Q^{\pm}(\n{f}_{L_{x}^{p}},\n{g}_{L_{x}^{\infty}})(\tau)d\tau\notag\\
\lesssim &
\int_{t_{0}}^{t}\frac{1}{\sqrt{\ka(t-\tau)}\lra{v}}Q^{\pm}\lrs{e^{(1-\kappa \tau)\lra{v}^{2}}\n{f}_{L_{x}^{p}},e^{(1-\kappa \tau)\lra{v}^{2}}\n{g}_{L_{x}^{\infty}}}(\tau)d\tau,\notag
\end{align}
where in the last inequality we have used that $e^{-a\lra{v}^{2}}\sqrt{a}\lra{v}\lesssim 1 $ and the energy conservation law which yields the inequality that
$e^{\lra{v}^{2}}\lesssim e^{\lra{v^{*}}^{2}+\lra{u^{*}}^{2}}$.

We now turn to the estimates in the $Z$-norm.

\textbf{$M^{s_{0}}\n{\cdot}_{L_{v}^{2}L_{x}^{2}}$ norm estimate}.

Taking $p=2$ in \eqref{equ:space-time,pointwise,N} and employing the collision estimates \eqref{equ:collision,estimate,Q-,Lr}--\eqref{equ:collision,estimate,Q+,Lr} with \(r=2\), we obtain
\begin{align}\label{equ:Z,norm,bilinear,L2}
&M^{s_{0}}\bbn{e^{(1-\kappa t)\lra{v}^{2}}
N^{\pm}[f,g](t)}_{L_{v}^{2,1}L_{x}^{2}}\\
\lesssim& M^{s_{0}}\int_{t_{0}}^{t}\frac{1}{\sqrt{\ka(t-\tau)}}
\bbn{Q^{\pm}\lrs{e^{(1-\kappa \tau)\lra{v}^{2}}\n{f}_{L_{x}^{2}},e^{(1-\kappa \tau)\lra{v}^{2}}\n{g}_{L_{x}^{\infty}}}(\tau)}_{L_{v}^{2}}d\tau\notag\\
\lesssim& M^{s_{0}}\int_{t_{0}}^{t}\frac{1}{\sqrt{\ka(t-\tau)}}
\bbn{e^{(1-\ka \tau)\lra{v}^{2}}\n{f}_{L_{x}^{2}}}_{L_{v}^{2,1}}
\bbn{e^{(1-\ka \tau)\lra{v}^{2}}\n{g}_{L_{x}^{\infty}}}_{L_{v}^{1,1}}d\tau\notag\\
\lesssim& \lrs{\frac{t-t_{0}}{\kappa}}^{\frac{1}{2}}
M^{s_{0}}\n{e^{(1-\kappa \tau)\lra{v}^{2}}f}_{L_{\tau}^{\infty}(t_{0},t;L_{v}^{2,1}L_{x}^{2})}
\n{e^{(1-\kappa \tau)\lra{v}^{2}}g}_{L_{\tau}^{\infty}(t_{0},t;L_{v}^{1,1}L_{x}^{\infty})}.\notag
\end{align}

\textbf{$M^{s_{0}-1}\n{\nabla_{x}\cdot}_{L_{v}^{2,1}L_{x}^{2}}$ norm estimate}.

Applying the Leibniz rule and estimate \eqref{equ:Z,norm,bilinear,L2}, we have
\begin{align}
&M^{s_{0}-1}\bbn{e^{(1-\kappa t)\lra{v}^{2}}\nabla_{x}
N^{\pm}[f,g](t)}_{L_{v}^{2,1}L_{x}^{2}}\\
=&M^{s_{0}-1}\bbn{e^{(1-\kappa t)\lra{v}^{2}}
N^{\pm}[\nabla_{x}f,g](t)}_{L_{v}^{2,1}L_{x}^{2}}+M^{s_{0}-1}\bbn{e^{(1-\kappa t)\lra{v}^{2}}
N^{\pm}[f,\nabla_{x}g](t)}_{L_{v}^{2,1}L_{x}^{2}}\notag\\
\lesssim& \lrs{\frac{t-t_{0}}{\kappa}}^{\frac{1}{2}}M^{s_{0}-1}
\n{e^{(1-\kappa \tau)\lra{v}^{2}}\nabla_{x}f}_{L_{\tau}^{\infty}(t_{0},t;L_{v}^{2,1}L_{x}^{2})}
\n{e^{(1-\kappa \tau)\lra{v}^{2}}g}_{L_{\tau}^{\infty}(t_{0},t;L_{v}^{1,1}L_{x}^{\infty})}\notag\\
&+\lrs{\frac{t-t_{0}}{\kappa}}^{\frac{1}{2}}
M^{s_{0}}\n{e^{(1-\kappa \tau)\lra{v}^{2}}f}_{L_{\tau}^{\infty}(t_{0},t;L_{v}^{2,1}L_{x}^{2})}
M^{-1}\n{e^{(1-\kappa \tau)\lra{v}^{2}}\nabla_{x}g}_{L_{\tau}^{\infty}(t_{0},t;L_{v}^{1,1}L_{x}^{\infty})}.\notag
\end{align}

\textbf{$\n{\cdot}_{L_{v}^{1,1}L_{x}^{\infty}}$ norm estimate}.

Using estimate \eqref{equ:space-time,pointwise,N} with $p=\infty$, and collision estimates \eqref{equ:collision,estimate,Q-,Lr}--\eqref{equ:collision,estimate,Q+,Lr} with $r=1$, we have
\begin{align}\label{equ:Z,norm,bilinear,L1}
&\bbn{e^{(1-\kappa t)\lra{v}^{2}}
N^{\pm}[f,g](t)}_{L_{v}^{1,1}L_{x}^{\infty}}\\
\lesssim& \int_{t_{0}}^{t}\frac{1}{\sqrt{\ka(t-\tau)}}
\bbn{Q^{\pm}\lrs{e^{(1-\kappa \tau)\lra{v}^{2}}\n{f}_{L_{x}^{\infty}},e^{(1-\kappa \tau)\lra{v}^{2}}\n{g}_{L_{x}^{\infty}}}(\tau)}_{L_{v}^{1}}d\tau \notag\\
\lesssim& \int_{t_{0}}^{t}\frac{1}{\sqrt{\ka(t-\tau)}}
\bbn{e^{(1-\ka \tau)\lra{v}^{2}}\n{f}_{L_{x}^{\infty}}}_{L_{v}^{1,1}}
\bbn{e^{(1-\ka \tau)\lra{v}^{2}}\n{g}_{L_{x}^{\infty}}}_{L_{v}^{1,1}}d\tau\notag\\
\lesssim& \lrs{\frac{t-t_{0}}{\kappa}}^{\frac{1}{2}}\n{e^{(1-\kappa \tau)\lra{v}^{2}}f}_{L_{\tau}^{\infty}(t_{0},t;L_{v}^{1,1}L_{x}^{\infty})}\n{e^{(1-\kappa \tau)\lra{v}^{2}}g}_{L_{\tau}^{\infty}(t_{0},t;L_{v}^{1,1}L_{x}^{\infty})}.\notag
\end{align}

\textbf{$M^{-1}\n{\nabla_{x}\cdot}_{L_{v}^{1,1}L_{x}^{\infty}}$ norm estimate}.

Finally, applying the Leibniz rule and using \eqref{equ:Z,norm,bilinear,L1}, we derive
\begin{align}
&M^{-1}\bbn{e^{(1-\kappa t)\lra{v}^{2}}\nabla_{x}
N^{\pm}[f,g](t)}_{L_{v}^{1,1}L_{x}^{\infty}}\\
&M^{-1}\bbn{e^{(1-\kappa t)\lra{v}^{2}}
N^{\pm}[\nabla_{x}f,g](t)}_{L_{v}^{1,1}L_{x}^{\infty}}+M^{-1}\bbn{e^{(1-\kappa t)\lra{v}^{2}}
N^{\pm}[f,\nabla_{x}g](t)}_{L_{v}^{1,1}L_{x}^{\infty}}\notag\\
\lesssim& \lrs{\frac{t-t_{0}}{\kappa}}^{\frac{1}{2}}M^{-1}\n{e^{(1-\kappa \tau)\lra{v}^{2}}\nabla_{x}f}_{L_{\tau}^{\infty}(t_{0},t;L_{v}^{1,1}L_{x}^{\infty})}\n{e^{(1-\kappa \tau)\lra{v}^{2}}g}_{L_{\tau}^{\infty}(t_{0},t;L_{v}^{1,1}L_{x}^{\infty})}\notag\\
&+\lrs{\frac{t-t_{0}}{\kappa}}^{\frac{1}{2}}\n{e^{(1-\kappa \tau)\lra{v}^{2}}f}_{L_{\tau}^{\infty}(t_{0},t;L_{v}^{1,1}L_{x}^{\infty})}M^{-1}\n{e^{(1-\kappa \tau)\lra{v}^{2}}\nabla_{x}g}_{L_{\tau}^{\infty}(t_{0},t;L_{v}^{1,1}L_{x}^{\infty})}.\notag
\end{align}
Combining the four estimates above, we arrive at the desired estimate \eqref{equ:Z,norm,bilinear,estimate}.

The time-independent estimates \eqref{equ:Z,norm,bilinear,estimate,time-independent}--\eqref{equ:Z,norm,bilinear,estimate,time-independent,gZ} are obtained by a similar argument to that for \eqref{equ:Z,norm,bilinear,estimate}, with an extra factor
$\lra{v}$ introduced to compensate for the loss of velocity weight. Thus, we have completed the proof.

\end{proof}

\section{$Z$-norm Estimates for Error Terms}\label{section:Estimates for Error Terms}
This section is devoted to deriving the $Z$-norm estimates for the error term $F_{\mathrm{err}}$. We begin by recalling the error terms
\begin{align}\label{equ:Ferr,error term estimate}
F_{\text{err}}& =\partial _{t}f_{\mathrm{a}}+v\cdot \nabla
_{x}f_{\mathrm{a}}+Q^{-}(f_{\mathrm{a}},f_{\mathrm{a}})-Q^{+}(f_{\mathrm{a}},f_{
\mathrm{a}}) \\ & =v\cdot \nabla
_{x}\,f_{\mathrm{r}}-Q^{+}(f_{\mathrm{r}},f_{\mathrm{b}})\mp Q^{\pm}(f_{\mathrm{b}},f_{\mathrm{r}})\mp Q^{\pm}(f_{\mathrm{r}},f_{
\mathrm{r}})\mp Q^{\pm}(f_{\mathrm{b}},f_{\mathrm{b}}),\notag
\end{align}
and the $Z$-norm
\begin{align}\label{equ:Z-norm,error term estimate}
\n{f}_{Z}=M^{s_{0}}\n{f}_{L_{v}^{2,1}L_{x}^{2}}+M^{s_{0}-1}\n{\nabla_{x}f}_{L_{v}^{2,1}L_{x}^{2}}
+\n{f}_{L_{v}^{1,1}L_{x}^{\infty}}+
M^{-1}\n{\nabla_{x}f}_{L_{v}^{1,1}L_{x}^{\infty}}.
\end{align}

The main result of this section is summarized in the following proposition.
\begin{proposition}[$Z$-norm estimates for $F_{\text{err}}$]\label{lemma:bounds on ferr}
 For $0\leq  t_{0}\leq t \leq T_{*}$, we have
\begin{equation}\label{equ:Ferr_bound}
\bbn{e^{\lra{v}^{2}}\int_{t_{0}}^{t}e^{-(t-\tau)v\cdot \nabla _{x}}F_{\text{err}
}(\tau)\,d\tau}_{Z}\lesssim M^{-\delta},
\end{equation}
for some $\delta>0$.
\end{proposition}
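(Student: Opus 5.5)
I will estimate separately each of the eight pieces of $F_{\mathrm{err}}$ in \eqref{equ:Ferr,error term estimate}, namely $v\cdot\nabla_x f_{\mathrm{r}}$, $Q^{+}(f_{\mathrm{r}},f_{\mathrm{b}})$, $Q^{\pm}(f_{\mathrm{b}},f_{\mathrm{r}})$, $Q^{\pm}(f_{\mathrm{r}},f_{\mathrm{r}})$ and $Q^{\pm}(f_{\mathrm{b}},f_{\mathrm{b}})$. Each is estimated in the four sub-norms of \eqref{equ:Z-norm,error term estimate}. Since $e^{-(t-\tau)v\cdot\nabla_x}$ is an isometry on every $L^p_x$ and commutes with $\nabla_x$, and all pieces have velocity support in $B(0,2)$ by \eqref{equ:uniform compact,intro} (so $e^{\lra{v}^2}$ and $\lra{v}$ are harmless), the default route is the crude bound \eqref{equ:Ferr,intro}:
\[
\Big\| e^{\lra{v}^2}\int_{t_0}^t e^{-(t-\tau)v\cdot\nabla_x}F(\tau)\,d\tau\Big\|_Z \lesssim T_*\sup_{\tau\le T_*}\|F(\tau)\|_Z ,
\]
where $T_*=M^{s-1}\ln\ln M$. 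It is then enough to check that $\|F\|_Z\ll M^{1-s-\delta}$ for each piece except $Q^\pm(f_{\mathrm{b}},f_{\mathrm{b}})$.

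\textbf{Terms involving $f_{\mathrm{r}}$.} For $v\cdot\nabla_x f_{\mathrm{r}}$, the support condition $|v|\le N^{-1}=M^{-10}$ makes the transport term tiny. The bounds \eqref{equ:upper bound,fr,sobolev}--\eqref{equ:upper bound,fr,sobolev,L2}, together with their $\nabla_x^2$ analogue (obtained from \eqref{equ:upper bound,fr,k} with $k\le2$) and the crude $L^\infty_x$ bounds $\|f_{\mathrm{r}}\|_{L^1_vL^\infty_x}\lesssim M^{3/2-s_0}N^{-3/2}$, all carry huge negative powers of $N$. So $T_*\|v\cdot\nabla_xf_{\mathrm{r}}\|_Z\lesssim M^{-5}$.

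For the collision terms I use Proposition \ref{lemma:Z,norm,bilinear,estimate} in its time-independent form \eqref{equ:Z,norm,bilinear,estimate,time-independent}--\eqref{equ:Z,norm,bilinear,estimate,time-independent,gZ}, placing $f_{\mathrm{r}}$ in the $L^{1,1}_vL^\infty_x$ slot whenever possible. This slot is small: $\|f_{\mathrm{r}}\|_{L^{1}_vL^\infty_x}\lesssim M^{3/2-s_0}N^{-3/2}$, and $M^{-1}\|\nabla_x f_{\mathrm{r}}\|$ obeys the same bound. The other factor then has $Z$-norm at most $M^{1-s}$ for $f_{\mathrm{b}}$, by Lemma \ref{lemma:Hs,bounds on fb}. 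For $Q^-(f_{\mathrm{b}},f_{\mathrm{r}})$ the operator is $f_{\mathrm{b}}A[f_{\mathrm{r}}]$, and $A[f_{\mathrm{r}}]\lesssim M^{3/2-s_0}N^{-3/2}$ pointwise. The dangerous $Q^-(f_{\mathrm{r}},f_{\mathrm{b}})$ has been cancelled exactly by the construction \eqref{equ:fr,loss-term}. For $Q^+(f_{\mathrm{r}},f_{\mathrm{b}})$ I use \eqref{equ:Z,norm,bilinear,estimate,time-independent,gZ} with $f_{\mathrm{r}}$ in the $L^{1,1}L^\infty$ slot. All these terms are $O(M^{-C})$.

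\textbf{The main obstacle: $Q^\pm(f_{\mathrm{b}},f_{\mathrm{b}})$.} Here the crude approach gives $T_*M^{2-2s}\gg1$ in the $L^{1,1}_vL^\infty_x$ component, as the introduction notes. The $L^2_x$ components can still be handled crudely. Using \eqref{equ:sobolev norm,fb,L2L1} and collision estimates of the form $\|Q^\pm(f,g)\|_{L^2_xL^1_v}\lesssim\|f\|_{L^2_xL^1_v}\|g\|_{L^\infty_xL^1_v}$, I get
\[
M^{s_0}\cdot T_*\cdot M^{-1/2-s}M^{1-s}\lesssim M^{s_0-2s+1/2}\ln\ln M,
\]
which decays because $s>3/4$. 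For the $L^\infty_x$ parts I will instead exploit the time integration, working tube pair by tube pair in the expansion $D^\pm=M^{2-2s}\sum_{(i_1,j_1),(i_2,j_2)}\int_{t_0}^t\cdots$.

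\emph{Loss term, and diagonal pairs of the gain term.} For $Q^-$, the velocity $v$ lies in the support of $I_{i_1,j_1}$. Along the backward characteristic $x-(t-\tau)v$, the tube $K_{i_1,j_1}$ moves with velocity $v$, while the tubes $K_{i_2,j_2}$ with $(i_2,j_2)\ne(i_1,j_1)$ have relative velocity of size at least $|e_{i_1,j_1}-e_{i_2,j_2}|$, by \eqref{equ:lower bound,v,u,eij}. A tube of width $M^{-1}$ is therefore crossed in time $\lesssim (M|e_{i_1,j_1}-e_{i_2,j_2}|)^{-1}$. Summing over pairs with the lattice bound
\[
\sum_{(i_1,j_1)\ne(i_2,j_2)}\frac{1}{|e_{i_1,j_1}-e_{i_2,j_2}|}\lesssim M^4\ln M,
\]
together with $\|I\|_{L^1}\sim M^{-2}$, should give a gain of roughly $M^{-1}\ln M$ in place of $T_*$. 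The diagonal pairs contribute only $M^2$ terms and are controlled by $T_*$ directly.

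\emph{Off-diagonal pairs of the gain term.} For $Q^+$ I first decompose $\omega$ into angular sectors. The post-collisional velocities then have relative velocity $P_\omega(u-v)$ with respect to the tubes, and the same crossing-time argument applies, with a measure estimate for the set of $\omega$ on which the projection is small.

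Collecting everything, each of the 32 bounds should come out as $M^{-\delta}$ with, for instance, $\delta=\min\{2s-s_0-\tfrac12,\ s-s_0,\ 1-s\}/2>0$. The $Q^+(f_{\mathrm{b}},f_{\mathrm{b}})$ $L^\infty_x$ estimate, with its angular decomposition, is the step I expect to be hardest.
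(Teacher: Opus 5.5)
Your plan follows the paper's proof closely. You use the crude bound $T_*\sup_\tau\|e^{\langle v\rangle^2}F(\tau)\|_Z$ for $v\cdot\nabla_x f_{\mathrm r}$ and for every collision term containing $f_{\mathrm r}$, with the time-independent estimates of Proposition \ref{lemma:Z,norm,bilinear,estimate} and $f_{\mathrm r}$ in the $L^{1,1}_vL^\infty_x$ slot. You use the same crude bound for the $L^2_x$ components of $Q^\pm(f_{\mathrm b},f_{\mathrm b})$. For their $L^\infty_x$ components you use a time-integration gain, split into diagonal and off-diagonal pairs, with $|v-u|\ge\frac14|e_{i_1,j_1}-e_{i_2,j_2}|$, the lattice sum $\lesssim M^4\ln M$, and an angular decomposition for $Q^+$.

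The one genuine difference is the loss term. The paper does not go pair by pair. It uses that all tubes pass through the origin, so $A[f_{\mathrm b}](\tau,y,v)\lesssim M^{1-s}(M|y|+1)^{-2}$, and a characteristic with $|v|\sim 1$ spends time $\lesssim M^{-1}$ in that region. This gives $M^{1-2s}$ with no lattice counting. Your pairwise route also works, but the time spent in $K_{i_2,j_2}$ is governed by the transverse component $|P^{\perp}_{e_{i_2,j_2}}(v-u)|$, not by $|v-u|$. That component is comparable to $\sin\angle(e_{i_1,j_1},e_{i_2,j_2})$. This is $\gtrsim|e_{i_1,j_1}-e_{i_2,j_2}|$ except for nearly antipodal pairs near the equator, where $v-u$ runs along the tube. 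There are few enough such pairs that counting them separately repairs this.

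Two further corrections.

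First, in the $L^2_x$ component the $Z$-norm requires $L^2_v$, and an $L^2_xL^1_v$ bound does not control it. Use the $r=2$ collision estimate with $f_{\mathrm b}$ in $L^\infty_xL^{2,1}_v$ and in $L^2_xL^{1,1}_v$. With Lemma \ref{lemma:Hs,bounds on fb}, and keeping the factor $M^{s-1}$ in $T_*$, this gives $M^{s_0-s-\frac12}\ln\ln M$. So $s>\frac34$ plays no role there.

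Second, $s>\frac34$ enters exactly in the $Q^+$ step you left unquantified. There the crossing time of $K_{i_1,j_1}$ is $\lesssim\big(M\sin\phi_{i_1,j_1}(\omega)\,|\cos\theta(v,u,\omega)|\,|v-u|\big)^{-1}$. The paper bounds the kernel by $|u-v|$, removes $\{\sin\phi_{i_1,j_1}\le M^{-1}\}\cup\{|\cos\theta|\le M^{-1/2}\}$, pays $M^{1/2}$ for $\int|\cos\theta|^{-2}\,d\omega$, and ends with $M^{\frac32-2s}\ln M$. Your proposed $\delta$ ignores this exponent, so it is not backed by anything you computed.

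There is a cheaper way to finish your sketch. After the change of variables $(u^*,v^*)\mapsto(u,v)$ the kernel is $|(u-v)\cdot\omega|=|v-u|\,|\cos\theta(v,u,\omega)|$. This cancels that factor in the crossing-time bound and leaves $\int_{\mathbb{S}^2}d\omega/\sin\phi_{i_1,j_1}(\omega)<\infty$. Every pair, diagonal or not, then contributes $\lesssim M^{-1}\|I_{i_1,j_1}\|_{L^1}\|I_{i_2,j_2}\|_{L^1}$. Since $\sum_{i,j}\|I_{i,j}\|_{L^1}\lesssim 1$, you get $M^{1-2s}$ for $D^+$ without any angular decomposition or lattice sum.
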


Given that the
 $Z$-norm \eqref{equ:Z-norm,error term estimate} comprises four sub-norms and there are eight error terms in \eqref{equ:Ferr,error term estimate}, a total of 32 error estimates must be established. For the majority of the terms,
our primary strategy is to move the time integration outside the norm, leading to an estimate of the form
\begin{equation*}
	\bbn{e^{\lra{v}^{2}} \int_{t_{0}}^{t}e^{-(t-\tau)v\cdot \nabla _{x}}F_{\text{err}%
	}(\tau)\,d\tau}_{Z}\lesssim T_{*}\n{e^{\lra{v}^{2}}F_{\text{err}}}_{L_{t}^{\infty }Z}.
\end{equation*}
However, a notable exception occurs in the treatment of the  $L_{v}^{1,1}L_{x}^{\infty
}$ norm of $Q^{\pm}(f_{\mathrm{b}},f_{\mathrm{b}})$, where extracting the time integral yields a rapid growth in $M$.
To circumvent this, we employ time-integration techniques from dispersive equations. This delicate analysis requires a variety of analytical techniques, including localization via time integration, projection measures, angular decomposition, lattice counting, and geometric estimates.

The overall proof strategy relies on establishing suitable
 $Z$-norm bounds for the approximate solutions as a prerequisite. Accordingly, in Section \ref{sec:Z-norm Bounds on the Approximation Profiles}, we first establish such $Z$-norm bounds for $f_{\mathrm{b}}$, $f_{\mathrm{r}}$, and $f_{\mathrm{a}}$. Building on these bounds, we analyze each term appearing in $F_{\mathrm{err}}$ separately.
Specifically, in Section \ref{section:Analysis of term1} we deal with the estimate of the transport term $v \cdot \nabla_{x}f_{\mathrm{r}}$, while in Section \ref{section:Analysis of term2} we address the bilinear terms involving $f_{\mathrm{r}}$. Finally, the most delicate terms $Q^{\pm}(f_{\mathrm{b}},f_{\mathrm{b}})$ are handled in Sections \ref{section:Analysis of Q-} and \ref{section:Analysis of Q+}.

\subsection{$Z$-norm Estimates for the Approximation Solutions}\label{sec:Z-norm Bounds on the Approximation Profiles}
In the section, we focus on establishing the $Z$-norm estimates for $f_{\mathrm{b}}$ and $f_{\mathrm{r}}$ and $f_{\mathrm{a}}$.
\begin{lemma}[$Z$-norm estimates for $f_{\mathrm{b}}$, $f_{\mathrm{r}}$, $f_{\mathrm{a}}$]\label{lemma:z-norm bounds on,fb,fr,fa}
For the $Z$-norm, we have
\begin{align}
\n{e^{\lra{v}^{2}}f_{\mathrm{b}}(t)}_{L^{\infty}(0,T_{*};Z)}\lesssim&  M^{1-s},\label{equ:z-norm estimate for fb}\\
\n{e^{\lra{v}^{2}}f_{\mathrm{r}}(t)}_{L_{t}^{\infty}(0,T_{*};Z)}\lesssim& 1,\label{equ:z-norm bounds for fr}\\
\n{e^{\lra{v}^{2}}f_{\mathrm{a}}(t)}_{L_{t}^{\infty}(0,T_{*};Z)}\lesssim& M^{1-s}.\label{equ:z-norm estimate for fa}
\end{align}
\end{lemma}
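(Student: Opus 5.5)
The plan is to estimate the four sub-norms of the $Z$-norm separately for $f_{\mathrm{b}}$ and $f_{\mathrm{r}}$, and then add the two bounds to get \eqref{equ:z-norm estimate for fa}. Since $T_{*}\le \frac14$ for $M$ large, the estimates of Lemma \ref{lemma:Hs,bounds on fb} and Lemma \ref{lemma:bounds on fr} apply on $[0,T_{*}]$. Both $f_{\mathrm{b}}$ and $f_{\mathrm{r}}$ are supported in $\{|v|\le 2\}$, so the weights $\lra{v}$ and $e^{\lra{v}^{2}}$ are bounded by constants and can be dropped. Hence it suffices to bound the unweighted norms $L_{v}^{2}L_{x}^{2}$ and $L_{v}^{1}L_{x}^{\infty}$ of $f$ and $\nabla_{x}f$.

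\textbf{Bounds for $f_{\mathrm{b}}$.} The $L_{v}^{1,1}L_{x}^{\infty}$ part and the $M^{-1}\nabla_{x}$ part in $L_{v}^{1,1}L_{x}^{\infty}$ follow at once from \eqref{equ:sobolev norm,fb,LvpLxinf} with $p=1$, giving $M^{1-s}$. For the $L^{2}$-type parts I repeat the Minkowski argument from the proof of \eqref{equ:critical norm estimate for fb} with $s_{0}$ replaced by $0$ and by $1$. Using $\n{K_{i,j}}_{L_{x}^{2}}\lesssim M^{-1}$, $\n{\nabla_{x}K_{i,j}}_{L_{x}^{2}}\lesssim 1$ and \eqref{equ:upper bound,Ij,sum}, this gives
\begin{align*}
M^{s_{0}}\n{f_{\mathrm{b}}}_{L_{v}^{2}L_{x}^{2}}\lesssim M^{s_{0}-s},\qquad M^{s_{0}-1}\n{\nabla_{x}f_{\mathrm{b}}}_{L_{v}^{2}L_{x}^{2}}\lesssim M^{s_{0}-s}.
\end{align*}
Both are dominated by $M^{1-s}$, which yields \eqref{equ:z-norm estimate for fb}.

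\textbf{Bounds for $f_{\mathrm{r}}$.} The $L^{2}$ parts are already in \eqref{equ:upper bound,fr,sobolev} and \eqref{equ:upper bound,fr,sobolev,L2}. Multiplying by $M^{s_{0}-1}$ and $M^{s_{0}}$ respectively gives bounds of order $\exp[-tM^{1-s}]\le 1$. For the $L_{v}^{1}L_{x}^{\infty}$ part I use the explicit formula \eqref{equ:fr} together with $A[f_{\mathrm{b}}]\ge 0$:
\begin{align*}
\n{f_{\mathrm{r}}}_{L_{v}^{1}L_{x}^{\infty}}\le M^{\frac32-s_{0}}N^{\frac32}\n{\chi(Nv)}_{L_{v}^{1}}\lesssim M^{\frac32-s_{0}}N^{-\frac32}.
\end{align*}
With $N=M^{10}$ this is tiny. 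For the gradient, Lemma \ref{lemma:pointwise estimate,fb} with $k=1$ gives $\big|\nabla_{x}\int_{0}^{t}A[f_{\mathrm{b}}]\,d\tau\big|\lesssim tM^{2-s}\lesssim M$ on the support of $\chi(Nv)$. The derivative falling on $\chi(100Mx)$ contributes another factor $M$. Hence $M^{-1}\n{\nabla_{x}f_{\mathrm{r}}}_{L_{v}^{1}L_{x}^{\infty}}\lesssim M^{\frac32-s_{0}}N^{-\frac32}$ as well. This gives \eqref{equ:z-norm bounds for fr}.

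Finally, \eqref{equ:z-norm estimate for fa} follows from the triangle inequality and $1\lesssim M^{1-s}$. The only place that needs care is the $L_{x}^{\infty}$ control of $f_{\mathrm{r}}$. The large prefactor $M^{\frac32-s_{0}}$ has to be absorbed by the velocity concentration $N^{-\frac32}$, which is why $N=M^{10}$ is essential. The gradient of the exponential factor must also stay within $O(M)$, and this uses $T_{*}M^{2-s}\lesssim M\ln\ln M$. I expect no genuine obstacle here beyond this bookkeeping: the $\ln\ln M$ factor is harmless against the surplus $N^{-3/2}$.
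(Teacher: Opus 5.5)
Your proposal is correct and follows the paper's route: drop the weights using compact velocity support, take the $L_v^2L_x^2$-type parts of $f_{\mathrm{b}}$ from the Minkowski computation and those of $f_{\mathrm{r}}$ from \eqref{equ:upper bound,fr,sobolev}--\eqref{equ:upper bound,fr,sobolev,L2}, take the $L_v^{1}L_x^\infty$-type parts from \eqref{equ:sobolev norm,fb,LvpLxinf} and from the explicit formula for $f_{\mathrm{r}}$ (where $N^{-3/2}$ absorbs $M^{3/2-s_0}$), and finish with the triangle inequality. The differences are minor: rerunning Minkowski with $\|K_{i,j}\|_{L_x^2}\lesssim M^{-1}$ justifies $M^{s_0}\|f_{\mathrm{b}}\|_{L_v^2L_x^2}\lesssim M^{s_0-s}$ more explicitly than the paper's appeal to the $H_x^{s_0}$ and $L_x^\infty$ bounds plus compact support, and for $\nabla_x f_{\mathrm{r}}$ you use only $A[f_{\mathrm{b}}]\ge 0$ instead of the lower bound \eqref{equ:upper lower bound,fr}, so your intermediate claim $tM^{2-s}\lesssim M$ should read $M\ln\ln M$---a loss that, as you note, $N^{-3/2}$ absorbs.
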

\begin{proof}
We begin by establishing the $Z$-norm estimate for $f_{\mathrm{b}}$.
Recalling the Sobolev norm estimates in Lemma \ref{lemma:Hs,bounds on fb}, we have
\begin{align}
\n{e^{\lra{v}^{2}}f_{\mathrm{b}}(t)}_{L_{v}^{2}H_{x}^{s_{0}}}
\lesssim & M^{s_{0}-s},\label{equ:critical norm estimate for fb,used}\\
\n{e^{\lra{v}^{2}}f_{\mathrm{b}}(t)}_{L_{v}^{p}L_{x}^{\infty}}\lesssim& M^{1-s},\quad p\in[1,\infty].
\label{equ:sobolev norm,fb,LvpLxinf,used}
\end{align}
Using the uniform compact support of $f_{\mathrm{b}}$ together with the above estimates, we obtain
\begin{align}\label{equ:fb,Z-norm,Lv2Lx2}
M^{s_{0}} \n{e^{\lra{v}^{2}}f_{\mathrm{b}}}_{L_{v}^{2,1} L_{x}^{2}} + \n{e^{\lra{v}^{2}}f_{\mathrm{b}}}_{L_{v}^{1,1} L_{x}^{\infty}} \lesssim M^{1-s}.
\end{align}
Furthermore, we address the estimates involving the spatial derivative.
Noting that the $x$-derivative $\nabla_{x}$ acting on $f_{\mathrm{b}}$ yields an additional factor $M$, we apply the same argument used to derive \eqref{equ:fb,Z-norm,Lv2Lx2} to yield
\begin{align}\label{equ:fb,Z-norm,H1,Lv2Lx2}
M^{s_{0}-1} \n{e^{\lra{v}^{2}}\nabla_{x} f_{\mathrm{b}}}_{L_{v}^{2,1} L_{x}^{2}} + M^{-1} \n{e^{\lra{v}^{2}}\nabla_{x} f_{\mathrm{b}}}_{L_{v}^{1,1} L_{x}^{\infty}} \lesssim M^{1-s}.
\end{align}
Combining \eqref{equ:fb,Z-norm,Lv2Lx2} and \eqref{equ:fb,Z-norm,H1,Lv2Lx2} gives the desired $Z$-norm estimate \eqref{equ:z-norm estimate for fb} for $f_{\mathrm{b}}$.

Next, we turn to the analysis of the $Z$-norm estimate for $f_{\mathrm{r}}$.
Let us recall that
\begin{equation*}
f_{\mathrm{r}}(t,x,v)=M^{\frac{3}{2}-s_{0}}N^{\frac{3}{2}}\exp \left[
-\int_{0}^{t}A[f_{\mathrm{b}}](\tau,x,v)d\tau\right] \chi (100Mx)\chi (Nv).
\end{equation*}
We proceed to carry out the
$Z$-norm estimates.

\textbf{The $M^{s_{0}}\Vert e^{\lra{v}^{2}} \cdot \Vert_{L_{v}^{2,1}L_{x}^{2}}$ and $M^{s_{0}-1}\Vert e^{\lra{v}^{2}}\nabla_{x}\cdot\Vert_{L_{v}^{2,1}L_{x}^{2}}$
 estimates.}

Using the uniform compact support of $f_{\mathrm{r}}$ together with the Sobolev norm estimates \eqref{equ:upper bound,fr,sobolev}--\eqref{equ:upper bound,fr,sobolev,L2} established for $f_{\mathrm{r}}$, we immediately get
 \begin{align*}
 M^{s_{0}} \n{e^{\lra{v}^{2}}f_{\rm{r}}(t)}_{L_{v}^{2,1}L_{x}^{2}}\lesssim&
M^{s_{0}} \n{e^{\lra{v}^{2}}f_{\rm{r}}(t)}_{L_{v}^{2}L_{x}^{2}}\lesssim \exp[-t M^{1-s}],\\
M^{s_{0}-1} \n{e^{\lra{v}^{2}}\nabla_{x}f_{\rm{r}}(t)}_{L_{v}^{2,1}L_{x}^{2}}\lesssim&
M^{s_{0}-1} \n{e^{\lra{v}^{2}}\nabla_{x}f_{\rm{r}}(t)}_{L_{v}^{2}L_{x}^{2}}\lesssim\exp[-t M^{1-s}].
 \end{align*}

\textbf{The $\Vert e^{\lra{v}^{2}} \cdot \Vert _{L_{v}^{1,1}L_{x}^{\infty}}$ and
$M^{-1}\Vert e^{\lra{v}^{2}}\nabla_{x} \cdot \Vert_{L_{v}^{1,1}L_{x}^{\infty}}$ estimates.}

It suffices to consider the more delicate $M^{-1}\Vert e^{\lra{v}^{2}}\nabla_{x}\cdot\Vert_{L_{v}^{1,1}L_{x}^{\infty}}$ norm, as the $\Vert e^{\lra{v}^{2}} \cdot \Vert _{L_{v}^{1,1}L_{x}^{\infty}}$ norm can be dealt with in a similar way.
By the Leibniz rule, we have
\begin{align*}
&\n{e^{\lra{v}^{2}}\nabla_{x}f_{\rm{r}}(t)}_{L_{v}^{1,1}L_{x}^{\infty}}\lesssim \mathrm{I}_{1}+\mathrm{I}_{2},
\end{align*}
where
\begin{align*}
\mathrm{I}_{1}=& M^{1+\frac{3}{2}-s_{0}}N^{\frac{3}{2}}\bbn{\exp\lrc{-\int_{0}^{t}A[f_{\mathrm{b}}](\tau,x,v)d\tau}(\nabla\chi)(100Mx)\chi(Nv)}_{L_{v}^{1,1}L_{x}^{\infty}},\\
\mathrm{I}_{2}=&M^{\frac{3}{2}-s_{0}}N^{\frac{3}{2}}\bbn{\lrc{\nabla_{x}\int_{0}^{t}
A[f_{\mathrm{b}}](\tau)d\tau}\exp\lrc{-\int_{0}^{t}A[f_{\mathrm{b}}](\tau)d\tau}
\chi(100Mx)\chi(Nv)}_{L_{v}^{1,1}L_{x}^{\infty}}.
\end{align*}
Combining the pointwise upper bound \eqref{equ:upper bound,fr,k} and the lower bound \eqref{equ:upper lower bound,fr} for $A[f_{\mathrm{b}}]$ with the elementary estimates
$\n{(\nabla^{k}\chi)(Mx)}_{L_{x}^{\infty}}\lesssim 1$
and $\n{\chi(Nv)}_{L_{v}^{1,1}}\lesssim N^{-3}$, we obtain
\begin{align}\label{equ:fr,Z-norm,L1}
&M^{-1}\n{e^{\lra{v}^{2}}\nabla_{x}f_{\rm{r}}(t)}_{L_{v}^{1,1}L_{x}^{\infty}}\\
\lesssim& M^{-1}(\mathrm{I}_{1}+\mathrm{I}_{2})\notag\\
\lesssim& M^{-1}M^{1+\frac{3}{2}-s_{0}}N^{\frac{3}{2}}\exp[-2tM^{1-s}]\n{(\nabla\chi)(100Mx)}_{L_{x}^{\infty}}\n{\chi(Nv)}_{L_{v}^{1,1}}\notag\\
&+M^{-1}M^{\frac{3}{2}-s_{0}}N^{\frac{3}{2}}\lra{tM^{1+1-s}}\exp[-2tM^{1-s}]\n{\chi(100Mx)}_{L_{x}^{\infty}}\n{\chi(Nv)}_{L_{v}^{1,1}}\notag\\
	\lesssim&  M^{\frac{3}{2}-s_{0}}N^{-\frac{3}{2}}\exp[-2t M^{1-s}]\lra{tM^{1-s}}\lesssim N^{-1},\notag
\end{align}
where in the last inequality we used
the fact that $N=M^{10}$ to absorb the remaining powers of $M$.
Therefore, we have completed the proof of the $Z$-norm estimate \eqref{equ:z-norm bounds for fr} for $f_{\mathrm{r}}$.

Finally, the $Z$-norm estimate for $f_{\mathrm{a}}$ follows directly from the triangle inequality
\begin{align*}
	\n{e^{\lra{v}^{2}}f_{\mathrm{a}}(t)}_{Z}\lesssim \n{e^{\lra{v}^{2}}f_{\mathrm{r}}(t)}_{Z}+\n{e^{\lra{v}^{2}}f_{\mathrm{b}}(t)}_{Z}.
\end{align*}
Then combining the $Z$-norm estimate \eqref{equ:z-norm estimate for fb} for $f_{\mathrm{b}}$
and $Z$-norm estimate \eqref{equ:z-norm bounds for fr} for $f_{\mathrm{r}}$, we complete the proof of the $Z$-norm estimate \eqref{equ:z-norm estimate for fa} for $f_{\mathrm{a}}$.

\end{proof}

\subsection{Analysis of $v\cdot \nabla_{x}f_{\mathrm{r}}$} \label{section:Analysis of term1}
We now address the $Z$-norm estimate for the transport term $v \cdot \nabla_{x}f_{\mathrm{r}}$ in the following lemma.
\begin{lemma}
For $0\leq  t_{0}\leq t \leq T_{*}$, we have
\begin{align*}
\bbn{ e^{\lra{v}^{2}}\int_{t_{0}}^{t}e^{-(t-\tau)v\cdot \nabla_{x}}(v\cdot \nabla_{x}f_{\mathrm{r}})(\tau)\,d\tau} _{Z}\lesssim M^{-\delta}.
\end{align*}
\end{lemma}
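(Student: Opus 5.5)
Because $f_{\mathrm{r}}$ is localized to $|v|\le N^{-1}$, the factor $v$ gives a gain of $N^{-1}=M^{-10}$. This swamps every power of $M$ the $Z$-norm can produce, so I will not need any dispersive time integration. The plan is to bring the time integral outside the norm, which gives the bound $T_*\,\|e^{\langle v\rangle^{2}}v\cdot\nabla_x f_{\mathrm{r}}\|_{L^\infty_t(0,T_*;Z)}$.

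This step is legitimate because $e^{-(t-\tau)v\cdot\nabla_x}$ is a translation in $x$ for fixed $v$. It therefore preserves $L^2_x$ and $L^\infty_x$ and commutes with $\nabla_x$. The velocity weights $\langle v\rangle$ and $e^{\langle v\rangle^2}$ depend only on $v$, so they pass through the translation, and Minkowski's inequality in $\tau$ applies.

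Next I bound each of the four pieces of the $Z$-norm of $v\cdot\nabla_x f_{\mathrm{r}}(\tau)$. On the support of $\chi(Nv)$ we have $|v|\le N^{-1}$ and $\langle v\rangle e^{\langle v\rangle^2}\lesssim 1$. Hence, pointwise,
\[
|e^{\langle v\rangle^2}\langle v\rangle\, v\cdot\nabla_x^{1+k} f_{\mathrm{r}}|\lesssim N^{-1}|\nabla_x^{1+k} f_{\mathrm{r}}|, \qquad k=0,1.
\]
For $k=0$ I use the estimate \eqref{equ:upper bound,fr,sobolev}, $\|\nabla_x f_{\mathrm{r}}\|_{L^2_vL^2_x}\lesssim M^{1-s_0}$. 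This gives
\[
M^{s_0}\|v\cdot\nabla_x f_{\mathrm{r}}\|_{L^{2,1}_vL^2_x}\lesssim N^{-1}M.
\]
For the $L^\infty_x$ pieces I use the computation leading to \eqref{equ:fr,Z-norm,L1}, which shows $\|\nabla_x f_{\mathrm{r}}\|_{L^{1,1}_vL^\infty_x}\lesssim M^{\frac32-s_0}N^{-\frac32}\langle tM^{1-s}\rangle\lesssim M^{2}N^{-3/2}$.

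For the two second-derivative pieces I differentiate the formula \eqref{equ:fr} twice with the Leibniz rule. Each $x$-derivative lands on $\chi(100Mx)$, on $\int_0^t A[f_{\mathrm{b}}]$, or on a previously produced factor of $\nabla_x\int_0^t A[f_{\mathrm{b}}]$. Lemma \ref{lemma:pointwise estimate,fb} with $k\le 2$ controls $\nabla_x^k A[f_{\mathrm{b}}]$ by $M^{k+1-s}$ on the support of $\chi(Nv)$. With $t\le T_*\le 1$, every such term costs at most $M^{C}$ for an absolute constant $C$ (e.g. $C=4$). The exponential factor is bounded by $1$, since $A[f_{\mathrm{b}}]\ge0$. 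I therefore obtain
\[
M^{s_0-1}\|\nabla_x^2 f_{\mathrm{r}}\|_{L^{2}_vL^2_x}+M^{-1}\|\nabla_x^2 f_{\mathrm{r}}\|_{L^{1}_vL^\infty_x}\lesssim M^{C}.
\]
Here I use $\|\chi(Nv)\|_{L^2_v}\sim N^{-3/2}$ and $\|\chi(Nv)\|_{L^1_v}\sim N^{-3}$, which cancel the prefactor $N^{3/2}$ in the $L^2_v$ case and over-cancel it in the $L^1_v$ case.

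Collecting the four pieces gives
\[
\|e^{\langle v\rangle^2}v\cdot\nabla_xf_{\mathrm{r}}\|_{L^\infty_tZ}\lesssim N^{-1}M^{C}=M^{C-10}.
\]
Multiplying by $T_*\le1$, the $\tau$-integral is bounded by $M^{C-10}\lesssim M^{-\delta}$, for instance with $\delta=1$. The only real work is the bookkeeping of the second $x$-derivative of $\exp[-\int_0^tA[f_{\mathrm{b}}]]$, and this is exactly why Lemma \ref{lemma:pointwise estimate,fb} was proved for $k=2$. I do not expect any genuine obstacle, since the choice $N=M^{10}$ leaves ample room.
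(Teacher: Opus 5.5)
Your proposal is correct and follows essentially the same route as the paper: you pull the $\tau$-integral outside the norm (the free transport is an $x$-translation for fixed $v$), gain $N^{-1}$ from $|v|\lesssim N^{-1}$ on the support of $f_{\mathrm{r}}$, handle the second $x$-derivatives of $f_{\mathrm{r}}$ by the Leibniz rule with the $k\le 2$ pointwise bounds on $A[f_{\mathrm{b}}]$, and let $N=M^{10}$ absorb all polynomial losses in $M$. One harmless slip is that \eqref{equ:fr,Z-norm,L1} controls $M^{-1}\|\nabla_x f_{\mathrm{r}}\|_{L^{1,1}_vL^\infty_x}$, so the correct bound is $\|\nabla_x f_{\mathrm{r}}\|_{L^{1,1}_vL^\infty_x}\lesssim M^{\frac52-s_0}N^{-\frac32}$ rather than $M^{2}N^{-\frac32}$; this changes nothing in your conclusion.
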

\begin{proof}
Under the choice $N=M^{10}$ stated in \eqref{equ:condition,parameters}, it is sufficient to establish a decay estimate that carries a smallness factor of the form $N^{-\delta}$, which immediately implies the desired decay $M^{-\delta}$. We verify this by estimating the components of the $Z$-norm separately.
	
\textbf{The $M^{s_{0}}\Vert e^{\lra{v}^{2}} \cdot \Vert_{L_{v}^{2,1}L_{x}^{2}}$ and $M^{s_{0}-1}\Vert e^{\lra{v}^{2}}\nabla_{x}\cdot\Vert_{L_{v}^{2,1}L_{x}^{2}}$
 estimates.}

It suffices to treat the $M^{s_{0}-1}\Vert e^{\lra{v}^{2}}\nabla_{x}\cdot\Vert_{L_{v}^{2,1}L_{x}^{2}}$ norm, as the estimate for the $M^{s_{0}}\Vert e^{\lra{v}^{2}} \cdot \Vert_{L_{v}^{2,1}L_{x}^{2}}$ norm follows analogously.
Since $f_{\mathrm{r}}$ is supported where $\lr{|v|\lesssim N^{-1}}$, we have
\begin{align}\label{equ:v term,H1}
\n{e^{\lra{v}^{2}}\nabla_{x} (v\cdot \nabla_{x} f_{\mathrm{r}})}_{L_{v}^{2,1}L_{x}^{2}}\lesssim&
N^{-1}\n{\Delta_{x}f_{\mathrm{r}}}_{L_{v}^{2}L_{x}^{2}}
\lesssim N^{-1}M^{2-s_{0}},
\end{align}
where the last inequality utilizes the bounds from the proof of \eqref{equ:upper bound,fr,sobolev} with one $x$-derivative producing a factor of $M$.
We then insert in $T_{*}=M^{s-1}(\ln\ln M)$ to get
\begin{align*}
	&\bbn{e^{\lra{v}^{2}}\nabla_{x} \int_{t_{0}}^{t}e^{-(t-\tau)v\cdot \nabla _{x}}(v\cdot \nabla_{x}f_{\mathrm{r}})(\tau)\,d\tau}_{L_{v}^{2,1}L_{x}^{2}}\\
	\lesssim& T_{*} \sup_{t\in[0,T_{*}]}\n{e^{\lra{v}^{2}}\nabla_{x}(v\cdot \nabla_{x} f_{\mathrm{r}})}_{L_{v}^{2,1}L_{x}^{2}}\\
	\lesssim&M^{s-1}(\ln\ln M) N^{-1}M^{2-s_{0}}\\
\lesssim&  N^{-\frac{1}{2}},
\end{align*}
which suffices for our goal.

\textbf{The $\Vert e^{\lra{v}^{2}} \cdot \Vert _{L_{v}^{1,1}L_{x}^{\infty}}$ and $M^{-1}\Vert e^{\lra{v}^{2}}\nabla_{x}\cdot\Vert _{L_{v}^{1,1}L_{x}^{\infty}}$ estimates.}

Again, it suffices to treat the $M^{-1}\Vert e^{\lra{v}^{2}}\nabla_{x}\cdot\Vert _{L_{v}^{1,1}L_{x}^{\infty}}$ norm, as the $\Vert e^{\lra{v}^{2}} \cdot \Vert _{L_{v}^{1,1}L_{x}^{\infty}}$ norm can be dealt with in a similar way.
We recall that
\begin{equation*}
f_{\mathrm{r}}(t,x,v)=M^{\frac{3}{2}-s_{0}}N^{\frac{3}{2}}\exp \left[
-\int_{0}^{t}A[f_{\mathrm{b}}](\tau,x,v)d\tau\right] \chi (100Mx)\chi (Nv).
\end{equation*}
Using the support condition $|v|\lesssim N^{-1}$, and Leibniz rule, we have
\begin{align*}
&\n{e^{\lra{v}^{2}}\nabla_{x}(v\cdot \nabla_{x}f_{\mathrm{r}})}_{L_{v}^{1,1}L_{x}^{\infty}}
\lesssim N^{-1}M^{\frac{3}{2}-s_{0}}N^{\frac{3}{2}}\lrs{\mathrm{I}_{1}+\mathrm{I}_{2}+\mathrm{I}_{3}+\mathrm{I}_{4}},
\end{align*}
where
\begin{align*}
\mathrm{I}_{1}=&M^{2}\bbn{\exp\lrc{-\int_{0}^{t}A[f_{\mathrm{b}}](\tau)d\tau}(\nabla^{2} \chi)(100Mx)\chi(Nv)}_{L_{v}^{1,1}L_{x}^{\infty}},\\ \mathrm{I}_{2}=&M\bbn{\lrc{\nabla_{x}\int_{0}^{t}A[f_{\mathrm{b}}](\tau)d\tau}
\exp\lrc{-\int_{0}^{t}A[f_{\mathrm{b}}](\tau)d\tau}(\nabla \chi)(100Mx)\chi(Nv)}_{L_{v}^{1,1}L_{x}^{\infty}},\\
\mathrm{I}_{3}=&\bbn{
\lrc{\nabla_{x}^{2}\int_{0}^{t}A[f_{\mathrm{b}}](\tau)d\tau}
\exp\lrc{-\int_{0}^{t}A[f_{\mathrm{b}}](\tau)d\tau} \chi(100Mx)\chi(Nv)}_{L_{v}^{1,1}L_{x}^{\infty}},\\
\mathrm{I}_{4}=&\bbn{\lrc{\nabla_{x}\int_{0}^{t}A[f_{\mathrm{b}}](\tau)d\tau}^{2}
\exp\lrc{-\int_{0}^{t}A[f_{\mathrm{b}}](\tau)d\tau} \chi (100Mx)\chi(Nv)}_{L_{v}^{1,1}L_{x}^{\infty}}.
\end{align*}
We note that the pointwise upper bound \eqref{equ:upper bound,fr,k}
\begin{align}\label{equ:upper bound,fr,k,used}
\big|\chi(Nv) \nabla_{x}^{k}A[f_{\mathrm{b}}](t,x,v)\big |\lesssim M^{k+1-s}
\end{align}
and the pointwise lower bound \eqref{equ:upper lower bound,fr}
\begin{equation}\label{equ:upper lower bound,fr,used}
\chi(Nv)A[f_{\mathrm{b}}] \gtrsim  M^{1-s} \chi(Nv),\quad \text{for $|x|\leq \frac{1}{100M}$}.
\end{equation}
Together with the basic estimates
$\n{(\nabla^{k}\chi)(Mx)}_{L_{x}^{\infty}}\lesssim 1$
and $\n{\chi(Nv)}_{L_{v}^{1,1}}\lesssim N^{-3}$, we arrive at
\begin{align*}
&N^{-1}M^{\frac{3}{2}-s_{0}}N^{\frac{3}{2}}\lrs{\mathrm{I}_{1}+\mathrm{I}_{2}+\mathrm{I}_{3}
+\mathrm{I}_{4}}\\
\lesssim& N^{-1}M^{\frac{3}{2}-s_{0}}N^{\frac{3}{2}}M^{2}N^{-3}\exp[-2tM^{1-s}]\lra{tM^{1-s}}^{2}\\
\lesssim& N^{-\frac{5}{2}}M^{\frac{7}{2}-s_{0}}.
\end{align*}
Multiplying by $T_{*}=M^{s-1}(\ln\ln M)$ and using $N=M^{10}$ gives
\begin{align}\label{equ:vfr,L1}
&M^{-1} \bbn{e^{\lra{v}^{2}}\nabla_{x} \int_{t_{0}}^{t}e^{-(t-\tau)v\cdot \nabla _{x}}(v\cdot \nabla_{x}f_{\mathrm{r}})(\tau)\,d\tau}_{L_{v}^{1,1}L_{x}^{\infty}}\\
\lesssim& T_{*}N^{-\frac{5}{2}}M^{\frac{7}{2}-s_{0}}
\lesssim N^{-1}, \notag
\end{align}
which is sufficient for our goal.

\end{proof}

\subsection{Analysis of $Q^{+}(f_{\mathrm{r}},f_{\mathrm{b}})$, $Q^{\pm}(f_{\mathrm{b}},f_{\mathrm{r}})$, and
$Q^{\pm}(f_{\mathrm{r}},f_{\mathrm{r}})$}\label{section:Analysis of term2}
We begin by recalling the key bounds for $f_{\mathrm{b}}$ and $f_{\mathrm{r}}$ established in Lemma \ref{lemma:z-norm bounds on,fb,fr,fa}. For the time interval $[0, T_{*}]$, we have
\begin{align}
\n{e^{\lra{v}^{2}}f_{\mathrm{b}}}_{L_{t}^{\infty}(0,T_{*};Z)}\lesssim& M^{1-s}, \label{equ:z-norm,fb,used}\\
\n{e^{\lra{v}^{2}}f_{\mathrm{r}}}_{L_{t}^{\infty}(0,T_{*};Z)}\lesssim& 1,\label{equ:z-norm,fr,used}\\
\n{e^{\lra{v}^{2}}f_{\mathrm{r}}}_{L_{t}^{\infty}(0,T_{*};L_{v}^{1,1}L_{x}^{\infty})}\lesssim & N^{-1},\label{equ:z-norm,lv1,used}\\
M^{-1}\n{e^{\lra{v}^{2}}\nabla_{x}f_{\mathrm{r}}}_{L_{t}^{\infty}(0,T_{*};L_{v}^{1,1}L_{x}^{\infty})}
\lesssim& N^{-1},\label{equ:z-norm,lv1,H1,used}
\end{align}
where the last two decay estimates \eqref{equ:z-norm,lv1,used}--\eqref{equ:z-norm,lv1,H1,used} follow from estimate \eqref{equ:fr,Z-norm,L1}.

We now proceed to the analysis of the bilinear collision terms. Applying the time-independent, weighted $Z$-norm collision estimates \eqref{equ:Z,norm,bilinear,estimate,time-independent}--\eqref{equ:Z,norm,bilinear,estimate,time-independent,gZ} from Proposition \ref{lemma:Z,norm,bilinear,estimate} yields
\begin{align*}
\n{e^{\lra{v}^{2}}Q^{-}(f_{\mathrm{b}},f_{\mathrm{r}})}_{Z}\lesssim& \n{\lra{v}e^{\lra{v}^{2}}f_{\mathrm{b}}}_{Z}\lrs{\n{\lra{v}e^{\lra{v}^{2}}f_{\mathrm{r}}}_{L_{v}^{1,1}L_{x}^{\infty}}
+M^{-1}\n{\lra{v}e^{\lra{v}^{2}}\nabla_{x}f_{\mathrm{r}}}_{L_{v}^{1,1}L_{x}^{\infty}}},\\
\n{e^{\lra{v}^{2}}Q^{+}(f_{\mathrm{b}},f_{\mathrm{r}})}_{Z}\lesssim & \n{\lra{v}e^{\lra{v}^{2}}f_{\mathrm{b}}}_{Z}\lrs{\n{\lra{v}e^{\lra{v}^{2}}f_{\mathrm{r}}}_{L_{v}^{1,1}L_{x}^{\infty}}
+M^{-1}\n{\lra{v}e^{\lra{v}^{2}}\nabla_{x}f_{\mathrm{r}}}_{L_{v}^{1,1}L_{x}^{\infty}}},\\
\n{e^{\lra{v}^{2}}Q^{+}(f_{\mathrm{r}},f_{\mathrm{b}})}_{Z}\lesssim & \n{\lra{v}e^{\lra{v}^{2}}f_{\mathrm{b}}}_{Z}\lrs{\n{\lra{v}e^{\lra{v}^{2}}f_{\mathrm{r}}}_{L_{v}^{1,1}L_{x}^{\infty}}
+M^{-1}\n{\lra{v}e^{\lra{v}^{2}}\nabla_{x}f_{\mathrm{r}}}_{L_{v}^{1,1}L_{x}^{\infty}}},\\
\n{e^{\lra{v}^{2}}Q^{\pm}(f_{\mathrm{r}},f_{\mathrm{r}})}_{Z}\lesssim & \n{\lra{v}e^{\lra{v}^{2}}f_{\mathrm{r}}}_{Z}\lrs{\n{\lra{v}e^{\lra{v}^{2}}f_{\mathrm{r}}}_{L_{v}^{1,1}L_{x}^{\infty}}
+M^{-1}\n{\lra{v}e^{\lra{v}^{2}}\nabla_{x}f_{\mathrm{r}}}_{L_{v}^{1,1}L_{x}^{\infty}}}.
\end{align*}
Although these estimates exhibit a loss of velocity weight $\lra{v}$, this extra factor can be removed due to the uniform compact support of the approximate solutions $f_{\mathrm{r}}$ and $f_{\mathrm{b}}$.

\begin{lemma}
For $0\leq  t_{0}\leq t \leq T_{*}$, we have
\begin{align*}
\bbn{e^{\lra{v}^{2}} \int_{t_{0}}^{t}e^{-(t-\tau)v\cdot \nabla_{x}}\lrs{Q^{+}(f_{\mathrm{r}},f_{\mathrm{b}})\pm Q^{\pm}(f_{\mathrm{b}},f_{\mathrm{r}})\pm Q^{\pm}(f_{\mathrm{r}},f_{
\mathrm{r}})}(\tau)d\tau}_{Z}\lesssim M^{-\delta}.
\end{align*}
\end{lemma}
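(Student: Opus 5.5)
The plan is to move the time integral outside the $Z$-norm and then rely on the time-independent bilinear estimates displayed just before the statement. Since $e^{-(t-\tau)v\cdot\nabla_x}$ is a measure-preserving shift in $x$ for each fixed $v$, it is an isometry on every $L_x^p$ and commutes with $\nabla_x$. Hence
\begin{align*}
\bbn{e^{\lra{v}^{2}}\int_{t_{0}}^{t}e^{-(t-\tau)v\cdot \nabla_{x}}F(\tau)d\tau}_{Z}\le T_{*}\sup_{\tau\in[0,T_{*}]}\n{e^{\lra{v}^{2}}F(\tau)}_{Z},
\end{align*}
where $F$ stands for any of $Q^{+}(f_{\mathrm{r}},f_{\mathrm{b}})$, $Q^{\pm}(f_{\mathrm{b}},f_{\mathrm{r}})$ or $Q^{\pm}(f_{\mathrm{r}},f_{\mathrm{r}})$. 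It therefore suffices to show $T_*\n{e^{\lra{v}^{2}}F}_{L^\infty_t Z}\lesssim M^{-\delta}$ for each of these five terms. Throughout, the extra weight $\lra{v}$ appearing in \eqref{equ:Z,norm,bilinear,estimate,time-independent}--\eqref{equ:Z,norm,bilinear,estimate,time-independent,gZ} is harmless: $f_{\mathrm{r}}$ and $f_{\mathrm{b}}$ are both supported in $|v|\le 5/4$, so $\lra{v}\lesssim 1$ there.

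Every one of these terms has at least one factor $f_{\mathrm{r}}$ placed in the slot measured only by $L_{v}^{1,1}L_{x}^{\infty}$ and $M^{-1}\nabla_x L_{v}^{1,1}L_{x}^{\infty}$. That is exactly where \eqref{equ:z-norm,lv1,used}--\eqref{equ:z-norm,lv1,H1,used} give the gain $N^{-1}=M^{-10}$. The slot assignment for each term is as follows.
\begin{itemize}
\item For $Q^{\pm}(f_{\mathrm{b}},f_{\mathrm{r}})$, I would use \eqref{equ:Z,norm,bilinear,estimate,time-independent} with $f=f_{\mathrm{b}}$ in the $Z$ slot and $g=f_{\mathrm{r}}$ in the $L^\infty_x$ slot.
\item For $Q^{+}(f_{\mathrm{r}},f_{\mathrm{b}})$, I would use \eqref{equ:Z,norm,bilinear,estimate,time-independent,gZ} with $f=f_{\mathrm{r}}$ in the $L^\infty_x$ slot and $g=f_{\mathrm{b}}$ in the $Z$ slot. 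This is why only the gain term appears with this ordering: $Q^{-}(f_{\mathrm{r}},f_{\mathrm{b}})$ was cancelled by construction of $f_{\mathrm{r}}$.
\item For $Q^{\pm}(f_{\mathrm{r}},f_{\mathrm{r}})$, I would use \eqref{equ:Z,norm,bilinear,estimate,time-independent} with $f_{\mathrm{r}}$ in both slots.
\end{itemize}

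Inserting \eqref{equ:z-norm,fb,used}--\eqref{equ:z-norm,lv1,H1,used} gives
\begin{align*}
\n{e^{\lra{v}^{2}}Q^{+}(f_{\mathrm{r}},f_{\mathrm{b}})}_{Z}+\n{e^{\lra{v}^{2}}Q^{\pm}(f_{\mathrm{b}},f_{\mathrm{r}})}_{Z}\lesssim M^{1-s}N^{-1},\qquad \n{e^{\lra{v}^{2}}Q^{\pm}(f_{\mathrm{r}},f_{\mathrm{r}})}_{Z}\lesssim N^{-1}.
\end{align*}
Multiplying by $T_*=M^{s-1}\ln\ln M$ then yields the bound $N^{-1}\ln\ln M\lesssim M^{-9}$, which is far better than the required $M^{-\delta}$.

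The only step needing care is confirming that the $f_{\mathrm{r}}$ bounds \eqref{equ:z-norm,lv1,used}--\eqref{equ:z-norm,lv1,H1,used} hold with the weight $e^{\lra{v}^2}$ and the derivative. This follows from \eqref{equ:fr,Z-norm,L1}, using $\n{\chi(Nv)}_{L^{1,1}_v}\lesssim N^{-3}$ against the prefactor $N^{3/2}$. The growth factors $\lra{tM^{1-s}}^k e^{-tM^{1-s}}$ coming from derivatives of the exponential are bounded uniformly in $t$, so the leftover power of $M$ is at most about $M^{3/2}$, and it is absorbed by $N^{-1/2}=M^{-5}$. I do not expect a genuine obstacle here; the difficult terms $Q^{\pm}(f_{\mathrm{b}},f_{\mathrm{b}})$ are deliberately excluded from this lemma.
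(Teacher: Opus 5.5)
Your proposal is correct and matches the paper's proof: move the time integral outside at cost $T_*$, apply the time-independent estimates \eqref{equ:Z,norm,bilinear,estimate,time-independent}--\eqref{equ:Z,norm,bilinear,estimate,time-independent,gZ} with $f_{\mathrm{r}}$ always in the $L_v^{1,1}L_x^\infty$ slot to obtain the $N^{-1}$ gain, and conclude with $T_*M^{1-s}N^{-1}\lesssim(\ln\ln M)N^{-1}$. Your explicit slot assignment for each term, in particular using \eqref{equ:Z,norm,bilinear,estimate,time-independent,gZ} for $Q^{+}(f_{\mathrm{r}},f_{\mathrm{b}})$, is left implicit in the paper but is exactly what its displayed bounds encode.
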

\begin{proof}
For any triple $(\mathrm{sgn}, i, j)$ satisfying
$$(\mathrm{sgn}, i, j)\in\lr{(+,r,b),(\pm,b,r),(\pm,r,r)},$$
using the bounds \eqref{equ:z-norm,fb,used}--\eqref{equ:z-norm,lv1,H1,used}, we obtain
\begin{align*}
&\bbn{e^{\lra{v}^{2}} \int_{t_{0}}^{t}e^{-(t-\tau)v\cdot \nabla_{x}}Q^{sgn}(f_{i},f_{j})(\tau)\,d\tau}_{Z}\\
\lesssim& T_{*}\n{e^{\lra{v}^{2}}Q^{sgn}(f_{i},f_{j})}_{L_{t}^{\infty}(0,T_{*};Z)}\\
\lesssim& T_{*}\lrs{\n{\lra{v}e^{\lra{v}^{2}}f_{\mathrm{r}}}_{Z}+\n{\lra{v}e^{\lra{v}^{2}}f_{\mathrm{b}}}_{Z}}
\lrs{\n{\lra{v}e^{\lra{v}^{2}}
f_{\mathrm{r}}}_{L_{v}^{1,1}L_{x}^{\infty}}
+M^{-1}\n{\lra{v}e^{\lra{v}^{2}}\nabla_{x}f_{\mathrm{r}}}_{L_{v}^{1,1}L_{x}^{\infty}}}\\
\lesssim& T_{*}M^{1-s} N^{-1}\lesssim (\ln\ln M)N^{-1},
\end{align*}
where in the last inequality we have inserted in $T_{*}=M^{s-1}(\ln\ln M)$. Given that $N=M^{10}$, this estimate is sufficient for our purpose.

\end{proof}

\subsection{Analysis of $Q^{-}(f_{\mathrm{b}},f_{\mathrm{b}})$} \label{section:Analysis of Q-}
We get into the analysis of the $Z$-norm estimate for the loss term $Q^{-}(f_{\mathrm{b}},f_{\mathrm{b}})$. It suffices to consider the
$M^{s_{0}}\Vert e^{\lra{v}^{2}} \cdot \Vert_{L_{v}^{2,1}L_{x}^{2}}$ and $\Vert e^{\lra{v}^{2}} \cdot \Vert _{L_{v}^{1,1}L_{x}^{\infty}}$ estimates.
The estimates involving the $x$-derivative terms, i.e., $M^{s_{0}-1}\n{\nabla_{x}f}_{L_{v}^{2,1}L_{x}^{2}}$ and $M^{-1}\n{\nabla_{x}f}_{L_{v}^{1,1}L_{x}^{\infty}}$, follow from an analogous argument.
 This is due to the fact that each spatial derivative $\nabla_{x}$ contributes an additional factor of $M$, which is already compensated by the factor
$M^{-1}$ present in the corresponding norms.

\begin{lemma}
For $0\leq  t_{0}\leq t \leq T_{*}$, we have
	\begin{align}\label{equ:Z norm,Q-,fb,fb}
		\bbn{ e^{\lra{v}^{2}}\int_{t_{0}}^{t}e^{-(t-\tau)v\cdot \nabla _{x}}Q^{-}(f_{\mathrm{b}},f_{\mathrm{b}})(\tau)\,d\tau} _{Z}\lesssim M^{-\delta}.
	\end{align}
\end{lemma}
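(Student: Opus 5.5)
The plan is to exploit the pointwise (multiplicative) structure of the loss term instead of the bilinear estimate of Proposition \ref{lemma:Z,norm,bilinear,estimate}. Since $Q^{-}(f_{\mathrm{b}},f_{\mathrm{b}})=f_{\mathrm{b}}A[f_{\mathrm{b}}]$ and $f_{\mathrm{b}}$ solves the free transport equation \eqref{equ:fb equation}, we have $f_{\mathrm{b}}(\tau,x-(t-\tau)v,v)=f_{\mathrm{b}}(t,x,v)$. Therefore
\begin{align*}
\int_{t_{0}}^{t}e^{-(t-\tau)v\cdot \nabla_{x}}Q^{-}(f_{\mathrm{b}},f_{\mathrm{b}})(\tau)\,d\tau
=f_{\mathrm{b}}(t,x,v)\int_{t_{0}}^{t}A[f_{\mathrm{b}}]\big(\tau,x-(t-\tau)v,v\big)\,d\tau .
\end{align*}
The whole task then reduces to a pointwise bound on the time integral of $A[f_{\mathrm{b}}]$ along the free trajectory, restricted to $v\in\operatorname{Supp}f_{\mathrm{b}}$, i.e. $|v|\in[\frac34,\frac54]$.

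\textbf{Step 1: decay of $A[f_{\mathrm{b}}]$ away from the origin.} Fix $\tau\le T_{*}\le\frac14$ and $|v|\lesssim1$. On the supports, $|u-v|\lesssim1$. By the upper bound in \eqref{equ:pointwise estimate on fb} and $\n{I_{i,j}}_{L^{1}}\lesssim M^{-2}$, we get $A[f_{\mathrm{b}}](\tau,y,v)\lesssim M^{1-s}M^{-2}\sum_{i,j}\wt K_{i,j}(y)$. Then \eqref{equ:upper bound,Kj,sum} gives
\begin{align*}
|\nabla_{y}^{k}A[f_{\mathrm{b}}](\tau,y,v)|\lesssim M^{1-s+k}\big(1+M|y|\big)^{-2},\qquad k=0,1 .
\end{align*}
This strengthens Lemma \ref{lemma:pointwise estimate,fb} to all $v$ in the compact velocity region, not only $|v|\lesssim N^{-1}$.

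\textbf{Step 2: the time integral along lines.} For $|v|\ge\frac34$, the point $y(\tau)=x-(t-\tau)v$ moves at unit speed. Writing $\sigma$ for arclength and $d$ for the distance of the line to the origin,
\begin{align*}
\int_{t_{0}}^{t}\big(1+M|y(\tau)|\big)^{-2}d\tau\lesssim\int_{\R}\frac{d\sigma}{1+M^{2}(d^{2}+\sigma^{2})}\lesssim M^{-1}.
\end{align*}
Hence $\big|\nabla_{x}^{k}\int_{t_{0}}^{t}A[f_{\mathrm{b}}](\tau,x-(t-\tau)v,v)\,d\tau\big|\lesssim M^{k-s}$ uniformly in $x$, $t_{0}$, $t$ and $v$ in the support. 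This is the key gain: the naive bound would be $T_{*}M^{1-s}\sim\ln\ln M$, which does not decay, whereas the dispersive passage through the concentration region costs only time $\sim M^{-1}$.

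\textbf{Step 3: close in the $Z$-norm.} The weights $e^{\lra{v}^{2}}$ and $\lra{v}$ are bounded on the compact velocity support. By the Leibniz rule,
\begin{align*}
\n{D^{-}}_{Z}\lesssim M^{-s}\sup_{t\le T_{*}}\n{e^{\lra{v}^{2}}f_{\mathrm{b}}(t)}_{Z}\lesssim M^{-s}M^{1-s}=M^{1-2s},
\end{align*}
where $D^{-}$ is the time integral on the left of the display above and we used \eqref{equ:z-norm estimate for fb}. Each of the four sub-norms is handled the same way. When a derivative falls on the integrated $A$ it costs $M$, which is compensated by the $M^{-1}$ (respectively $M^{s_{0}-1}$) prefactor paired with the undifferentiated $f_{\mathrm{b}}$ norm. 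Since $s>\frac34>\frac12$, this gives \eqref{equ:Z norm,Q-,fb,fb} with $\delta=2s-1$.

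\textbf{Main obstacle.} The main step to verify carefully is Step 1. We need the decay $(1+M|y|)^{-2}$ uniformly in $\tau\in[0,\frac14]$, for velocities $v$ of size $\sim1$ (not just $|v|\lesssim N^{-1}$ as in Lemma \ref{lemma:pointwise estimate,fb}), and also for $\nabla_{y}A$. I would redo the counting argument of \eqref{equ:Kij,sum} for the enlarged tubes $\wt K_{i,j}$, which absorb the transport shift $u\tau$ with $|P^{\perp}_{e_{i,j}}u|\le\frac{1}{M}$.
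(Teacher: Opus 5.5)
Your proof is correct. Its core coincides with the paper's treatment of the $L_{v}^{1,1}L_{x}^{\infty}$ piece: the paper also bounds $A[f_{\mathrm{b}}]\lesssim M^{1-s}(1+M|y|)^{-2}$ via the enlarged tubes $\wt{K}_{i,j}$ and \eqref{equ:upper bound,Kj,sum}, then integrates along the free line to gain $M^{-1}$.

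Where you differ is in how the rest is organized. The paper never uses the exact identity $f_{\mathrm{b}}(\tau,x-(t-\tau)v,v)=f_{\mathrm{b}}(t,x,v)$. It dominates this factor by enlarged tubes and then crudely by $1_{\{\frac34\le|v|\le\frac54\}}$, which is all the $L_{v}^{1,1}L_{x}^{\infty}$ norm needs. For the $M^{s_{0}}L_{v}^{2,1}L_{x}^{2}$ piece it switches method: it pulls out a factor $T_{*}$ and applies the loss-term collision estimate with $\n{f_{\mathrm{b}}}_{L_{x}^{\infty}L_{v}^{2}}\n{f_{\mathrm{b}}}_{L_{x}^{2}L_{v}^{1}}$. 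This relies on the $L_{x}^{2}L_{v}^{1}$ gain \eqref{equ:sobolev norm,fb,L2L1} and yields $M^{s_{0}-s-\frac12}\ln\ln M$. The two derivative sub-norms are only asserted ``by analogy''.

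Your factorization $D^{-}=f_{\mathrm{b}}(t)\,\mathcal{A}$, with $\mathcal{A}(t,x,v)=\int_{t_{0}}^{t}A[f_{\mathrm{b}}](\tau,x-(t-\tau)v,v)\,d\tau$ and $\n{\nabla_{x}^{k}\mathcal{A}}_{L^{\infty}}\lesssim M^{k-s}$, turns all four sub-norms into a single Leibniz estimate $\n{e^{\lra{v}^{2}}D^{-}}_{Z}\lesssim M^{-s}\n{e^{\lra{v}^{2}}f_{\mathrm{b}}(t)}_{Z}\lesssim M^{1-2s}$. This buys three things:
\begin{itemize}
\item the derivative cases become explicit;
\item the collision estimates are avoided entirely;
\item the $L^{2}$ part gets a better exponent, $M^{s_{0}-2s}$.
\end{itemize}
The price is that it depends on the purely multiplicative structure of $Q^{-}$ and has no analogue for $Q^{+}(f_{\mathrm{b}},f_{\mathrm{b}})$. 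The paper's $L^{2}$ argument, by contrast, is the same generic ``pull out $T_{*}$'' scheme it reuses for the gain term and the other error terms.

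One harmless slip: $x-(t-\tau)v$ moves at speed $|v|\in[\frac34,\frac54]$, not unit speed. This changes nothing in the $M^{-1}$ bound.
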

\begin{proof}

\textbf{The $M^{s_{0}}\Vert e^{\lra{v}^{2}} \cdot \Vert_{L_{v}^{2,1}L_{x}^{2}}$ estimate.}

Due to the uniform compact support of $f_{\mathrm{b}}$, the Gaussian weight $e^{\lra{v}^{2}}$ and velocity weight $\lra{v}$ are uniformly bounded and can be discarded. Therefore, we have
\begin{equation}\label{equ:equ:Q-,fb,fb,pointwise,first}
	M^{s_{0}}\bbn{ e^{\lra{v}^{2}}\int_{t_{0}}^{t}e^{-(t-\tau)v\cdot \nabla _{x}}Q^{-}(f_{\mathrm{b}},f_{\mathrm{b}})(\tau)\,d\tau}_{L_{v}^{2,1}L_{x}^{2}}\lesssim T_{*}M^{s_{0}}\n{Q^{-}(f_{\mathrm{b}},f_{\mathrm{b}})}_{L_{t}^{\infty }(0,T_{*};L_{v}^{2}L_{x}^{2})}.
\end{equation}
Applying the collision estimate \eqref{equ:collision,estimate,Q-,Lr} and H\"{o}lder inequality, we obtain
\begin{align*}
M^{s_{0}}\n{Q^{-}(f_{\mathrm{b}},f_{\mathrm{b}})}_{L_{t}^{\infty }(0,T_{*};L_{x}^{2}L_{v}^{2})}
\lesssim& M^{s_{0}}\n{f_{\mathrm{b}}}_{L_{t}^{\infty }(0,T_{*};L_{x}^{\infty}L_{v}^{2,1})}
\n{f_{\mathrm{b}}}_{L_{t}^{\infty }(0,T_{*};L_{x}^{2}L_{v}^{1,1})}.
\end{align*}
Invoking Sobolev norm bounds \eqref{equ:sobolev norm,fb,LvpLxinf} and \eqref{equ:sobolev norm,fb,L2L1} for $f_{\mathrm{b}}$ in Lemma \ref{lemma:Hs,bounds on fb}, we arrive at
\begin{align*}
M^{s_{0}}\bbn{ e^{\lra{v}^{2}}\int_{t_{0}}^{t}e^{-(t-\tau)v\cdot \nabla _{x}}Q^{-}(f_{\mathrm{b}},f_{\mathrm{b}})(\tau)\,d\tau}_{L_{v}^{2}L_{x}^{2}}\lesssim& T_{*}M^{s_{0}}M^{1-s}M^{-\frac{1}{2}-s}\\
\lesssim& M^{-\frac{1}{2}+s_{0}-s}(\ln\ln M),
\end{align*}
which suffices for our goal with $s>s_{0}$.

\textbf{The $\Vert e^{\lra{v}^{2}} \cdot \Vert _{L_{v}^{1,1}L_{x}^{\infty}}$ estimate.}

For notational convenience, we define
\begin{equation*}
	D^{-}=\int_{t_{0}}^{t}e^{-(t-\tau)v\cdot \nabla
		_{x}}Q^{-}(f_{\mathrm{b}},f_{\mathrm{b}})(\tau)d\tau.
\end{equation*}
Using the pointwise estimates \eqref{equ:fb,pointwise,estimate,upper} and \eqref{equ:upper bound,Kj,sum} for $f_{\mathrm{b}}$, we get
\begin{align*}
f_{\mathrm{b}}(t)\lesssim& M^{1-s}\sum_{i,j}\wt{K}_{i,j}(x)I_{i,j}(v),\\
A[f_{\mathrm{b}}](t)\lesssim& M^{1-s}M^{-2}\sum_{i,j}\wt{K}_{i,j}(x)\lesssim M^{1-s}\lrs{\frac{1}{M|x|+1}}^{2},
\end{align*}
where $\wt{K}_{ij}(x)=\chi(\frac{MP_{e_{i,j}}^{\perp }x}{10})\chi (\frac{P_{e_{i,j}}x}{10})$.
Expanding $D^{-}$ gives that
\begin{align}\label{equ:D-,first,expansion}
D^{-}=&\int_{t_{0}}^{t}Q^{-}(f_{\mathrm{b}},f_{\mathrm{b}})(\tau,x-v(t-\tau),v)d\tau\\
\lesssim &\int_{t_{0}}^{t} \lrs{M^{1-s}}^{2} \sum_{i,j}\wt{K}_{i,j}(x-v(t-\tau))I_{i,j}(v)  \lrs{\frac{1}{
M|x-v(t-\tau)|+1}}^{2}d\tau.\notag
\end{align}
Using the summation estimate \eqref{equ:Iij,sum} for $I_{i,j}$, we have
\begin{align}\label{equ:D-,Kij,Iij}
\sum_{i,j}\wt{K}_{i,j}(x-v(t-\tau))I_{i,j}(v)\lesssim \sum_{i,j}I_{i,j}(v)\lesssim 1_{\lr{\frac{3}{4}\leq|v|\leq \frac{5}{4}}}(v):=I(v).
\end{align}
Putting \eqref{equ:D-,Kij,Iij} into \eqref{equ:D-,first,expansion} yields
\begin{align}\label{equ:D-,Iv,estimate}
D^{-}\lesssim M^{2-2s}I(v)
\int_{0}^{t}  \lrs{\frac{1}{
M|x-v(t-\tau)|+1}}^{2}d\tau.
\end{align}

It remains to estimate the time integral. Changing variables $\sigma = (t-\tau)$ and noting that $|v|\sim 1$, we obtain
\begin{align}\label{equ:time integral,D-}
	& I(v)\int_{0}^{t}  \lrs{\frac{1}{
		M|x-v(t-\tau)|+1}}^{2}d\tau\\
	\lesssim&I(v)\int_{0}^{T_{*}} \lrs{\frac{1}{|Mx-Mv\sigma|+1}}^{2}
d\sigma\notag\\
	\leq& \frac{I(v)}{M|v|}\int_{0}^{MT_{*}|v|}\lrs{\frac{1}{\babs{Mx-\sigma \frac{v}{|v|}}+1}}^{2}d\sigma \notag\\
	\lesssim& \frac{I(v)}{M},\notag
\end{align}
where in the last step we have used the inequalities
\begin{align*}
\bbabs{Mx-\sigma \frac{v}{|v|}}\geq |M|x|-\sigma|,\quad
\int_{\R} \frac{1}{\lra{M|x|-\sigma}^{2}}d\sigma\lesssim 1.
\end{align*}
Putting \eqref{equ:time integral,D-} into \eqref{equ:D-,Iv,estimate}, we arrive at
\begin{align}\label{equ:Q-,fb,fb,L1,final}
	\n{D^{-}}_{L_{v}^{1,1}L_{x}^{\infty}}\lesssim& M^{2-2s} \frac{\n{I(v)}_{L_{v}^{1,1}}}{M}
	\lesssim M^{1-2s} .
\end{align}
Since $s>\frac{1}{2}$, this term is bounded by a negative power of $M$.
\end{proof}

\subsection{Analysis of $Q^{+}(f_{\mathrm{b}},f_{\mathrm{b}})$} \label{section:Analysis of Q+}
We proceed to analyze the $Z$-norm estimate for the gain term $Q^{+}(f_{\mathrm{b}},f_{\mathrm{b}})$. As in the case of $Q^{-}(f_{\mathrm{b}},f_{\mathrm{b}})$,
our attention can be restricted to the estimates for
$M^{s_{0}}\Vert e^{\lra{v}^{2}} \cdot \Vert_{L_{v}^{2,1}L_{x}^{2}}$ and $\Vert e^{\lra{v}^{2}} \cdot \Vert _{L_{v}^{1,1}L_{x}^{\infty}}$.
The estimates involving spatial derivatives, specifically $M^{s_{0}-1}\n{e^{\lra{v}^{2}}\nabla_{x}\cdot}_{L_{v}^{2,1}L_{x}^{2}}$ and $M^{-1}\n{e^{\lra{v}^{2}}\nabla_{x}\cdot}_{L_{v}^{1,1}L_{x}^{\infty}}$, follow from a similar way, since each derivative $\nabla_{x}$ introduces an additional factor of
$M$, which is precisely compensated by the factor
$M^{-1}$ in the corresponding norms.

\begin{lemma}
For $0\leq  t_{0}\leq t \leq T_{*}$, we have
	\begin{align*}
		\bbn{e^{\lra{v}^{2}}\int_{t_{0}}^{t}e^{-(t-\tau)v\cdot \nabla _{x}}Q^{+}(f_{\mathrm{b}},f_{\mathrm{b}})(\tau)\,d\tau}_{Z}\lesssim M^{-\delta}.
	\end{align*}
\end{lemma}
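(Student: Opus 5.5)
We bound the gain term exactly as we bounded $Q^{-}(f_{\mathrm{b}},f_{\mathrm{b}})$, handling the two norms $M^{s_{0}}\n{e^{\lra{v}^{2}}\cdot}_{L_{v}^{2,1}L_{x}^{2}}$ and $\n{e^{\lra{v}^{2}}\cdot}_{L_{v}^{1,1}L_{x}^{\infty}}$ separately. The derivative norms follow because each $\nabla_{x}$ costs a factor $M$. The Gaussian weight is harmless. Energy conservation gives $|v|^{2}\le |v^{*}|^{2}+|u^{*}|^{2}$, so $Q^{+}(f_{\mathrm{b}},f_{\mathrm{b}})$ is supported in $|v|\le 2$, where $e^{\lra{v}^{2}}$ and $\lra{v}$ are bounded.

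\textbf{The $L_{v}^{2}L_{x}^{2}$ part.} Here we simply pull the time integral outside: the contribution is at most $T_{*}M^{s_{0}}\n{Q^{+}(f_{\mathrm{b}},f_{\mathrm{b}})}_{L_{t}^{\infty}L_{x}^{2}L_{v}^{2}}$. The $L^{r}$ collision estimate with $r=2$ (the $Q^{+}$ analogue of the $Q^{-}$ case) bounds this by $T_{*}M^{s_{0}}\n{f_{\mathrm{b}}}_{L_{x}^{\infty}L_{v}^{2,1}}\n{f_{\mathrm{b}}}_{L_{x}^{2}L_{v}^{1,1}}$, with the $L_{x}^{\infty}$ factor placed on either argument. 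Lemma \ref{lemma:Hs,bounds on fb} then gives
\[
T_{*}M^{s_{0}}M^{1-s}M^{-\frac{1}{2}-s}=M^{s_{0}-s-\frac{1}{2}}\ln\ln M .
\]
This decays since $s>s_{0}$.

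\textbf{The $L_{v}^{1,1}L_{x}^{\infty}$ part.} This is the real obstacle: taking $T_{*}$ out here yields $M^{1-s}\ln\ln M$, so we must exploit the time integration. Write $D^{+}$ as the double sum over pairs of tubes from the outline and split it into diagonal pairs $(i_{1},j_{1})=(i_{2},j_{2})$ and off-diagonal pairs.

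For a diagonal pair, both inputs sit in velocity near $e_{i,j}$, so $|u-v|\lesssim M^{-1}$ and the kernel gives a factor $M^{-1}$. The velocity measure of each $I_{i,j}$ is about $M^{-2}$. We expect the resulting contribution to be about $M^{2-2s}\cdot M^{2}\cdot M^{-1}M^{-2}M^{-2}\cdot O(1)$, which is small.

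For an off-diagonal pair, $Q^{+}$ at a point $(x,v)$ needs pre-collision velocities $v^{*}\in \operatorname{Supp} I_{1}$ and $u^{*}\in \operatorname{Supp} I_{2}$ and positions on both tubes. Both tubes pass through a ball of radius $\sim M^{-1}$ around the origin; away from it they are separated by about $|x||e_{1}-e_{2}|$. Hence $Q^{+}(\cdot)(\tau,y,v)$ is supported in $|y|\lesssim (M|e_{1}-e_{2}|)^{-1}$, up to translations by $\tau v^{*}$, $\tau u^{*}$ which we control using the $e^{-(t-\tau)v\cdot\nabla_{x}}$ characteristic. Along the characteristic $y=x-v(t-\tau)$ with $|v|\gtrsim$ something, the time spent in this region is $\lesssim (M|e_{1}-e_{2}|)^{-1}/|v|$. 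The small-$|v|$ region we bound by its velocity measure instead.

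The magnitude of $Q^{+}$ per pair is bounded via angular decomposition of $\omega$: the constraint $v^{*}=v-[(v-u)\cdot\omega]\omega\in\operatorname{Supp}I_{1}$ confines $\omega$ to a set of measure $\lesssim M^{-1}$, and we take $L^{1}_{v}$ norms via the projection measure of $P^{\perp}_{e_{1}}P_{\omega}(u-v)$. Per pair this should give roughly
\[
M^{2-2s}\,M^{-4}\,\frac{1}{M|e_{1}-e_{2}|} .
\]
Summing with $\sum_{(i_{1},j_{1})\ne(i_{2},j_{2})}|e_{i_{1},j_{1}}-e_{i_{2},j_{2}}|^{-1}\lesssim M^{4}\ln M$ (a lattice count using \eqref{equ:gap estimate,eij} and packing in annuli $|e_{1}-e_{2}|\sim 2^{k}/M$) gives $M^{1-2s}\ln M$. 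This decays since $s>\frac34>\frac12$.

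The most delicate point is making the localization rigorous: showing that the gain term's output velocity $v$ is generically transverse enough that the characteristic actually exits the overlap region. We would handle the bad set of $v$ (those nearly parallel to both tubes) by its small measure, and we might need the slack from $s>\frac34$ there.
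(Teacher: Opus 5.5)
\textbf{Comparison with the paper.} Your treatment of the $M^{s_0}\|e^{\langle v\rangle^2}\cdot\|_{L^{2,1}_vL^2_x}$ part, of the derivative norms, and the diagonal/off-diagonal split are the same as the paper's. For off-diagonal pairs you use a different localization.

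The paper changes variables $(u^*,v^*)\mapsto(u,v)$ and keeps only the tube $K_{i_1,j_1}$. It bounds the time integral by $(M|P^{\perp}_{e_{i_1,j_1}}P_\omega(v-u)|)^{-1}=(M\sin\phi\,|\cos\theta|\,|v-u|)^{-1}$, uses $|v-u|\ge|e_{i_1,j_1}-e_{i_2,j_2}|/4$, and excises $\{\sin\phi\le M^{-1}\}\cup\{|\cos\theta|\le M^{-1/2}\}$. Because the factor $|\cos\theta|$ of the kernel is discarded, this costs $M^{1/2}$ and gives $M^{3/2-2s}\ln M$. That is the only place where $s>\frac34$ is needed.

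You use both tubes instead. Since $|P^\perp_{e}v^*|\le M^{-1}$ and $\tau\le T_*$, the product of the two transported tubes is supported, uniformly in $\tau$, where $y$ lies within $2/M$ of both lines $\mathbb{R}e_{i_1,j_1}$ and $\mathbb{R}e_{i_2,j_2}$. That region is $|y|\lesssim(M\sin\gamma)^{-1}$, with $\gamma$ the angle between the lines, and the characteristic spends time $\lesssim(M\sin\gamma\,|v|)^{-1}$ there.

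No bad set of $v$ and no slack from $s>\frac34$ is needed for small $|v|$. For fixed $(v^*,u^*)$ the output velocity runs over the sphere with diameter $[v^*,u^*]$. The measure $|(u^*-v^*)\cdot\omega|\,d\omega$ pushes forward to $r^{-1}$ times surface measure on that sphere, where $r=|u^*-v^*|/2$. Newton's shell theorem then gives $\int_{\mathbb{S}^2}|(u^*-v^*)\cdot\omega|\,|v(\omega)|^{-1}d\omega\le4\pi$. The per-pair bound is $M^{2-2s}M^{-4}(M\sin\gamma)^{-1}$, and the total is $M^{1-2s}\ln M$. This needs only $s>\frac12$ and is sharper than the paper's estimate.

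\textbf{The gap.} The step that fails as written is the separation claim. Two lines through the origin at angle $\gamma$ are $|x|\sin\gamma$ apart. Now $\sin\gamma\sim|e_{i_1,j_1}-e_{i_2,j_2}|$ only when $e_{i_1,j_1}\cdot e_{i_2,j_2}\ge0$; for obtuse pairs $\sin\gamma\sim|e_{i_1,j_1}+e_{i_2,j_2}|$.

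The lattice contains exactly antipodal pairs. For even $M$, $e_{i+M/2,M}=-e_{i,M}$ on the equator, and since $P_{-e}=P_{e}$ one has $K_{i+M/2,M}=K_{i,M}$. For such a pair the two spatial tubes coincide, so there is no localization to a small ball. Moreover, for $\omega$ near $\pm e_{i,M}$ the output velocity is almost parallel to the common axis while the kernel is $\approx2$. The characteristic can therefore stay in the overlap for the whole time interval, and your per-pair factor $(M|e_{i_1,j_1}-e_{i_2,j_2}|)^{-1}\approx(2M)^{-1}$ is unjustified for these pairs.

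The repair is to use $(M\sin\gamma)^{-1}$ throughout:
\begin{itemize}
\item The polar-angle lower bound from the proof of Lemma~\ref{lemma:eij,property}, applied to $e_{i_1,j_1}$ and $-e_{i_2,j_2}$, gives $|e_{i_1,j_1}+e_{i_2,j_2}|\ge(2M-j_1-j_2)/M$.
\item For $j_1=j_2=M$ one has $|e_{i_1,M}+e_{i_2,M}|=2|\cos(\pi(i_1-i_2)/M)|$.
\item Hence, apart from the $O(M)$ exactly antipodal pairs, $\sin\gamma\gtrsim M^{-1}$, and the same lattice counting gives $\sum(\sin\gamma)^{-1}\lesssim M^4\ln M$.
\item The antipodal pairs, with the trivial time bound $T_*$, contribute at most $M^{2-2s}\cdot M\cdot M^{-4}\cdot T_*\ll1$.
\end{itemize}
The paper's one-tube argument never meets this degeneracy, because $|v-u|$ is large for antipodal pairs. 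With this repair your route is a valid and sharper alternative.

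\textbf{Minor points.} In the diagonal case, $|u^*-v^*|\lesssim M^{-1}$ is false: $\operatorname{Supp}I_{i,j}$ has length $\frac15$ along $e_{i,j}$. You do not need that gain, since without it the diagonal contribution is $M^{2-2s}M^{2}M^{-4}=M^{-2s}$, as in the paper. Also, confining $\omega$ to a set of measure $M^{-1}$ plays no role. The factor $M^{-4}$ already comes from $\|I_{i_1,j_1}\|_{L^1}\|I_{i_2,j_2}\|_{L^1}$ after the collision change of variables, and an extra $M^{-1}$ on top would double count.
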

\begin{proof}

\textbf{The $M^{s_{0}}\Vert e^{\lra{v}^{2}} \cdot \Vert_{L_{v}^{2,1}L_{x}^{2}}$ estimate.}

In view of the energy conservation law, which implies
\begin{align}\label{equ:energy conservation,inequality}
\lra{v}\lesssim \lra{v^{*}}+\lra{u^{*}},\quad e^{\lra{v}^{2}}\lesssim e^{\lra{v^{*}}^{2}+\lra{u^{*}}^{2}},
\end{align}
we deduce
\begin{align}\label{equ:equ:Q+,fb,fb,pointwise,first}
	&M^{s_{0}}\bbn{ e^{\lra{v}^{2}}\int_{t_{0}}^{t}e^{-(t-\tau)v\cdot \nabla _{x}}Q^{+}(f_{\mathrm{b}},f_{\mathrm{b}})(\tau)\,d\tau}_{L_{v}^{2,1}L_{x}^{2}}\\
\lesssim& T_{*}M^{s_{0}}\n{Q^{+}(\lra{v}e^{\lra{v}^{2}}f_{\mathrm{b}},\lra{v}e^{\lra{v}^{2}}f_{\mathrm{b}})}_{L_{t}^{\infty }(0,T_{*};L_{v}^{2}L_{x}^{2})}\notag
\end{align}
Upon applying the collision estimate \eqref{equ:collision,estimate,Q+,Lr} and H\"{o}lder inequality, we obtain
\begin{align*}
&M^{s_{0}}\n{Q^{+}(\lra{v}e^{\lra{v}^{2}}f_{\mathrm{b}},\lra{v}e^{\lra{v}^{2}}f_{\mathrm{b}})}_{L_{t}^{\infty }(0,T_{*};L_{x}^{2}L_{v}^{2})}\\
\lesssim& M^{s_{0}}\n{\lra{v}e^{\lra{v}^{2}}f_{\mathrm{b}}}_{L_{t}^{\infty }(0,T_{*};L_{x}^{\infty}L_{v}^{2,1})}
\n{\lra{v}e^{\lra{v}^{2}}f_{\mathrm{b}}}_{L_{t}^{\infty }(0,T_{*};L_{x}^{2}L_{v}^{1,1})}\\
\lesssim& M^{s_{0}}\n{f_{\mathrm{b}}}_{L_{t}^{\infty }(0,T_{*};L_{x}^{\infty}L_{v}^{2})}
\n{f_{\mathrm{b}}}_{L_{t}^{\infty }(0,T_{*};L_{x}^{2}L_{v}^{1})},
\end{align*}
where in the last inequality we have utilized the uniform compact support of $f_{\mathrm{b}}$ to absorb the weight function $\lra{v}e^{\lra{v}^{2}}$.
Then, applying the Sobolev norm estimates \eqref{equ:sobolev norm,fb,LvpLxinf} and \eqref{equ:sobolev norm,fb,L2L1} for $f_{\mathrm{b}}$ from Lemma \ref{lemma:Hs,bounds on fb}, and substituting $T_{*}=M^{s-1}(\ln\ln M)$, we arrive at
\begin{align*}
M^{s_{0}}\bbn{ e^{\lra{v}^{2}}\int_{t_{0}}^{t}e^{-(t-\tau)v\cdot \nabla _{x}}Q^{+}(f_{\mathrm{b}},f_{\mathrm{b}})(\tau)\,d\tau}_{L_{v}^{2}L_{x}^{2}}\lesssim& T_{*}M^{s_{0}}M^{1-s}M^{-\frac{1}{2}-s}\\
\lesssim& M^{-\frac{1}{2}+s_{0}-s}(\ln\ln M),
\end{align*}
which is sufficient for our purposes given that $s>s_{0}$.

\textbf{The $\Vert e^{\lra{v}^{2}} \cdot \Vert _{L_{v}^{1,1}L_{x}^{\infty}}$ estimate.}

For brevity, we introduce the notation
\begin{equation*}
	D^{+}=\int_{t_{0}}^{t}e^{-(t-\tau)v\cdot \nabla
		_{x}}Q^{+}(f_{\mathrm{b}},f_{\mathrm{b}})(\tau)d\tau.
\end{equation*}
We recall that $f_{\mathrm{b}}$ takes the form
\begin{align}\label{equ:fb,D+}
f_{\mathrm{b}}(t,x,v)=&M^{1-s}\sum_{i,j}K_{i,j}(x-vt)I_{i,j}(v),
\end{align}
with
\begin{align*}
K_{i,j}(x)=\chi (MP_{e_{i,j}}^{\perp }x)\chi (P_{e_{i,j}}x),\quad I_{i,j}(v)=\chi(MP_{e_{i,j}}^{\perp }v)\chi (10P_{e_{i,j}}(v-e_{i,j})).
\end{align*}
The gain term is defined by
\begin{equation}\label{equ:Q+fg,D+}
Q^{+}(f,g)=\int_{\mathbb{S}^{2}}\int_{\mathbb{R}^{3}}B(u-v,\om)f(v^{*})g(u^{*})\,du\,d\omega,
\end{equation}
where $B(u-v,\omega)=|(u-v)\cdot \omega|$. The pre-collisional and post-collisional velocities are related by
\begin{equation*}
\left\{
\begin{aligned}
v^{*} =&P_{\omega}u+P_{\omega }^{\bot }v,\quad u^{*}=P_{\omega }v+P_{\omega }^{\bot }u, \\
v =&P_{\omega }^{\bot }v^{*}+P_{\omega }u^{*},\quad%
u=P_{\omega }v^{*}+P_{\omega }^{\bot }u^{*}.
\end{aligned}
\right.
\end{equation*}

Substituting \eqref{equ:fb,D+} and \eqref{equ:Q+fg,D+} into $D^{+}$, we get
\begin{align*}
D^{+} =&M^{2-2s}\sum_{i_{1},j_{1}}\sum_{i_{2},j_{2}}\int_{t_{0}}^{t}e^{-(t-\tau)v\cdot \nabla_{x}}Q^{+}(K_{i_{1},j_{1}}(x- v\tau )I_{i_{1},j_{1}}(v),K_{i_{2},j_{2}}(x-v\tau)I_{i_{2},j_{2}}(v))d\tau\\
=&M^{2-2s}\sum_{i_{1},j_{1}}\sum_{i_{2},j_{2}}\int_{
\mathbb{S}^{2}} \int_{\R^{3}} B(u-v,\omega) S_{i_{1},j_{1},i_{2},j_{2}}(t_{0},t,x,\omega,u^{*},v^{*})dud\omega,\notag
\end{align*}
where
\begin{align}\label{equ:Sij,t,x}
&S_{i_{1},j_{1},i_{2},j_{2}}(t_{0},t,x,\omega,u^{*},v^{*})\\
=&\int_{t_{0}}^{t}  K_{i_{1},j_{1}}(x-v(t-\tau)-v^{*}\tau)I_{i_{1},j_{1}}(v^{*}) K_{i_{2},j_{2}}(x-v(t-\tau)-u^{*}\tau)I_{i_{2},j_{2}}(u^{*})d\tau.\notag
\end{align}

We now proceed to establish the estimate for $D^{+}$.
Using the energy inequality \eqref{equ:energy conservation,inequality} and the uniform compact support in the $v^{*}$ and $u^{*}$ variable, we drop the weight function $\lra{v}e^{\lra{v}^{2}}$ to obtain
\begin{align*}
&\n{e^{\lra{v}^{2}}D^{+}}_{L_{v}^{1,1}L_{x}^{\infty }} \\
\lesssim & M^{2-2s}\bbn{\sum_{i_{1},j_{1}}\sum_{i_{2},j_{2}}\int_{
\mathbb{S}^{2}} \int_{\R^{3}} e^{\lra{v}^{2}}B(u-v,\omega)  S_{i_{1},j_{1},i_{2},j_{2}}(t_{0},t,x,\omega,u^{*},v^{*})dud\omega}_{L_{v}^{1,1}L_{x}^{\infty }}\notag \\
\lesssim &M^{2-2s}\sum_{i_{1},j_{1}}\sum_{i_{2},j_{2}}\int_{
\mathbb{S}^{2}} \int_{\R^{3}\times \R^{3}} B(u-v,\omega)\left\Vert S_{i_{1},j_{1},i_{2},j_{2}}(t_{0},t,x,\omega,u^{*},v^{*})\right\Vert_{L_{x}^{\infty }} du dv d\omega\notag
\end{align*}
Subsequently, applying the collision transform $(u^{*},v^{*})\mapsto (u,v)$ and utilizing the collision inequality
$$B(u^{*}-v^{*},\omega)=|(u^{*}-v^{*})\cdot \omega|\leq|u^{*}-v^{*}|=|u-v|,$$
we derive
\begin{align}\label{equ:estimate,D+}
&\n{e^{\lra{v}^{2}}D^{+}}_{L_{v}^{1,1}L_{x}^{\infty }}\\
\lesssim&M^{2-2s}\sum_{i_{1},j_{1}}\sum_{i_{2},j_{2}}\int_{
\mathbb{S}^{2}} \int_{\R^{3}\times \R^{3}} B(u^{*}-v^{*},\omega) \left\Vert S_{i_{1},j_{1},i_{2},j_{2}}(t_{0},t,x,\omega,u,v)\right\Vert_{L_{x}^{\infty }}du dv  d\omega,\notag\\
\lesssim &M^{2-2s}\sum_{i_{1},j_{1}}\sum_{i_{2},j_{2}}\int_{\R^{3}\times \R^{3}}
|u-v|\lrc{\int_{\mathbb{S}^{2}} \n{S_{i_{1},j_{1},i_{2},j_{2}}
(t_{0},t,x,\omega,u,v)}_{L_{x}^{\infty}}d\omega}dudv.\notag
\end{align}

We turn to the analysis of the interaction term $S_{i_{1},j_{1},i_{2},j_{2}}(t_{0},t,x,\omega,u,v)$.
Observing the geometric relations
\begin{equation*}
v^{*}-v=-P_{\omega}(v-u),\quad v^{*}-u=P_{\omega }^{\bot}(v-u),
\end{equation*}
we deduce
\begin{equation}\label{equ:Sij,x,v}
\left\{
\begin{aligned}
&x-v^{*}(t-\tau)-v\tau =x-v^{*}t-P_{\omega}(v-u)\tau,\\
&x-v^{*}(t-\tau)-u\tau =x-v^{*}t+P_{\omega}^{\bot}(v-u)\tau.
\end{aligned}
\right.
\end{equation}
Substituting \eqref{equ:Sij,x,v} into the definition \eqref{equ:Sij,t,x} of $S_{i_{1},j_{1},i_{2},j_{2}}(t_{0},t,x,\omega,u,v)$, we have
\begin{align}\label{equ:Sjk,estimate}
&S_{i_{1},j_{1},i_{2},j_{2}}(t_{0},t,x,\omega,u,v) \\
\lesssim &\int_{0}^{T_{*}}\chi \left( MP_{e_{i_{1},j_{1}}}^{\perp }(x-v^{*}t-P_{\omega}(v-u)\tau)\right)
\chi(P_{e_{i_{1},j_{1}}}(x-v^{*}t-P_{\omega }(v-u)\tau))
I_{i_{1},j_{1}}(v)\notag\\
&\chi\left( MP_{e_{i_{2},j_{2}}}^{\perp}(x-v^{*}t+P_{\omega}^{\bot}(v-u)\tau)\right) \chi (P_{e_{i_{2},j_{2}}}(x-v^{*}t+P_{\omega}^{\bot}(v-u)\tau)) I_{i_{2},j_{2}}(u)d\tau\notag\\
\leq &I_{i_{1},j_{1}}(v) I_{i_{2},j_{2}}(u)E_{i_{1},j_{1}}(t,x,\omega,u,v),\notag
\end{align}
where
\begin{align*}
E_{i_{1},j_{1}}(t,x,\omega,u,v)
=:&\int_{0}^{T_{*}}\chi \left( MP_{e_{i_{1},j_{1}}}^{\perp }(x-v^{*}t-P_{\omega}(v-u)\tau)\right) d\tau.
\end{align*}

We now distinguish two cases.

\textbf{Case $1$: $(i_{1},j_{1})\neq (i_{2},j_{2})$.}

We first analyse $E_{i_{1},j_{1}}(t,x,\omega,u,v)$.
First, it admits the trivial upper bound
\begin{align}\label{equ:E,trivial bound}
E_{i_{1},j_{1}}(t,x,\omega,u,v)\leq T_{*}\leq  1.
\end{align}
However, this trivial bound is insufficient to yield the desired decay estimate. Therefore, we seek a more refined estimate.

To proceed, we note the following time-integration estimate
\begin{align}\label{equ:chi,inequality}
\int_{\R}\chi(\overrightarrow{n}\tau+\overrightarrow{m})d\tau=
\frac{1}{|\overrightarrow{n}|}
\int_{\R}\chi\lrs{\frac{\overrightarrow{n}}{|\overrightarrow{n}|}\tau+\overrightarrow{m}}d\tau
\lesssim \frac{1}{|\overrightarrow{n}|},
\end{align}
This time-integration estimate effectively reduces the bound to a geometric constraint involving the projection directions.
By \eqref{equ:chi,inequality}, we arrive at
\begin{align}\label{equ:E,nontrivial bound}
E_{i_{1},j_{1}}(t,x,\omega,u,v)
=&\int_{0}^{T_{*}}\chi \left( MP_{e_{i_{1},j_{1}}}^{\perp }(x-v^{*}t-P_{\omega}(v-u)\tau)\right)d\tau\\
\lesssim& \frac{1}{M|P_{e_{i_{1},j_{1}}}^{\perp}P_{\omega
}(v-u)|}\notag\\
\lesssim&  \frac{1}{M\sin \phi_{i_{1},j_{1}}(\omega)}
\frac{1}{|P_{\omega
}(v-u)|}\notag\\
\lesssim & \frac{1}{M\sin \phi_{i_{1},j_{1}}(\omega)} \frac{1}{\cos \theta(v,u,\omega)|v-u|},\notag
\end{align}
where $\phi_{i_{1},j_{1}}(\omega)\in [0,\pi)$ is the angle between $\omega$ and $e_{i_{1},j_{1}}$, and $\theta(v,u,\omega)\in [0,\pi)$ be the angle between $\omega$ and $v-u$.

Next, we deal with the relative velocity term $|v-u|$. Indeed,
based on the localized properties induced by the support of $I_{i_{1},j_{1}}(v)I_{i_{2},j_{2}}(u)$, we have already established the lower bound in \eqref{equ:lower bound,v,u,eij} for the relative velocity
\begin{align}\label{equ:lower bound,relative velocity}
|v-u|
\geq&\frac{|e_{i_{1},j_{1}}-e_{i_{2},j_{2}}|}{4}.
\end{align}

Consequently, combining \eqref{equ:E,nontrivial bound} and \eqref{equ:lower bound,relative velocity}, we obtain a nontrivial upper bound
\begin{align}\label{equ:upper bound,Ek}
I_{i_{1},j_{1}}(v) I_{i_{2},j_{2}}(u)\n{E_{i_{1},j_{1}}(t,x,\omega,u,v)}_{L_{x}^{\infty}} \lesssim &\frac{I_{i_{1},j_{1}}(v) I_{i_{2},j_{2}}(u)}{M  \sin \phi_{i_{1},j_{1}}(\omega) \cos \theta(v,u,\omega) |e_{i_{1},j_{1}}-e_{i_{2},j_{2}}|}.
\end{align}

With the estimates for $E_{i_{1},j_{1}}$ established, we move on to analyze the angular integration. For this purpose, we partition the sphere $\mathbb{S}^{2}$ by introducing an angular decomposition. Specifically, we define
\begin{align*}
\Omega=\lr{\omega\in \mathbb{S}^{2}:\sin \phi_{i_{1},j_{1}}(\omega)\leq M^{-1}}\bigcup \lr{\omega\in \mathbb{S}^{2}:\abs{\cos \theta(v,u,\omega)}\leq M^{-\frac{1}{2}}},
\end{align*}
and denote by $\Omega^{c}$ the complementary set of $\Omega$. Accordingly, the integral can be split into two components
\begin{align*}
I_{i_{1},j_{1}}(v) I_{i_{2},j_{2}}(u)\int_{\mathbb{S}^{2}} \n{E_{i_{1},j_{1}}(t,x,\omega,u,v)}_{L_{x}^{\infty}}d\omega
\leq A_{1}+A_{2},
\end{align*}
where
\begin{align*}
A_{1}=&I_{i_{1},j_{1}}(v) I_{i_{2},j_{2}}(u)\int_{\Omega}\n{E_{i_{1},j_{1}}(t,x,\omega,u,v)}_{L_{x}^{\infty}}d\omega,\\
 A_{2}=&I_{i_{1},j_{1}}(v) I_{i_{2},j_{2}}(u)\int_{\Omega^{c}}\n{E_{i_{1},j_{1}}(t,x,\omega,u,v)}_{L_{x}^{\infty}}d\omega.
\end{align*}

We first estimate the term $A_{1}$. By applying the trivial bound \eqref{equ:E,trivial bound} on the set $\Omega$ and utilizing spherical coordinates, we obtain
\begin{align}\label{equ:Eij,A1,estimate}
&\int_{\Omega}\n{E_{i_{1},j_{1}}(t,x,\omega,u,v)}_{L_{x}^{\infty}}d\omega\\
 \leq& \int_{\lr{\omega\in \mathbb{S}^{2}:\sin \phi_{i_{1},j_{1}}(\omega)\leq M^{-1}}}1d\omega+\int_{\lr{\omega\in \mathbb{S}^{2}:\abs{\cos \theta(v,u,\omega)}\leq M^{-\frac{1}{2}}}}1d\omega\notag\\
\lesssim& \int_{0}^{\pi}1_{\lr{\sin \phi\leq M^{-1}}}\sin \phi d\phi+
\int_{0}^{\pi}1_{\lr{|\cos\theta|\leq M^{-\frac{1}{2}}}}\sin \theta d\theta\notag\\
\lesssim& M^{-2}+M^{-\frac{1}{2}}\lesssim M^{-\frac{1}{2}}.\notag
\end{align}

Next, we turn to the estimate for $A_{2}$. Applying the nontrivial upper bound \eqref{equ:upper bound,Ek} yields
\begin{align}
& I_{i_{1},j_{1}}(v)I_{i_{2},j_{2}}(u)
\int_{\Omega^{c}}\n{E_{i_{1},j_{1}}(t,x,\omega,u,v)}_{L_{x}^{\infty}}d\omega\notag\\
\leq&\frac{I_{i_{1},j_{1}}(v)I_{i_{2},j_{2}}(u)}{M|e_{i_{1},j_{1}}-e_{i_{2},j_{2}}|}\int_{\Omega^{c}} \frac{1}{\sin \phi_{i_{1},j_{1}}(\omega)\cos \theta(v,u,\omega)}d\omega\notag\\
\lesssim & \frac{I_{i_{1},j_{1}}(v)I_{i_{2},j_{2}}(u)}{M |e_{i_{1},j_{1}}-e_{i_{2},j_{2}}|}\lrc{\int_{\Omega^{c}}\frac{1}{(\sin \phi_{i_{1},j_{1}}(\omega))^{2}}d\omega+\int_{\Omega^{c}}\frac{1}{(\cos \theta(v,u,\omega))^{2}}d\omega}.\label{equ:estimate,Eij,Omegac,two terms}
\end{align}
For the last two terms on the right hand side of \eqref{equ:estimate,Eij,Omegac,two terms}, by surface integral formula, we have
\begin{align}\label{equ:Eij,A2,estimate,sin}
\int_{\Omega^{c}}\frac{1}{(\sin \phi_{i_{1},j_{1}}(\omega))^{2}}d\omega\lesssim &
\int_{0}^{\pi}1_{\lr{\sin \phi\geq M^{-1}}}\frac{\sin \phi}{(\sin \phi)^{2}}d\phi\\
\lesssim& \int_{M^{-1}}^{1}\frac{1}{\phi}d\phi \lesssim \ln M,\notag
\end{align}
and
\begin{align}\label{equ:Eij,A2,estimate,cos}
\int_{\Omega^{c}}\frac{1}{(\cos \theta(v,u,\omega))^{2}}d\omega\lesssim & \int_{0}^{\pi}1_{\lr{|\cos\theta|\geq M^{-\frac{1}{2}}}}\frac{\sin \theta}{(\cos \theta)^{2}} d\theta\\
\lesssim& \int_{M^{-\frac{1}{2}}}^{1}\frac{1}{\la^{2}}d\la\lesssim M^{\frac{1}{2}}.\notag
\end{align}

Putting together \eqref{equ:Sjk,estimate}, \eqref{equ:Eij,A1,estimate}, \eqref{equ:estimate,Eij,Omegac,two terms}, \eqref{equ:Eij,A2,estimate,sin}, \eqref{equ:Eij,A2,estimate,cos}, we arrive at
\begin{align}\label{equ:Sij,estimate,D+}
\int_{\mathbb{S}^{2}}\n{S_{i_{1},j_{1},i_{2},j_{2}}(t_{0},t,x,\omega,u,v)}_{L_{x}^{\infty}}d\omega \lesssim \frac{1}{M^{\frac{1}{2}}|e_{i_{1},j_{1}}-e_{i_{2},j_{2}}|}I_{i_{1},j_{1}}(v)I_{i_{2},j_{2}}(u).
\end{align}
Substituting \eqref{equ:Sij,estimate,D+} back into the estimate \eqref{equ:estimate,D+} for $D^{+}$, we have
\begin{align}\label{equ:D+,Lv1}
&\n{e^{\lra{v}^{2}}D^{+}}_{L_{v}^{1,1}L_{x}^{\infty}}\\
\lesssim & M^{2-2s}\sum_{(i_{1},j_{1})\neq (i_{2},j_{2})}\int_{\R^{3}\times \R^{3}} |u-v|\lrc{
\int_{\mathbb{S}^{2}} \n{S_{i_{1},j_{1},i_{2},j_{2}}(t_{0},t,x,\omega,u,v)}_{L_{x}^{\infty}}d\omega }du dv\notag\\
\lesssim & M^{2-2s}\sum_{(i_{1},j_{1})\neq (i_{2},j_{2})} \frac{1}{M^{\frac{1}{2}}|e_{i_{1},j_{1}}-e_{i_{2},j_{2}}|}
\int_{\R^{3}\times \R^{3}}
|u-v|I_{i_{1},j_{1}}(v)I_{i_{2},j_{2}}(u)du dv,\notag\\
\lesssim & M^{-\frac{5}{2}-2s}\sum_{(i_{1},j_{1})\neq (i_{2},j_{2})} \frac{1}{|e_{i_{1},j_{1}}-e_{i_{2},j_{2}}|},\notag
\end{align}
where in the last inequality we have used that
\begin{align}\label{equ:D+,Lv1,IjIk}
\int_{\R^{3}\times \R^{3}} |u-v|I_{i_{1},j_{1}}(v)I_{i_{2},j_{2}}(u)du dv
\lesssim&\n{I_{i_{1},j_{1}}}_{L_{v}^{1,1}}\n{I_{i_{2},j_{2}}}_{L_{v}^{1,1}}\leq M^{-4}.
\end{align}

It remains to handle the summation estimates over the lattice system. We decompose the sum into two cases based on the indices
\begin{align}\label{equ:sum,eij,twocases}
\sum_{(i_{1},j_{1})\neq (i_{2},j_{2})}   \frac{1}{|e_{i_{1},j_{1}}-e_{i_{2},j_{2}}|}
\leq &\sum_{j_{1}=j_{2},i_{1}\neq i_{2}}\frac{1}{|e_{i_{1},j_{1}}-e_{i_{2},j_{2}}|}+\sum_{j_{1}\neq j_{2},i_{1},i_{2}}\frac{1}{|e_{i_{1},j_{1}}-e_{i_{2},j_{2}}|}.
\end{align}
As established in \eqref{equ:gap estimate,eij,j1neqj2} and \eqref{equ:gap estimate,eij,j1=j2}, we have
\begin{align*}
|e_{i_{1},j_{1}}-e_{i_{2},j_{2}}|\geq &\frac{|j_{1}-j_{2}|}{M},\quad j_{1}\neq j_{2},\\
|e_{i_{1},j}-e_{i_{2},j}|\gtrsim& \frac{1}{M},\quad j_{1}=j_{2}=j,\ i_{1}\neq i_{2}.
\end{align*}
Consequently, for the first term in \eqref{equ:sum,eij,twocases}, we have
\begin{align}\label{equ:sum,eij,i1neqi2}
&\sum_{j=1}^{M}\sum_{j_{1}=j_{2}=j,i_{1}\neq i_{2}}
\frac{1}{|e_{i_{1},j_{1}}-e_{i_{2},j_{2}}|}
\lesssim \sum_{j=1}^{M}\sum_{j_{1}=j_{2}=j,i_{1}\neq i_{2}}
M\lesssim M^{4},
\end{align}
while for the second term in \eqref{equ:sum,eij,twocases}, we obtain
\begin{align}\label{equ:sum,eij,j1neqj2}
\sum_{j_{1}\neq j_{2},i_{1},i_{2}}\frac{1}{|e_{i_{1},j_{1}}-e_{i_{2},j_{2}}|}
\lesssim&\sum_{j_{1}\neq j_{2},i_{1},i_{2}}\frac{M}{|j_{1}-j_{2}|}\\
\lesssim& M^{3}\sum_{j_{1}=1}^{M}\sum_{j_{2}=1}^{M}\frac{1}{|j_{1}-j_{2}|+1}\notag\\
\lesssim& M^{4}\ln M.\notag
\end{align}

Putting \eqref{equ:sum,eij,twocases}, \eqref{equ:sum,eij,i1neqi2}, \eqref{equ:sum,eij,j1neqj2} into \eqref{equ:D+,Lv1}, we arrive at
\begin{align}\label{equ:D+,Lv1,case1}
\n{e^{\lra{v}^{2}}D^{+}}_{L_{v}^{1,1}L_{x}^{\infty}}\lesssim& M^{-\frac{5}{2}-2s} M^{4}\ln M\lesssim M^{\frac{3}{2}-2s}\ln M.
\end{align}

\textbf{Case $2$: $(i_{1},j_{1})=(i_{2},j_{2})$.}

In this diagonal case, the double summation
 $\sum_{i_{1},j_{1}}\sum_{i_{2},j_{2}}$ reduces to order $M^{2}$, gaining an
$M^{-2}$ decay compared to the non-diagonal case. Therefore, it suffices to apply the trivial bound \eqref{equ:E,trivial bound} to get
\begin{align*}
\int_{\mathbb{S}^{2}}\n{S_{i_{1},j_{1},i_{2},j_{2}}(t_{0},t,x,\omega,u,v)}_{L_{x}^{\infty}}d\omega\lesssim I_{i_{1},j_{1}}(v)I_{i_{2},j_{2}}(u).
\end{align*}
Together with the estimate \eqref{equ:estimate,D+} for $D^{+}$, it follows that
\begin{align}\label{equ:Q+,fb,fb,est2,final,case2}
&\n{e^{\lra{v}^{2}}D^{+}}_{L_{v}^{1,1}L_{x}^{\infty}}\\
\lesssim&  M^{2-2s}\sum_{(i_{1},j_{1})=(i_{2},j_{2})} \int_{\R^{3}\times \R^{3}} |u-v|\lrc{\int_{\mathbb{S}^{2}}
\n{S_{i_{1},j_{1},i_{2},j_{2}}(t_{0},t,x,\omega,u,v)}_{L_{x}^{\infty}}d\omega}du
dv\notag\\
\lesssim&  M^{2-2s}\sum_{(i_{1},j_{1})=(i_{2},j_{2})}
\int_{\R^{3}\times \R^{3}}
|u-v|I_{i_{1},j_{1}}(v)I_{i_{2},j_{2}}(u)dudv\notag\\
\lesssim&  M^{2-2s}\sum_{(i_{1},j_{1})=(i_{2},j_{2})}
\n{I_{i_{2},j_{2}}}_{L_{v}^{1,1}}\n{I_{i_{1},j_{1}}}_{L_{v}^{1,1}}\notag\\
\lesssim&  M^{2-2s} M^{2} M^{-4}\notag\\
\lesssim& M^{-2s} .\notag
\end{align}

Combining the estimates \eqref{equ:D+,Lv1,case1} and \eqref{equ:Q+,fb,fb,est2,final,case2} for the two cases, we finally arrive at
\begin{align*}
\n{e^{\lra{v}^{2}}D^{+}}_{L_{v}^{1,1}L_{x}^{\infty}}\lesssim M^{\frac{3}{2}-2s}\ln M,
\end{align*}
which is sufficient for our purpose provided that $s>\frac{3}{4}$.

\end{proof}

\section{Stability Analysis for the Approximate Solutions via an Iterative Scheme}\label{section:Stability Analysis for the Approximate Solutions via an Iterative Scheme}
In this section, we establish the stability of the constructed approximate solutions up to the critical time $T_{*}$, at which point the ill-posedness phenomenon manifests.
The main task is to provide uniform-in-time decay estimates for the correction terms $f_{\mathrm{c}}(t)$ and $g_{\mathrm{c}}(t)$,
which are governed by the following equations
\begin{equation}\label{equ:correction term,fc,stability}
\left\{
\begin{aligned} \partial_t f_{\mathrm{c}} + v\cdot \nabla_x f_{\mathrm{c}} = & \pm Q^\pm(f_{\mathrm{c}},f_{ \mathrm{a}}) \pm
Q^\pm(f_{\mathrm{a}},f_{\mathrm{c}}) \pm Q^\pm(f_{\mathrm{c}},f_{\mathrm{c}
})-F_{\mathrm{err}},\\
F_{\mathrm{err}}=&\pa_{t}f_{\mathrm{a}}+v\cdot \nabla_{x}f_{\mathrm{a}}+Q^{-}(f_{\mathrm{a}},f_{\mathrm{a}})-Q^{+}(f_{\mathrm{a}},f_{\mathrm{a}}),\\
f_{\mathrm{c}}(0)=&0,
 \end{aligned}
 \right.
\end{equation}
and
\begin{equation}\label{equ:correction term,gc}
\left\{
\begin{aligned} \partial_t g_{\mathrm{c}} + v\cdot \nabla_x g_{\mathrm{c}} = & \pm Q^\pm(g_{\mathrm{c}},g_{ \mathrm{a}}) \pm
Q^\pm(g_{\mathrm{a}},g_{\mathrm{c}}) \pm Q^\pm(g_{\mathrm{c}},g_{\mathrm{c}
})-G_{\mathrm{err}},\\
G_{\mathrm{err}}=&\pa_{t}g_{\mathrm{a}}+v\cdot \nabla_{x}g_{\mathrm{a}}+Q^{-}(g_{\mathrm{a}},g_{\mathrm{a}})-Q^{+}(g_{\mathrm{a}},g_{\mathrm{a}}),\\
g_{\mathrm{c}}(0)=&0.
 \end{aligned}
 \right.
\end{equation}
Here, the approximate solution $g_{\mathrm{a}}(t)$ is taken to be the stationary state $f_{\mathrm{r}}(0)$, that is,
$$g_{\mathrm{a}}(t)\equiv f_{\mathrm{r}}(0).$$
This choice ensures that the two exact solutions $f(t)$ and $g(t)$ are initially close in the strong weighted norm.
Moreover, the norm deflation property previously established for $f_{\mathrm{a}}(t)$, together with the uniform decay estimates for the correction terms $f_{\mathrm{c}}(t)$ and $g_{\mathrm{c}}(t)$, then guarantees that the two solutions are fully separated at the critical time $T_{*}$.

To ensure the uniform control of $f_{\mathrm{c}}(t)$ and $g_{\mathrm{c}}(t)$
over the interval $[0,T_{*}]$,
we employ an iterative scheme that relies on space-time collision estimates in the Gaussian-weighted $Z$-norm established in Section \ref{section:Space-time Collision}, the $Z$-norm estimate for $f_{\mathrm{a}}(t)$, and the uniform-in-time decay estimates for the error term $F_{\mathrm{err}}(t)$ in Section \ref{section:Estimates for Error Terms}.

\begin{proposition}\label{lemma:perturbation}
Let $T_{*}=M^{s-1}(\ln\ln M)$ and $\kappa=\frac{1}{T_{*}}$.
 Then, we have
\begin{align}
		\sup_{t\in[0,T_{*}]}\n{e^{(1-\kappa t)\lra{v}^{2}}f_{\mathrm{c}}(t)}_{Z}\lesssim& M^{-\frac{\delta}{2}},  \label{E:fc_bound2}\\
		\sup_{t\in[0,T_{*}]}\n{e^{(1-\kappa t)\lra{v}^{2}}g_{\mathrm{c}}(t)}_{Z}\lesssim& M^{-\frac{\delta}{2}}.  \label{E:gc_bound2}
	\end{align}
In particular, we have
\begin{align}
		\sup_{t\in[0,T_{*}]}\n{e^{(1-\kappa t)\lra{v}^{2}}f_{\mathrm{c}}(t)}_{L_{v}^{2} H_{x}^{s_{0}}}\lesssim& M^{-\frac{\delta}{2}},  \label{E:fc_bound2,sobolev}\\
		\sup_{t\in[0,T_{*}]}\n{e^{(1-\kappa t)\lra{v}^{2}}g_{\mathrm{c}}(t)}_{L_{v}^{2} H_{x}^{s_{0}}}\lesssim& M^{-\frac{\delta}{2}}.  \label{E:gc_bound2,sobolev}
	\end{align}
\end{proposition}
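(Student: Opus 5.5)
We write the correction equation \eqref{equ:correction term,fc,stability} in Duhamel form,
\begin{align*}
f_{\mathrm{c}}(t)=e^{-(t-t_{0})v\cdot\nabla_{x}}f_{\mathrm{c}}(t_{0})+\int_{t_{0}}^{t}e^{-(t-\tau)v\cdot\nabla_{x}}\lrc{\pm Q^{\pm}(f_{\mathrm{c}},f_{\mathrm{a}})\pm Q^{\pm}(f_{\mathrm{a}},f_{\mathrm{c}})\pm Q^{\pm}(f_{\mathrm{c}},f_{\mathrm{c}})-F_{\mathrm{err}}}(\tau)\,d\tau,
\end{align*}
and run a continuity/contraction argument on consecutive short intervals $[t_{k},t_{k+1}]\subset[0,T_{*}]$ in the norm $X_{k}:=\sup_{\tau\in[t_{k},t_{k+1}]}\n{e^{(1-\kappa\tau)\lra{v}^{2}}f_{\mathrm{c}}(\tau)}_{Z}$.

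The free transport commutes with the weight and preserves each piece of the $Z$-norm. The $L_{v}^{p}L_{x}^{q}$ norms with $v$-weights are invariant under $x\mapsto x-(t-t_{0})v$, and $e^{(1-\kappa t)\lra{v}^{2}}\le e^{(1-\kappa t_{0})\lra{v}^{2}}$. Hence the linear term contributes at most $\n{e^{(1-\kappa t_{k})\lra{v}^{2}}f_{\mathrm{c}}(t_{k})}_{Z}$.

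For the bilinear terms we apply Proposition \ref{lemma:Z,norm,bilinear,estimate} with $\kappa=T_{*}^{-1}$, so that $t\le 1/\kappa$ holds. The weight on $f_{\mathrm{a}}$ can be upgraded to $e^{\lra{v}^{2}}$, so Lemma \ref{lemma:z-norm bounds on,fb,fr,fa} gives $\n{e^{(1-\kappa\tau)\lra{v}^{2}}f_{\mathrm{a}}}_{Z}\lesssim M^{1-s}$. With step length $h=t_{k+1}-t_{k}$ this yields
\[
X_{k}\le C\n{e^{(1-\kappa t_{k})\lra{v}^{2}}f_{\mathrm{c}}(t_{k})}_{Z}+C(hT_{*})^{1/2}\lrs{M^{1-s}X_{k}+X_{k}^{2}}+CM^{-\delta}.
\]
The forcing term is bounded by Proposition \ref{lemma:bounds on ferr}, using $e^{(1-\kappa t)\lra{v}^{2}}\le e^{\lra{v}^{2}}$ outside the integral.

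Next we choose $h=c\,M^{2(s-1)}/T_{*}=c\,M^{s-1}(\ln\ln M)^{-1}$ with $c$ small, so that $C(hT_{*})^{1/2}M^{1-s}\le\frac12$. As long as $X_{k}\le1$, the quadratic term is also absorbed, and we obtain $X_{k}\le 2C\n{f_{\mathrm{c}}(t_{k})}+2CM^{-\delta}$. Existence on each step follows from the same estimates via a Picard iteration in the ball $X_{k}\le1$.

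The number of steps is $n=T_{*}/h\sim c^{-1}(\ln\ln M)^{2}$. Iterating the step bound gives
\[
\sup_{[0,T_{*}]}\n{e^{(1-\kappa t)\lra{v}^{2}}f_{\mathrm{c}}}_{Z}\lesssim (2C)^{n}M^{-\delta}=e^{O((\ln\ln M)^{2})}M^{-\delta}\le M^{-\delta/2}
\]
for $M$ large, since $(\ln\ln M)^{2}=o(\ln M)$. This bound is consistent with the bootstrap hypothesis $X_{k}\le1$, so the argument closes and proves \eqref{E:fc_bound2}.

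The main obstacle is exactly this counting. Without $\kappa$ the step would be $h\sim M^{2(s-1)}$, giving $n\sim M^{1-s}\ln\ln M$ steps and a growth $(2C)^{n}$ that destroys $M^{-\delta}$. The rate parameter reduces $n$ to a power of $\ln\ln M$, and the point to verify is that the step constants are uniform in $k$ and the weight shift. A cleaner alternative is to avoid restarting at each $t_{k}$: estimate on $[0,t]$ with Gronwall-type bookkeeping in the time-integrated form, which gives the same $e^{O(n)}$ loss.

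For $g_{\mathrm{c}}$ the argument is identical. Here $g_{\mathrm{a}}\equiv f_{\mathrm{r}}(0)$ has $Z$-norm $\lesssim1$ by Lemma \ref{lemma:z-norm bounds on,fb,fr,fa}, so the constraint on $h$ is even milder. Its error $G_{\mathrm{err}}=v\cdot\nabla_{x}f_{\mathrm{r}}(0)+Q^{-}(f_{\mathrm{r}}(0),f_{\mathrm{r}}(0))-Q^{+}(f_{\mathrm{r}}(0),f_{\mathrm{r}}(0))$ is controlled by the same bounds as in Sections \ref{section:Analysis of term1}--\ref{section:Analysis of term2}, using $|v|\lesssim N^{-1}$ and $N=M^{10}$, which gives $\lesssim M^{-\delta}$.

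Finally, \eqref{E:fc_bound2,sobolev}--\eqref{E:gc_bound2,sobolev} follow by interpolation:
\[
\n{h}_{L_{v}^{2}H_{x}^{s_{0}}}\lesssim\n{h}_{L_{v}^{2}L_{x}^{2}}^{1-s_{0}}\n{h}_{L_{v}^{2}H_{x}^{1}}^{s_{0}}\lesssim M^{s_{0}}\n{h}_{L_{v}^{2,1}L_{x}^{2}}+M^{s_{0}-1}\n{\nabla_{x}h}_{L_{v}^{2,1}L_{x}^{2}}\le\n{h}_{Z},
\]
where the middle inequality is Young's inequality applied to the two weighted pieces.
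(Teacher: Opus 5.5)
Your proposal is correct and follows the paper's proof. The paper also writes Duhamel on consecutive subintervals and applies the $\kappa$-weighted space-time bilinear estimate with $\kappa=T_*^{-1}$. It then absorbs the linear term using $\|e^{\langle v\rangle^2}f_{\mathrm{a}}\|_Z\lesssim M^{1-s}$, iterates against the $M^{-\delta}$ error bound, and interpolates to get the Sobolev norms. The only difference is the step size: the paper takes $|I_j|=M^{s-1}(\ln M)^{-1/2}$, which gives $n=\sqrt{\ln M}\,\ln\ln M$ steps. Your choice $h\sim M^{s-1}(\ln\ln M)^{-1}$ gives only about $(\ln\ln M)^2$ steps, and in both cases the exponential loss $e^{O(n)}$ is $M^{o(1)}$.
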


\begin{proof}
It suffices to deal with the $Z$-norm estimates \eqref{E:fc_bound2}--\eqref{E:gc_bound2}.
Indeed, the corresponding Sobolev norm estimates \eqref{E:fc_bound2,sobolev}--\eqref{E:gc_bound2,sobolev} are a direct consequence of a standard interpolation argument. Specifically, we have
\begin{align*}
\n{f}_{L_{v}^{2}H_{x}^{s_{0}}}\leq &\n{f}_{L_{v}^{2}L_{x}^{2}}^{1-s_{0}}\n{\nabla_{x}f}_{L_{v}^{2}L_{x}^{2}}^{s_{0}}\\
\leq&
\lrs{M^{s_{0}}\n{f}_{L_{v}^{2}L_{x}^{2}}}^{1-s_{0}} \lrs{M^{s_{0}-1}\n{\nabla_{x}f}_{L_{v}^{2}L_{x}^{2}}}^{s_{0}}\\
\leq&\n{f}_{Z}.
\end{align*}

We first prove the estimate \eqref{E:fc_bound2} for $f_{\mathrm{c}}(t)$ and the argument for $g_{\mathrm{c}}(t)$ is analogous and will be summarized at the end.
We partition the time interval $[0,T_{*}]$ as
	\begin{equation*}
		0=T_{0}<T_{1}<T_{2}<\cdots <T_{n-1}<T_{n}=T_{*}=M^{s-1}\ln\ln M
	\end{equation*}
	where
$$T_{j}= \frac{jM^{s-1}}{\sqrt{\ln M}},\quad n= \sqrt{\ln M} (\ln\ln M).$$
Consequently, the length of each subinterval
 $I_{j}=[T_{j},T_{j+1}]$ is given by
	\begin{equation*}
		|I_{j}|=  \frac{M^{s-1}}{\sqrt{\ln M}}.
	\end{equation*}

	For $t\in I_{j}=[T_{j},T_{j+1}]$, we express the solution $f_{\mathrm{c}}(t)$ to equation \eqref{equ:correction term,fc,stability} in its Duhamel formulation
\begin{align*}
f_{\mathrm{c}}(t)=e^{-(t-T_{j})v\cdot \nabla_{x}}f_{\mathrm{c}}(T_{j})+\int_{T_{j}}^{t}e^{-(t-\tau)v\cdot \nabla_{x}}F(\tau)d\tau,
\end{align*}
where the forcing term
$F$ is defined as
$$F=\pm Q^\pm(f_{\mathrm{c}},f_{ \mathrm{a}}) \pm
Q^\pm(f_{\mathrm{a}},f_{\mathrm{c}}) \pm Q^\pm(f_{\mathrm{c}},f_{\mathrm{c}
})-F_{\mathrm{err}}.$$

	Applying the
space-time collision estimates in the Gaussian-weighted $Z$-norm from Proposition \ref{lemma:Z,norm,bilinear,estimate}, we obtain
	\begin{align}\label{equ:fc,Z-norm,gauss}
		&\n{e^{(1-\kappa t)\lra{v}^{2}}f_{\mathrm{c}}}_{L_{I_{j}}^{\infty }Z}\\
 \leq& \n{e^{(1-\kappa T_{j})\lra{v}^{2}}f_{\mathrm{c}}(T_{j})}_{Z}
+\bbn{e^{(1-\kappa t)\lra{v}^{2}}\int_{T_{j}}^{t}e^{-(t-\tau)v\cdot \nabla_{x}}F(\tau)d\tau}_{L_{t}^{\infty}(I_{j};Z)}\notag \\
		 \lesssim & \n{e^{(1-\kappa T_{j})\lra{v}^{2}}f_{\mathrm{c}}(T_{j})}_{Z}+ \lrs{\frac{|I_{j}|}{\kappa}}^{\frac{1}{2}}
\n{e^{(1-\kappa t)\lra{v}^{2}}f_{\mathrm{a}}}_{L_{t}^{\infty}(I_{j};Z)}  \n{e^{(1-\kappa t)\lra{v}^{2}}f_{\mathrm{c}}}_{L_{t}^{\infty}(I_{j};Z)}\notag\\
&+\lrs{\frac{|I_{j}|}{\kappa}}^{\frac{1}{2}}\n{e^{(1-\kappa t)\lra{v}^{2}}f_{\mathrm{c}}}_{L_{t}^{\infty}(I_{j};Z)}^{2}
		 +\bbn{e^{(1-\kappa t)\lra{v}^{2}}\int_{T_{j}}^{t}e^{-(t-\tau)v\cdot \nabla_{x}}F_{\mathrm{err}}(\tau)d\tau}_{L_{t}^{\infty}(I_{j};Z)}.\notag
	\end{align}
In view of the parameter choice $\frac{1}{\kappa}=T_{*}=M^{s-1}\ln\ln M$, the coefficient in \eqref{equ:fc,Z-norm,gauss} is bounded by
\begin{align}\label{equ:Ij,kappa}
\lrs{\frac{|I_{j}|}{\kappa}}^{\frac{1}{2}}\lesssim \frac{M^{s-1}(\ln \ln M)^{\frac{1}{2}}}{(\ln M)^{\frac{1}{4}}} \lesssim \frac{M^{s-1}}{(\ln M)^{\frac{1}{10}}}.
\end{align}

Regarding the term involving $f_{\mathrm{a}}$, Lemma \ref{lemma:z-norm bounds on,fb,fr,fa} provides the $Z$-norm bound
\begin{align}\label{equ:z-norm estimate for fa,fc,proof}
\n{e^{\lra{v}^{2}}f_{\mathrm{a}}(t)}_{L_{t}^{\infty}(0,T_{*};Z)}\lesssim M^{1-s}.
\end{align}

For the error term $F_{\mathrm{err}}$, we employ the uniform-in-time decay estimate \eqref{equ:Ferr_bound} from Proposition \ref{lemma:bounds on ferr}, which gives
\begin{equation}\label{equ:Ferr_bound,fc,proof}
\bbn{e^{\lra{v}^{2}}\int_{t_{0}}^{t}e^{-(t-\tau)v\cdot \nabla _{x}}F_{\mathrm{err}
}(\tau)\,d\tau}_{Z}\lesssim M^{-\delta},\quad 0\leq t_{0}\leq t\leq T_{*}.
\end{equation}

Inserting the estimates \eqref{equ:Ij,kappa}, \eqref{equ:z-norm estimate for fa,fc,proof}, and \eqref{equ:Ferr_bound,fc,proof} into \eqref{equ:fc,Z-norm,gauss}, we obtain
	\begin{align*}
		\n{e^{(1-\kappa t)\lra{v}^{2}}f_{\mathrm{c}}}_{L_{t}^{\infty}(I_{j};Z)}\leq& \n{e^{(1-\kappa T_{j})\lra{v}^{2}}f_{\mathrm{c}}(T_{j})}_{Z}
+ \frac{C}{(\ln M)^{\frac{1}{10}}}\n{e^{(1-\kappa t)\lra{v}^{2}}f_{\mathrm{c}}}_{L_{t}^{\infty}(I_{j};Z)}\\
&+ \frac{CM^{s-1}}{(\ln M)^{\frac{1}{10}}}\n{e^{(1-\kappa t)\lra{v}^{2}} f_{\mathrm{c}}}_{L_{I_{j}}^{\infty
			}Z}^{2}+CM^{-\delta},
	\end{align*}
where $C$ denotes an absolute constant. By absorbing the linear term $\Vert f_{\mathrm{c}}\Vert
_{L_{I_{j}}^{\infty }Z}$ on the right-hand side, we deduce
\begin{equation*}
\n{e^{(1-\kappa t)\lra{v}^{2}}f_{\mathrm{c}}}_{L_{t}^{\infty}(I_{j};Z)}\leq 2\n{e^{(1-\kappa T_{j})\lra{v}^{2}}f_{\mathrm{c}
}(T_{j})}_{Z}+2CM^{-\delta}.
\end{equation*}

We now iterate this estimate from $j=0$ to $j=n-1$. Given the initial condition $f_{\mathrm{c}}(0)=0$, we have for $j=0$
\begin{equation*}
\n{e^{(1-\kappa t)\lra{v}^{2}}f_{\mathrm{c}}}_{L_{t}^{\infty}(I_{j};Z)}\leq 2CM^{-\delta}.
\end{equation*}
Proceeding inductively for $j=1,\ldots $, we obtain
\begin{align*}
\n{e^{(1-\kappa t)\lra{v}^{2}}f_{\mathrm{c}}}_{L_{t}^{\infty}(I_{j};Z)}\leq&(2^{j+1}-1)2CM^{-\delta}.
\end{align*}

Since $j+1=n=\sqrt{\ln M}(\ln\ln M)$, we conclude
\begin{equation*}
\sup_{t\in[0,T_{*}]}\Vert e^{(1-\kappa t)\lra{v}^{2}}f_{\mathrm{c}}(t)\Vert _{Z}\leq \frac{Ce^{\sqrt{\ln M}\ln \ln M}}{M^{\delta}} = \frac{Ce^{\sqrt{\ln M}\ln \ln M}}{e^{\delta \ln M}}\leq
M^{-\frac{\delta}{2}}\ll 1.
\end{equation*}

The proof for $g_{\mathrm{c}}(t)$ follows in an analogous manner. We express $g_{\mathrm{c}}(t)$ via the Duhamel formula
\begin{align*}
g_{\mathrm{c}}(t)=e^{-(t-T_{j})v\cdot \nabla_{x}}g_{\mathrm{c}}(T_{j})+\int_{T_{j}}^{t}e^{-(t-\tau)v\cdot \nabla_{x}}G(\tau)d\tau,
\end{align*}
where the forcing term
$G(t)$ is given by
\begin{equation*}
\left\{
\begin{aligned}
G=&\pm Q^\pm(g_{\mathrm{c}},g_{ \mathrm{a}}) \pm
Q^\pm(g_{\mathrm{a}},g_{\mathrm{c}}) \pm Q^\pm(g_{\mathrm{c}},g_{\mathrm{c}
})-G_{\mathrm{err}},\\
G_{\mathrm{err}}=&\pa_{t}g_{\mathrm{a}}+v\cdot \nabla_{x}g_{\mathrm{a}}+Q^{-}(g_{\mathrm{a}},g_{\mathrm{a}})-Q^{+}(g_{\mathrm{a}},g_{\mathrm{a}}).
\end{aligned}
\right.
\end{equation*}

Adopting the iterative scheme employed for $f_{\mathrm{c}}(t)$, it remains to establish the counterparts of the key estimates \eqref{equ:z-norm estimate for fa,fc,proof} for $g_{\mathrm{a}}(t)$ and \eqref{equ:Ferr_bound,fc,proof} for $G_{\mathrm{err}}(t)$. Observing that $g_{\mathrm{a}}(t)\equiv f_{\mathrm{r}}(0)$, we employ \eqref{equ:z-norm bounds for fr} from Lemma \ref{lemma:z-norm bounds on,fb,fr,fa} to obtain
\begin{align}\label{equ:z-norm estimate for ga,gc,proof}
\n{e^{\lra{v}^{2}}g_{\mathrm{a}}(t)}_{L_{t}^{\infty}(0,T_{*};Z)}=\n{e^{\lra{v}^{2}}f_{\mathrm{r}}(0)}_{Z}\lesssim 1\lesssim M^{1-s}.
\end{align}

For the error term $G_{\mathrm{err}}(t)$,  repeating the analysis of Sections \ref{section:Analysis of term1}--\ref{section:Analysis of term2} originally applied to the term $f_{\mathrm{r}}(t)$, yields, for $0\leq t_{0}\leq t\leq T_{*}$,
\begin{align}\label{equ:Ferr_bound,gc,proof}
&\bbn{e^{\lra{v}^{2}}\int_{t_{0}}^{t}e^{-(t-\tau)v\cdot \nabla _{x}}G_{\mathrm{err}
}(\tau)\,d\tau}_{Z}\\
=&\bbn{e^{\lra{v}^{2}}\int_{t_{0}}^{t}e^{-(t-\tau)v\cdot \nabla _{x}}\lrs{v\cdot \nabla_{x}g_{\mathrm{a}}+Q^{-}(g_{\mathrm{a}},g_{\mathrm{a}})-Q^{+}(g_{\mathrm{a}},g_{\mathrm{a}})(\tau)}d\tau}_{Z}\notag\\
=&\bbn{e^{\lra{v}^{2}}\int_{t_{0}}^{t}e^{-(t-\tau)v\cdot \nabla _{x}}\lrs{v\cdot \nabla_{x}f_{\mathrm{r}}(0)+Q^{-}(f_{\mathrm{r}}(0),f_{\mathrm{r}}(0))-Q^{+}(f_{\mathrm{r}}(0),f_{\mathrm{r}}(0))}d\tau}_{Z}\notag\\
\lesssim& M^{-\delta}.\notag
\end{align}
With \eqref{equ:z-norm estimate for ga,gc,proof} and \eqref{equ:Ferr_bound,gc,proof} in hand, the same iterative argument applied to $g_{\mathrm{c}}(t)$ leads to the desired estimate \eqref{E:gc_bound2}. This completes the proof.

\end{proof}

\section{Proof of Ill-posedness}\label{section:Proof of Ill-posedness}
We are now in a position to present the proof of the ill-posedness result stated in Theorem \ref{thm:main theorem}.
The strategy relies on constructing a family of smooth initial data pairs $(f(0),g(0))$ that are arbitrarily close, yet for which the corresponding solutions at a prescribed final time $T_{*}$
remain separated by a positive distance.
The proof is built upon three key estimates, the Sobolev norm estimates for $f_{\mathrm{b}}(t)$ in Lemma \ref{lemma:Hs,bounds on fb}, Sobolev norm estimates for $f_{\mathrm{r}}(t)$ from Lemma \ref{lemma:bounds on fr}, and uniform-in-time decay estimates for the correction terms $f_{\mathrm{c}}(t)$ and $g_{\mathrm{c}}(t)$ provided in Proposition \ref{lemma:perturbation}.

\begin{proof}[\textbf{Proof of ill-posedness in Theorem $\ref{thm:main theorem}$}]
We consider the pair of solutions constructed in the previous Sections \ref{sec:Bounds on the approximation solution}--\ref{section:Stability Analysis for the Approximate Solutions via an Iterative Scheme}. These solutions admit the following decompositions:
\begin{equation*}
\left\{
\begin{aligned}
f(t) &= f_{\mathrm{r}}(t) + f_{\mathrm{b}}(t) + f_{\mathrm{c}}(t), \quad f_{\mathrm{c}}(0)=0,\\
g(t) &= f_{\mathrm{r}}(0) + g_{\mathrm{c}}(t),\quad g_{\mathrm{c}}(0)=0.
\end{aligned}
\right.
\end{equation*}
Consequently, the difference between these two solutions can be explicitly expressed as
\begin{equation*}
f(t) - g(t) = f_{\mathrm{r}}(t) - f_{\mathrm{r}}(0) + f_{\mathrm{b}}(t) + f_{\mathrm{c}}(t) - g_{\mathrm{c}}(t).
\end{equation*}
By applying Lemma \ref{lemma:Hs,bounds on fb}, Lemma \ref{lemma:bounds on fr}, and Proposition \ref{lemma:perturbation}, we have
\begin{equation}\label{equ:fb,fr,fc,proof}
\left\{
\begin{aligned}
\sup_{t \in [0, T_{*}]} \n{e^{\lra{v}^{2}} f_{\mathrm{b}}(t)}_{L_{v}^{2} H_{x}^{s_{0}}} &\lesssim M^{s_{0}-s}\lesssim \frac{1}{\ln M}, \\
\n{f_{\mathrm{r}}(0)}_{L_{v}^{2} H_{x}^{s_{0}}} &\gtrsim 1,\\
\n{e^{\lra{v}^{2}} f_{\mathrm{r}}(T_{*})}_{L_{v}^{2} H_{x}^{s_{0}}} &\lesssim \frac{1}{\ln M}, \\
\sup_{t \in [0, T_{*}]} \n{e^{(1-\kappa t) \lra{v}^{2}} f_{\mathrm{c}}(t)}_{L_{v}^{2} H_{x}^{s_{0}}}  \lesssim& M^{-\frac{\delta}{2}}\lesssim \frac{1}{\ln M}, \\
\sup_{t \in [0, T_{*}]} \n{e^{(1-\kappa t) \lra{v}^{2}} g_{\mathrm{c}}(t)}_{L_{v}^{2} H_{x}^{s_{0}}}  \lesssim& M^{-\frac{\delta}{2}}\lesssim \frac{1}{\ln M},
\end{aligned}
\right.
\end{equation}
where the final time $T_{*}=M^{s-1}(\ln\ln M)$. Under the choice
$s<1$ from \eqref{equ:condition,s,s0}, this time scale becomes arbitrarily short in the limit, i.e.,
\begin{align*}
\lim_{M\to \infty}T_{*}=0.
\end{align*}

We first examine the difference at the initial time. Since $f_{\mathrm{c}}(0)=g_{\mathrm{c}}(0)=0$ by construction, the difference $f(0)-g(0)$ reduces simply to $f_{\mathrm{b}}(0)$. Applying the first estimate in \eqref{equ:fb,fr,fc,proof} at $t=0$, we obtain
\begin{equation*}
\n{e^{\lra{v}^{2}} \lrs{f(0) - g(0)}}_{L_{v}^{2} H_{x}^{s_{0}}}
= \n{e^{\lra{v}^{2}} f_{\mathrm{b}}(0)}_{L_{v}^{2} H_{x}^{s_{0}}} \lesssim \frac{1}{\ln M}.
\end{equation*}
 This confirms that the two initial data can be made arbitrarily close in the weighted $L_{v}^{2} H_{x}^{s_{0}}$ norm by choosing $M$ sufficiently large.

We now analyze the difference between the solutions at the prescribed final time $T_{*}$. By construction, the parameters satisfy $\kappa T_{*} = 1$, which implies that the exponential weight
becomes trivial at $t=T_{*}$, that is,
\begin{align}\label{equ:kappa,T,1}
e^{(1-\kappa T_{*})\lra{v}^{2}}=1.
\end{align}
Thus, at the final time $T_{*}$, the weighted norm coincides exactly with the standard unweighted $L_{v}^{2} H_{x}^{s_{0}}$ norm.
Starting from the identity
\begin{equation*}
f(T_{*}) - g(T_{*}) = f_{\mathrm{r}}(T_{*}) - f_{\mathrm{r}}(0) + f_{\mathrm{b}}(T_{*}) + f_{\mathrm{c}}(T_{*}) - g_{\mathrm{c}}(T_{*}),
\end{equation*}
we take the $L_{v}^{2}H_{x}^{s_{0}}$ norm, invoke \eqref{equ:kappa,T,1} at $t=T_{*}$, and apply the triangle inequality to deduce
\begin{align}\label{equ:proof,T,difference}
&\n{f(T_{*}) - g(T_{*})}_{L_{v}^{2} H_{x}^{s_{0}}}\\
=& \n{e^{(1-\kappa T_{*})\lra{v}^{2}} \lrs{f(T_{*}) - g(T_{*})}}_{L_{v}^{2} H_{x}^{s_{0}}} \notag\\
\ge& \n{e^{(1-\kappa T_{*})\lra{v}^{2}} f_{\mathrm{r}}(0)}_{L_{v}^{2} H_{x}^{s_{0}}}
- \n{e^{(1-\kappa T_{*})\lra{v}^{2}} f_{\mathrm{r}}(T_{*})}_{L_{v}^{2} H_{x}^{s_{0}}}
- \n{e^{(1-\kappa T_{*})\lra{v}^{2}} f_{\mathrm{b}}(T_{*})}_{L_{v}^{2} H_{x}^{s_{0}}} \notag\\
&\quad - \n{e^{(1-\kappa T_{*})\lra{v}^{2}} f_{\mathrm{c}}(T_{*})}_{L_{v}^{2} H_{x}^{s_{0}}}
- \n{e^{(1-\kappa T_{*})\lra{v}^{2}} g_{\mathrm{c}}(T_{*})}_{L_{v}^{2} H_{x}^{s_{0}}}.\notag
\end{align}
Substitute the bounds from \eqref{equ:fb,fr,fc,proof} into the above inequality \eqref{equ:proof,T,difference}, we arrive at the lower bound
\begin{align*}
\n{f(T_{*}) - g(T_{*})}_{L_{v}^{2} H_{x}^{s_{0}}}
\gtrsim& \n{f_{\mathrm{r}}(0)}_{L_{v}^{2} H_{x}^{s_{0}}} - \frac{1}{\ln M}
\gtrsim 1.
\end{align*}
This establishes the desired separation at time $T_{*}$, thereby completing the proof of the theorem.
\end{proof}

\noindent \textbf{Acknowledgements}
The authors would like to thank Professor Kenji Nakanishi and Professor Tong Yang for raising the problem.
S. Shen was supported in part by NSF of China under Grant 12501322.
Z. Zhang was supported in part by NSF of China under Grant 12288101.

\noindent\textbf{Data Availability Statement}
Data sharing is not applicable to this article as no datasets were generated or analysed during the current study.

\noindent\textbf{Conflict of Interest}
The authors declare that they have no conflict of interest.

\appendix
\section{Collision Estimates}
\begin{lemma}
For the hard-sphere kernel, it holds that
\begin{align}
\n{Q^{-}(f,g)}_{L_{v}^{r}}\lesssim& \n{f}_{L_{v}^{r,1}}\n{g}_{L_{v}^{1,1}},\quad \text{for $r\in[1,\infty]$},\label{equ:collision,estimate,Q-,Lr}\\
\n{Q^{+}(f,g)}_{L_{v}^{r}}\lesssim& \n{f}_{L_{v}^{r,1}}\n{g}_{L_{v}^{1,1}},\quad \text{for $r\in[1,\infty]$},\label{equ:collision,estimate,Q+,Lr}\\
\n{Q^{+}(f,g)}_{L_{v}^{r}}\lesssim& \n{f}_{L_{v}^{1,1}}\n{g}_{L_{v}^{r,1}},\quad \text{for $r\in[2,\infty]$}.\label{equ:collision,estimate,Q+,Lr,gr}
\end{align}
\end{lemma}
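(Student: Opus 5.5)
The proof rests on the hard-sphere bound $|(u-v)\cdot\omega|\le|u-v|\le\lra{u}\lra{v}$ together with elementary changes of variables. Each estimate of the lemma reduces to a weighted Young-type inequality in which $g$ contributes only through its first moment $\n{g}_{L_{v}^{1,1}}$.

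\textbf{Loss term.} After integrating out $\omega$ we have $Q^{-}(f,g)(v)=c\,f(v)\int|u-v|g(u)\,du$. Since
\begin{align*}
\int|u-v|\,|g(u)|\,du\le\lra{v}\n{g}_{L_{v}^{1,1}},
\end{align*}
the pointwise bound $|Q^{-}(f,g)(v)|\lesssim\lra{v}|f(v)|\,\n{g}_{L_{v}^{1,1}}$ follows. Taking the $L_{v}^{r}$ norm then gives \eqref{equ:collision,estimate,Q-,Lr} for every $r\in[1,\infty]$.

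\textbf{Gain term with $f$ in $L^{r}$.} I would use duality, which handles all $r$ at once. Pair $Q^{+}(f,g)$ with a test function $h\in L^{r'}$ and apply the pre/post-collisional change of variables $(v,u)\mapsto(v^{*},u^{*})$, which has unit Jacobian and preserves $|(u-v)\cdot\omega|$. This gives
\begin{align*}
\int Q^{+}(f,g)h\,dv=\iint\int_{\mathbb{S}^{2}}|(u-v)\cdot\omega|\,f(v)g(u)h(v^{*})\,d\omega\,du\,dv.
\end{align*}
Bounding the kernel by $\lra{v}\lra{u}$, it suffices to show that for each fixed $u$,
\begin{align*}
\int\int_{\mathbb{S}^{2}}|F(v)|\,|h(v^{*})|\,d\omega\,dv\lesssim\n{F}_{L^{r}}\n{h}_{L^{r'}},\qquad F=\lra{v}f.
\end{align*}
This is the standard $L^{p}$ boundedness of the gain operator with one velocity frozen. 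With $u$ fixed, for each $\omega$ the map $v\mapsto v^{*}=v-P_{\omega}(v-u)$ is the projection $P_{\omega}^{\perp}$ on the component of $v-u$ along $\omega$. The averaged operator $\int_{\mathbb{S}^{2}}h(P_{\omega}^{\perp}(v-u)+u)\,d\omega$ is then controlled by $h$ evaluated on a sphere through $u$ with diameter $v-u$. The Carleman representation, integrated against $F$, gives a bound by $\n{h}_{L^{r'}}\n{F}_{L^{r}}$. The endpoints $r=1$ and $r=\infty$ are direct, since the $\omega$-average is bounded in $L^{\infty}$ and the change of variables handles $L^{1}$, and interpolation fills in the intermediate range.

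\textbf{Gain term with $g$ in $L^{r}$, $r\ge2$.} The same duality and change of variables apply, now freezing $v$ and measuring $u\mapsto h(v^{*})$ with $v^{*}=P_{\omega}u+P_{\omega}^{\perp}v$. This is the map where the hard-sphere Jacobian degenerates: $v^{*}-v=P_{\omega}(u-v)$ lies on the line through $v$ along $\omega$. In Carleman form, $v^{*}$ ranges over $v+$ (a plane through $v$ orthogonal to $u-v^{*}$). The degeneracy of $\int_{\mathbb{S}^{2}}h(v+P_{\omega}(u-v))\,d\omega$, of order $|v^{*}-v|^{-2}$, must be absorbed by the kernel factor $|(u-v)\cdot\omega|=|v^{*}-v|$. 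The remaining singularity $|v^{*}-v|^{-1}$ is integrable against $h\in L^{r'}$ only when $r'\le2$, that is $r\ge2$.

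\textbf{Main obstacle.} I expect this last Jacobian computation to be the main difficulty, together with justifying the restriction $r\ge2$. I would do it through the Carleman representation and a weighted Hardy--Littlewood--Sobolev or Schur test. Everywhere the weights $\lra{u},\lra{v}$ come from $|u-v|\le\lra{u}\lra{v}$, which explains the $L^{p,1}$ norms on both factors.
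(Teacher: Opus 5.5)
Your loss-term argument is correct and is the paper's argument. For the gain term the paper gives no argument of its own; it only cites Alonso--Carneiro--Gamba and Mouhot--Villani. Your attempt has a genuine gap, and in fact both gain estimates are false for $r\ge 3$, so no argument can cover the stated ranges.

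First, the endpoint $r=\infty$ of your reduced inequality fails. With $u$ frozen and $\omega$ fixed, the map $v\mapsto v^{*}=u+P_{\omega}^{\perp}(v-u)$ is a projection, as you observe yourself. So there is no change of variables, and by Tonelli $\int_{\R^{3}}\int_{\mathbb{S}^{2}}|h(v^{*})|\,d\omega\,dv=\infty$ for every nonzero $h\ge 0$. Your interpolation therefore has no upper endpoint.

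Second, this reflects a failure of the statement itself. Writing $u-v=a\omega+b$ with $b\perp\omega$ gives the Carleman form
\begin{align*}
Q^{+}(f,g)(v)=2\int_{\R^{3}}\frac{f(v+z)}{|z|}\int_{z^{\perp}}g(v+b)\,dA(b)\,dz .
\end{align*}
For $f=1_{B(0,1)}$, $|v|\le\frac12$ and $g\ge 0$, this gives $Q^{+}(f,g)(v)\gtrsim\int_{\R^{3}}|w-v|^{-1}g(w)\,dw$. Take $g_{\epsilon}=|B(0,\epsilon)|^{-1}1_{B(0,\epsilon)}$. Then $Q^{+}(f,g_{\epsilon})(v)\gtrsim |v|^{-1}$ on $2\epsilon\le |v|\le\frac12$, so $\n{Q^{+}(f,g_{\epsilon})}_{L_{v}^{r}}\to\infty$ as $\epsilon\to 0$ for every $r\in[3,\infty]$. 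Meanwhile $\n{f}_{L_{v}^{r,1}}\n{g_{\epsilon}}_{L_{v}^{1,1}}\lesssim 1$.

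Moreover, for hard spheres $Q^{+}(f,g)=Q^{+}(g,f)$. In the $\sigma$-representation the kernel is a constant multiple of $|u-v|$, and $\sigma\mapsto-\sigma$ exchanges $v^{*}$ and $u^{*}$. So the third estimate is the second one with $f$ and $g$ swapped, and the same example refutes it for $r\ge 3$. In particular, your justification of the restriction $r\ge 2$ is backwards. The singular factor $|v^{*}-v|^{-1}$ is integrable against every $h\in L^{r'}$ exactly when $|z|^{-1}\in L^{r}_{\mathrm{loc}}(\R^{3})$, that is $r<3$ (equivalently $r'>\frac32$). This is an upper bound on $r$, not a lower one.

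What is true, and all the paper ever uses ($r=1,2$), is the range $r\in[1,3)$ for both gain estimates. Your duality reduction does give it, once you carry out the step you only assert (``the Carleman representation, integrated against $F$, gives a bound''). With $u=0$, the adjoint of $h\mapsto\int_{\mathbb{S}^{2}}h(P_{\omega}^{\perp}w)\,d\omega$ is $T^{*}\phi(z)=\frac{2}{|z|}\int_{z^{\perp}}\frac{\phi(z+y)}{|y|}\,dA(y)$, and you need $T^{*}$ bounded on $L^{r}$.

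For $r=1$ this is your $L^{\infty}$ bound on the $\omega$-average. For $1<r<3$, Schur's test with weight $|z|^{-a}$ closes. For the kernel $K$ of $T^{*}$ one computes:
\begin{align*}
\int K(z,y)|y|^{-ar'}\,dy=C|z|^{-ar'}\ \text{(finite iff } ar'>1),\qquad \int K(z,y)|z|^{-ar}\,dz=C|y|^{-ar}\ \text{(finite iff } ar<2).
\end{align*}
A choice $\frac{1}{r'}<a<\frac{2}{r}$ exists exactly when $r<3$.

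So the lemma's ranges should be corrected to $r\in[1,3)$ in both gain estimates. The paper's later arguments only invoke $r\in\{1,2\}$, so they are unaffected.
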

\begin{proof}
The estimate for the loss term \eqref{equ:collision,estimate,Q-,Lr} follows directly. Indeed, by bounding the relative velocity as $|u-v|\lesssim
\lra{u}\lra{v}$, we obtain
 \begin{align*}
\n{Q^{-}(f,g)}_{L_{v}^{r}}\lesssim& \bbn{f(v)\int_{\R^{3}}|u-v||g(u)|du}_{L_{v}^{r}}\\
\lesssim&\n{\lra{v}f(v)}_{L_{v}^{r}}\n{\lra{u}g(u)}_{L_{u}^{1}}.
 \end{align*}
 The gain term estimates \eqref{equ:collision,estimate,Q+,Lr}--\eqref{equ:collision,estimate,Q+,Lr,gr} follow from \cite[Theorem 1]{ACG10} or \cite[Theorem 2.1]{MV04}.
\end{proof}

\bibliographystyle{abbrv}
\bibliography{references}

\end{document}